\numberwithin{equation}{section}
\newtheorem{theorem}{Theorem}[section]
\newtheorem{proposition}[theorem]{Proposition}
\newtheorem{conjecture}[theorem]{Conjecture}
\newtheorem{corollary}[theorem]{Corollary}
\newtheorem{lemma}[theorem]{Lemma}
\theoremstyle{definition}
\newtheorem{remark}[theorem]{Remark}
\newtheorem{example}[theorem]{Example}
\newcommand{\Affine}{{\mathbb A}}
\newcommand{\End}{\operatorname{End}}
\newcommand{\Gr}{\operatorname{Gr}}
\newcommand{\im}{\operatorname{im}}
\newcommand{\Qsf}{\Q_{\,\rm sf}}
\newcommand{\Ext}{\operatorname{Ext}}
\newcommand{\Hom}{\operatorname{Hom}}
\newcommand{\C}{{\mathbb C}}
\newcommand{\Q}{{\mathbb Q}}
\newcommand{\Z}{{\mathbb Z}}
\newcommand{\GL}{\operatorname{GL}}
\newcommand{\rank}{\operatorname{rank}}
\newcommand{\PP}{{\bf P}}
\newcommand{\PPP}{{\mathbb P}}
\newcommand{\TT}{\mathbb{T}}
\newcommand{\bb}{\mathbf{b}}
\newcommand{\dd}{\mathbf{d}}
\newcommand{\e}{\mathbf{e}}
\newcommand{\g}{\mathbf{g}}
\newcommand{\h}{\mathbf{h}}
\newcommand{\yy}{\mathbf{y}}
\newcommand{\M}{{\mathcal M}}
\newcommand{\N}{{\mathcal N}}
\newcommand{\Pcal}{{\mathcal P}}
\newcommand{\proj}{{\rm proj}}
\newcommand{\inj}{{\rm inj}}
\newcommand{\Trop}{\operatorname{Trop}}
\newcommand{\Rep}{\operatorname{Rep}}
\renewcommand{\eqref}[1]{{\rm (\ref{#1})}}
\newcommand{\E}{{\mathcal E}}
\newcommand{\overunder}[2]{{\textstyle\frac{\ \ #1\ \ }{\ \ #2\ \ }}}
\newcommand{\twobyone}[2]{
\begin{bmatrix}#1\\#2
\end{bmatrix}
}
\begin{document}

\title[Quivers with potentials~II]
{Quivers with potentials and their representations~II: applications to cluster algebras}

\author{Harm Derksen, Jerzy Weyman and Andrei Zelevinsky}
%\author{Harm Derksen}
%\address{\noindent Department of Mathematics, University of Michigan,
%Ann Arbor, MI 48109, USA}
%\email{hderksen@umich.edu}

%\author{Jerzy Weyman}
%\address{\noindent Department of Mathematics, Northeastern University,
% Boston, MA 02115, USA}
%\email{j.weyman@neu.edu}

%\author{Andrei Zelevinsky}
%\address{\noindent Department of Mathematics, Northeastern University,
% Boston, MA 02115, USA}
%\email{andrei@neu.edu}

\subjclass[2000] {Primary 16G10, %representations of artinian rings
Secondary 16G20, %Representations of quivers and partially ordered sets),
16S38, %Rings arising from non-commutative algebraic geometry
16D90.
}

\begin{abstract}
We continue the study of quivers with potentials %(QP's)
and their representations initiated in the first paper of the series.
Here we develop some applications of this theory to cluster algebras.
As shown in the ``Cluster algebras~IV" paper, the cluster algebra structure is to a
large extent controlled by a family of integer vectors called
\emph{$\g$-vectors}, and a family of integer polynomials called \emph{$F$-polynomials}.
In the case of skew-symmetric exchange matrices
we find an interpretation of these $\g$-vectors and $F$-polynomials in terms of
(decorated) representations of quivers with potentials.
Using this interpretation, we prove most of the
conjectures about $\g$-vectors and $F$-polynomials made in loc. cit.
\end{abstract}

\date{April 3, 2009; revised November 13, 2009; final version March 23, 2010}

 \thanks{Research of H.~D. supported
 by the NSF grants DMS-0349019 and DMS-0901298.
Research of J.~W. supported
 by the NSF grant DMS-0600229.
Research of  A.~Z. supported by the
 NSF grants DMS-0500534 and DMS-0801187, and by a Humboldt Research Award.}

\maketitle

\tableofcontents{}

\section{Introduction}
\label{sec:intro}

This paper continues our study of quivers with potentials %(QP's)
and their representations initiated in \cite{dwz}.
Here we develop some applications of this theory to the theory of cluster algebras.
As shown in \cite{ca4}, the structure of cluster algebras is to a
large extent controlled by a family of integer vectors called
\emph{$\g$-vectors}, and a family of integer polynomials called \emph{$F$-polynomials}.
In the case of skew-symmetric exchange matrices (the terminology will be recalled later),
we find an interpretation of $\g$-vectors and $F$-polynomials in terms of
representations of quivers with potentials.
Using this interpretation, we prove most of the
conjectures about $\g$-vectors and $F$-polynomials made in \cite{ca4}.

Now we describe the main results of the paper in more detail.
%We start with a brief recall of $\g$-vectors and $F$-polynomials.
Fix a positive integer~$n$.
As in \cite{ca1} and \cite[Definition~2.8]{ca4}, we work with the \emph{$n$-regular tree}~$\TT_n$
whose edges are labeled by the numbers $1, \dots, n$,
so that the $n$ edges emanating from each vertex receive different labels.
We write $t \overunder{k}{} t'$
%\Harm{I redefined  \\``$\backslash$overunder'', so that it won't mess up the vertical spacing.}
to indicate that vertices $t, t' \in\TT_n$ are joined by an edge labeled by~$k$.
We also fix a vertex $t_0 \in \TT_n$ and a skew-symmetrizable
integer $n \times n$ matrix $B = (b_{i,j})$ (recall that this
means that $d_i b_{i,j} = - d_j b_{j,i}$ for some positive integers $d_1, \dots, d_n$).
We refer to~$B$ as the \emph{exchange matrix} at $t_0$.
To $t_0$ and $B$ we associate a family of integer vectors
$\g_{\ell;t} = \g_{\ell;t}^{B;t_0} \in \Z^n$
(\emph{$\g$-vectors}) and a family of integer polynomials $F_{\ell;t} =
F_{\ell;t}^{B;t_0} \in \Z[u_1,\dots,u_n]$ (\emph{$F$-polynomials}) in
$n$ independent variables $u_1, \dots, u_n$; here $\ell = 1, \dots, n$, and $t \in \TT_n$.
Both families can be defined via the recurrence relations on the tree $\TT_n$
given by \eqref{eq:gg-initial} -- \eqref{eq:deg-mut2} and
\eqref{eq:F-initial} -- \eqref{eq:F-mut2} below.

Now we state some conjectures from \cite{ca4}.

\begin{conjecture}
(\cite[Conjecture~5.4]{ca4})
\label{con:F-CT-1}
Each polynomial $F_{\ell;t}^{B;t_0}$ has constant term~$1$.
\end{conjecture}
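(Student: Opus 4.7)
The natural approach is to interpret each $F$-polynomial as the generating series of Euler characteristics of quiver Grassmannians for a suitable representation of a quiver with potential (QP). Under the standing assumption that $B$ is skew-symmetric, one builds from $B$ a quiver $Q$ on $n$ vertices, chooses a (generic enough) potential $S$, and for each $t \in \TT_n$ linked to $t_0$ by a path with edge labels $k_1, \ldots, k_m$ forms the mutated QP $(\bar Q_t, \bar S_t) = \mu_{k_m}\cdots\mu_{k_1}(Q,S)$. For each $\ell \in \{1,\ldots,n\}$ one then defines a decorated representation $\M_{\ell;t}$ of $(\bar Q_t, \bar S_t)$ by setting $\M_{\ell;t_0}$ to be pure decoration at vertex $\ell$ (so the underlying representation $M_{\ell;t_0}$ is $0$) and mutating it along the same path using the mutation of decorated representations developed in \cite{dwz}.

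The central identity to establish is
\[
F_{\ell;t}^{B;t_0}(u_1,\ldots,u_n) \;=\; \sum_{\e \in \Z_{\geq 0}^n} \chi\bigl(\Gr_\e(M_{\ell;t})\bigr)\,\prod_{i=1}^n u_i^{e_i},
\]
where $\Gr_\e(M)$ denotes the quiver Grassmannian of subrepresentations of $M$ of dimension vector $\e$. Once this is in hand, Conjecture \ref{con:F-CT-1} is immediate: the constant term picks out $\e = 0$, and the only subrepresentation of dimension vector $0$ is the zero subrepresentation, so $\chi(\Gr_0(M_{\ell;t})) = \chi(\text{pt}) = 1$.

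The identity is proved by induction on the distance $d(t_0,t)$ in $\TT_n$. The base case is trivial because $M_{\ell;t_0}=0$, so the right-hand side equals $1$, matching the initial condition \eqref{eq:F-initial}. For the induction step one must show that the generating function of Euler characteristics on the right satisfies the same mutation recurrence as $F_{\ell;t}^{B;t_0}$ under $t \overunder{k}{} t'$. This reduces to comparing $\Gr_\e(M_{\ell;t})$ and $\Gr_{\e'}(M_{\ell;t'})$ via the triangle of maps $\alpha_k, \beta_k, \gamma_k$ built into the mutation construction of \cite{dwz}, stratifying each quiver Grassmannian according to the ranks of the restrictions of these maps to the subrepresentation at vertex $k$, and using the multiplicativity of Euler characteristics along fiber bundles whose fibers are classical Grassmannians of subspaces.

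The main obstacle is this last stratification argument: one must match, stratum by stratum, the contributions on both sides of the mutation rule \eqref{eq:F-mut2} and exhibit the division by $F_{k;t}$ as the Euler-characteristic avatar of a fibration with fibers of predictable Euler characteristic. Setting up the relevant bijection between subrepresentations before and after mutation, and then performing the Euler-characteristic bookkeeping to match the numerator $\prod u_i^{[b_{i,k}]_+}\prod F_{i;t}^{[b_{i,k}]_+} + \prod u_i^{[-b_{i,k}]_+}\prod F_{i;t}^{[-b_{i,k}]_+}$, is the principal technical content of the proof; everything else is formal.
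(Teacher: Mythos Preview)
Your overall strategy---realizing each $F_{\ell;t}^{B;t_0}$ as the quiver-Grassmannian polynomial $F_M=\sum_\e \chi(\Gr_\e(M))\prod_i u_i^{e_i}$ of a suitable QP-representation and then reading off the constant term from $\Gr_0(M)=\{\text{pt}\}$---is exactly the paper's. But two points in your sketch diverge from the paper, and the first is a genuine error.

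Your construction runs the mutations in the wrong direction. You start from $\mathcal{S}_\ell^-$ at $t_0$ and mutate forward to $t$, obtaining a representation of $(\bar Q_t,\bar S_t)$. The paper does the opposite: one takes $\mathcal{S}_\ell^-$ over $(Q(t),S(t))$ and mutates \emph{back} to $t_0$, obtaining a representation $\M_{\ell;t}^{B;t_0}$ of the original $(Q(B),S)$; see \eqref{eq:neg-simple}, \eqref{eq:Mlt-mutation}, \eqref{eq:M-cluster-variable}. These are not interchangeable. In the $A_2$ example of Table~\ref{table:F-poly-A2}, your procedure applied to $\ell=1$ along $t_0 \overunder{2}{} t_1 \overunder{1}{} t_2$ leaves $\mathcal{S}_1^-$ fixed under $\mu_2$ and then turns it into the positive simple $\mathcal{S}_1$ under $\mu_1$, with $F$-polynomial $1+u_1$; but $F_{1;t_2}^{B;t_0}=u_1u_2+u_1+1$. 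The polynomial $F_{\ell;t}^{B;t_0}$ is written in the coordinates of the initial seed $t_0$, so the representation encoding it must live over $Q(B)$, not over the mutated quiver at $t$.

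Even with the direction fixed, the induction you outline is not the one carried out, and as stated it is not clear it can be made to work. You propose to verify the exchange recurrence \eqref{eq:F-mut2} directly for the quiver-Grassmannian polynomials; this would require a representation-theoretic interpretation of the principal-coefficient entries $b_{n+i,k}(t)$, together with control of products of several $F$-polynomials and a division. The paper avoids all of this by fixing $t$ and moving the \emph{initial} vertex instead: for $t_0 \overunder{k}{} t_1$ it proves the single-representation identity $(y_k+1)^{h_k}F_{\M}(y)=(y'_k+1)^{h'_k}F_{\overline\M}(y')$ of Lemma~\ref{lem:g-F-mutation} (equation \eqref{eq:F-M-Mbar}) and matches it against the known change-of-initial-seed relation \eqref{eq:F-t0-t1} for cluster $F$-polynomials. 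The stratification of $\Gr_\e(M)$ by $(r,s)=(\dim\alpha_k(N_{\rm in}(k)),\,\dim\beta_k^{-1}(N_{\rm out}(k)))$ that drives this (see \eqref{eq:F-M-thru-k}--\eqref{eq:Z=barZ}) is close in spirit to what you sketch, but the identity it yields involves only $\M$ and its mutation $\overline\M=\mu_k(\M)$, sidestepping the multiplicative structure of \eqref{eq:F-mut2} entirely.
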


In view of \cite[Proposition~5.3]{ca4}, Conjecture~\ref{con:F-CT-1}
is equivalent to the following.

\begin{conjecture}(\cite[Conjecture~5.5]{ca4})
\label{con:F-HT-1}
Each polynomial $F_{\ell;t}^{B;t_0}$ has a unique
monomial of maximal degree.
Furthermore, this monomial has coefficient~$1$, and it is
divisible by all the other occurring monomials.
\end{conjecture}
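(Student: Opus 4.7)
The plan is to interpret each $F$-polynomial as a generating function of Euler characteristics of Grassmannians of subrepresentations. To the initial data $(B, t_0)$ I would associate a quiver with potential $(Q(B), S)$ as in \cite{dwz}, and construct, for each pair $(\ell, t)$ with $\ell \in \{1, \dots, n\}$ and $t \in \TT_n$, a decorated representation $\M_{\ell;t}$ of this QP, with underlying ordinary module $M_{\ell;t}$ of dimension vector $\dd_{\ell;t} \in \Z_{\geq 0}^n$. The central identity to prove is
$$F_{\ell;t}^{B;t_0}(u_1, \dots, u_n) \;=\; \sum_{\e \in \Z_{\geq 0}^n} \chi\bigl(\Gr_{\e}(M_{\ell;t})\bigr)\, u^{\e},$$
where $\Gr_{\e}(M_{\ell;t})$ is the projective variety parametrizing subrepresentations of $M_{\ell;t}$ of dimension vector $\e$, and $\chi$ is the topological Euler characteristic.

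Granting this interpretation, both claims of Conjecture~\ref{con:F-HT-1} are immediate. Every subrepresentation of $M_{\ell;t}$ has dimension vector $\e$ satisfying $\e \le \dd_{\ell;t}$ componentwise, so every monomial $u^{\e}$ occurring in the sum divides $u^{\dd_{\ell;t}}$; in particular $u^{\dd_{\ell;t}}$ is the unique monomial of maximal total degree. Its coefficient is $\chi\bigl(\Gr_{\dd_{\ell;t}}(M_{\ell;t})\bigr) = 1$, because the only subrepresentation with the full dimension vector is $M_{\ell;t}$ itself, so the Grassmannian is a single reduced point of Euler characteristic $1$. (Incidentally, the same identity also yields Conjecture~\ref{con:F-CT-1}, since $\Gr_{0}(M_{\ell;t})$ is always a single point.)

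The main obstacle is establishing the generating-function identity in the first place. This breaks into three steps: (i)~defining the correct initial data, namely $\M_{\ell;t_0}$ should be the negative simple supported at $\ell$ so that $M_{\ell;t_0} = 0$ and the sum collapses to $1$, matching the initial condition $F_{\ell;t_0} = 1$; (ii)~proving that under any mutation $t \overunder{k}{} t'$, the family $\M_{\ell;t'}$ is obtained from $\M_{\ell;t}$ by the mutation of decorated representations developed in \cite{dwz}; and (iii)~showing that this representation-theoretic mutation transforms the Euler-characteristic generating function $\sum_{\e}\chi(\Gr_{\e}(M))\,u^{\e}$ in exact agreement with the exchange recurrence~\eqref{eq:F-mut2} for $F$-polynomials.

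The hard part is step~(iii). One must stratify $\Gr_{\e}(M_{\ell;t'})$ by how a subrepresentation interacts with the triangle of maps at vertex $k$ arising from QP-mutation, express each stratum as a fibration over a suitable submodule Grassmannian of $M_{\ell;t}$, take Euler characteristics, and verify that the resulting identity is precisely the polynomial exchange relation encoded by \eqref{eq:F-mut2} after the substitution of $u_k$ dictated by the mutation rule. Controlling these fibrations and the resulting cancellations is where the structural theory of QP-mutation from \cite{dwz}, together with standard multiplicativity of $\chi$ on fibrations, will do the heavy lifting; once the mutation step is proved, induction on the distance from $t_0$ in $\TT_n$ completes the interpretation and hence the conjecture.
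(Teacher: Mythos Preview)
Your proposal is correct and follows essentially the same route as the paper: Theorem~\ref{th:g-F-rep} establishes the identity $F_{\ell;t}^{B;t_0}=F_M$ via exactly the three-step induction you outline (negative simple initial data, QP-mutation along the path, and a stratification of $\Gr_\e(M)$ by the interaction with the triangle at~$k$ in Step~2 of Lemma~\ref{lem:g-F-mutation}), after which Proposition~\ref{pr:F-ct-ht} gives the conjecture for the reason you state. The only minor deviation is that the paper does not verify the recurrence~\eqref{eq:F-mut2} directly but instead proves the identity~\eqref{eq:F-M-Mbar} and matches it against the pre-packaged relation~\eqref{eq:F-t0-t1} from \cite{ca4}, which streamlines the bookkeeping.
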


\begin{conjecture} (\cite[Conjecture~6.13]{ca4})
\label{con:signs-gi}
For every~$t \in \TT_n$,
the vectors $\g_{1;t}^{B;t_0}, \dots, \g_{n;t}^{B;t_0}$ are
sign-coherent, i.e., for any $i = 1, \dots, n$, the $i$-th components of all
these vectors are either all nonnegative, or all nonpositive.
\end{conjecture}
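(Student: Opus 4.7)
The strategy is to realize $\g_{\ell;t}^{B;t_0}$ as the $g$-vector of a decorated representation of a quiver with potential (QP) built from $B$ at $t_0$, and then deduce sign-coherence from an intrinsic formula for that $g$-vector. Using Paper~I of this series I would associate to $B$ a quiver $Q = Q(B)$ together with a nondegenerate potential $S$. At $t_0$ I take the negative simple decorated representations $\M_{\ell;t_0} = (0, S_\ell)$ for $\ell = 1, \dots, n$. For every edge $t \overunder{k}{} t'$ of $\TT_n$ I transport the whole $n$-tuple $(\M_{1;t}, \dots, \M_{n;t})$ through the mutation $\mu_k$ of decorated QP-representations from Paper~I, producing $(\M_{1;t'}, \dots, \M_{n;t'})$.

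The first step is to show, by induction on the distance from $t_0$ in $\TT_n$, that the $g$-vector of $\M_{\ell;t}$ in the sense of Paper~I coincides with $\g_{\ell;t}^{B;t_0}$. This amounts to matching the Fomin--Zelevinsky mutation rule for $g$-vectors against the explicit effect of $\mu_k$ on the $g$-vector of a decorated representation, a structural identity one should prove once for a single decorated representation and then specialize. For a decorated representation $\M = (M, V)$, the $g$-vector at vertex $i$ admits a canonical decomposition
$$g_i(\M) \;=\; \dim V_i \;-\; r_i(M),$$
where $r_i(M) \geq 0$ depends only on the map structure of $M$ near vertex $i$, for instance as the dimension of the cokernel of an appropriate approximation map into $M_i$.

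Granted this identification, sign-coherence of $\g_{1;t}, \dots, \g_{n;t}$ reduces to the following dichotomy, which I would prove in parallel with the identification of $g$-vectors: at every $t \in \TT_n$ and every vertex $i$, either $V_i = 0$ in all of $\M_{1;t}, \dots, \M_{n;t}$, or $r_i = 0$ in all of them. The dichotomy holds trivially at $t_0$, where the decorations sit at pairwise distinct vertices and all $M$-parts vanish. Propagating it across a mutation $\mu_k$ is the heart of the argument: at vertex $k$ itself it is restored by the fact that $\mu_k$ swaps the roles of $V_k$ and $M_k$, while at a vertex $i \neq k$ one must track how the data computing $r_i$ is reshuffled by $\mu_k$. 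The main obstacle I anticipate is precisely this inductive step at $i \neq k$: a priori the mutation could create a new decoration at $i$ in one member of the tuple while simultaneously producing a nontrivial $r_i$ at $i$ in another. Ruling this out should rely on the absence of $2$-cycles in the mutated QP, guaranteed by nondegeneracy of $S$, together with a careful analysis of the exchange triangles at $k$ provided by the construction of $\mu_k$ in Paper~I.
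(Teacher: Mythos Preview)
Your setup---building $\M_{\ell;t}$ by mutating negative simples and identifying their $\g$-vectors with $\g_{\ell;t}^{B;t_0}$---is exactly Theorem~\ref{th:g-F-rep} of the paper. The genuine gap is the decomposition $g_i(\M) = \dim V_i - r_i(M)$ with $r_i(M) \geq 0$: no such $r_i$ exists. The paper's formula \eqref{eq:g-M} reads $g_i = \dim\ker\gamma_i - \dim M(i) + \dim V(i)$, and the first two terms need not combine to something nonpositive. Already at distance~$1$ from $t_0$ this fails: for $t_0 \overunder{k}{} t_1$, the representation $\M_{k;t_1}$ is the positive simple $\mathcal{S}_k$ with $V = 0$; at any vertex $i$ receiving an arrow from $k$ one has $\gamma_i = 0$ (since $S$ is reduced) and hence $g_i = \dim M_{\rm out}(i) = [-b_{i,k}]_+ > 0$. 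Thus $V_i = 0$ does not force $g_i \leq 0$, and the dichotomy you propose cannot deliver sign-coherence as stated. The ``main obstacle'' you anticipate at $i\neq k$ is therefore not the real issue; the formula you are inducting on is already wrong at the first step.

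The paper's argument is structurally different and does not attempt a direct induction on $t$. It introduces the scalar
\[
E(\M) = \langle M, M\rangle + \sum_k g_k(\M)\,d_k(M),
\]
proves it is mutation-invariant (Theorem~\ref{th:E-invariant}), and establishes the lower bound \eqref{eq:E-lower-bound-beta}. These are applied not to each $\M_{\ell;t}$ separately but to the \emph{direct sum} $\M = \bigoplus_{\ell=1}^n \M_{\ell;t}$, which is mutation-equivalent to a negative representation and hence has $E(\M)=0$. The lower bound then forces, at each vertex $k$, either $\ker\beta_k = 0$ or both $\ker\gamma_k = \im\beta_k$ and $V(k) = 0$. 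Via the identity $g_k = h_k - h'_k$ with $h_k = -\dim\ker\beta_k$ (Lemma~\ref{lem:g-F-mutation}), this becomes $\max(h_k, h'_k) = 0$; whichever of $h_k, h'_k$ vanishes for the direct sum automatically vanishes for every summand, which pins the sign of each $g_k(\M_{\ell;t})$. This direct-sum trick---converting a uniform statement about $n$ representations into the single scalar condition $E=0$---is the pivotal idea, and nothing analogous appears in your outline.
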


\begin{conjecture} (\cite[Conjecture~7.10(2)]{ca4})
\label{con:g-vector-Z-basis}
For every~$t \in \TT_n$, the vectors
$\g_{1;t}^{B;t_0}, \dots, \linebreak \g_{n;t}^{B;t_0}$
form a $\Z$-basis of the lattice~$\Z^n$.
\end{conjecture}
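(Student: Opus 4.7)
\emph{Proof proposal.} The plan is to proceed by induction on the distance from $t_0$ in the tree $\TT_n$, with the inductive step powered by the mutation rule for $\g$-vectors together with the sign-coherence property of Conjecture~\ref{con:signs-gi} (already established earlier in the paper).

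For the base case $t = t_0$, the initial conditions of the recurrence for $\g$-vectors make $\g_{1;t_0}, \dots, \g_{n;t_0}$ equal (up to sign) to the standard basis vectors $\e_1, \dots, \e_n$ of $\Z^n$, which obviously form a $\Z$-basis. For the inductive step, assume the result at $t$ and let $t \overunder{k}{} t'$. The key observation I would establish is that, under sign-coherence of the matrix $G_t := [\g_{1;t}, \dots, \g_{n;t}]$, the mutation recurrence collapses to a particularly simple transformation: namely $G_{t'}$ is obtained from $G_t$ by negating the $k$-th column and then adding integer multiples of this new $k$-th column to the remaining columns. Such a transformation has determinant $\pm 1$, so $|\det G_{t'}| = |\det G_t|$, and the inductive hypothesis $|\det G_t| = 1$ at once yields the $\Z$-basis property at $t'$.

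The main obstacle is verifying the explicit form of the mutation rule for $\g$-vectors in the sign-coherent case. Within the representation-theoretic framework of the paper, each $\g_{\ell;t}$ is the $\g$-vector of a suitable decorated representation of a mutated quiver with potential, so the computation reduces to tracking how the $\g$-vector of a decorated representation changes under a single QP mutation at the vertex $k$. Sign-coherence is precisely what lets one pick the same branch of the piecewise-linear mutation formula uniformly across all columns, forcing the change to be the elementary integer column operation described above. Once this identity is in hand, the unimodularity argument closes the induction and completes the proof.
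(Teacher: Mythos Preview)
Your approach coincides with the paper's: induction on distance in $\TT_n$, with the inductive step supplied by sign-coherence (Conjecture~\ref{con:signs-gi}) together with the mutation formula of Conjecture~\ref{con:g-transition}, whose piecewise-linear rule~\eqref{eq:Langlands-dual-trop} becomes a single unimodular linear map once all $k$-th components carry a common sign; this is precisely the observation~\eqref{eq:sign-coherent-tropical-Langlands-imply-basis} cited from \cite[Remark~7.14]{ca4}.

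Two small corrections to your write-up. The transformation you need is~\eqref{eq:Langlands-dual-trop}, which shifts the \emph{initial} vertex $t_0 \overunder{k}{} t_1$ (equivalently, mutates the underlying QP), not the target vertex; the induction should therefore run over all triples $(B,t_0,t)$, reducing the claim for $(B,t_0,t)$ to that for $(\mu_k(B),t_1,t)$. If you instead move the target $t$ via the recurrence~\eqref{eq:deg-mut2}, the extra term $\sum_i [b_{n+i,k}(t)]_+ \bb_i$ spoils the clean elementary-operation picture. Also, with the $\g$-vectors arranged as \emph{columns} of $G_t$, the map~\eqref{eq:Langlands-dual-trop} acts on the matrix by \emph{row} operations (add integer multiples of row~$k$ to the others and negate row~$k$), not column operations; this does not affect the unimodularity conclusion, but it is worth getting straight.
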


\begin{conjecture} (\cite[Conjecture~7.10(1)]{ca4})
\label{con:g-vectors-separate-cluster-monomials}
Suppose we have
$$\sum_{i \in I} a_i \g_{i;t}^{B;t_0} = \sum_{i \in I'} a'_i \g_{i;t'}^{B;t_0}$$
for some $t, t' \in \TT_n$, some nonempty subsets $I, I' \subseteq \{1, \dots, n\}$ and some
positive integers $a_i$ and $a'_i$.
Then there is a bijection $\sigma: I \to I'$ such that, for every
$i \in I$, we have
$$a_i = a'_{\sigma(i)}, \quad \g_{i;t}^{B;t_0} =
\g_{\sigma(i);t'}^{B;t_0}, \quad F_{i;t}^{B;t_0} =
F_{\sigma(i);t'}^{B;t_0}.$$
In particular, for given $B$ and $t_0$, each polynomial
$F_{i;t}^{B;t_0}$ is determined by the vector $\g_{i;t}^{B;t_0}$.
\end{conjecture}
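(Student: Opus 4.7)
My plan is to deduce this conjecture from the representation-theoretic interpretation of $\g$-vectors and $F$-polynomials that forms the main technical content of the paper. The strategy has three ingredients: (i) a family of decorated representations of a quiver with potential attached to each pair $(\ell,t)$; (ii) compatibility of $\g$-vectors with direct sums; (iii) a uniqueness statement saying that, within the class of decorated representations arising from this construction, the $\g$-vector is a complete invariant.

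More precisely, to the initial data $(B,t_0)$ one associates a quiver with potential $(Q,S)$ (so that $Q$ has exchange matrix $B$), and for every $(\ell,t)$ a decorated representation $\M_{\ell;t}$ of $(Q,S)$ with the property that $\g(\M_{\ell;t}) = \g_{\ell;t}^{B;t_0}$ and $F_{\M_{\ell;t}} = F_{\ell;t}^{B;t_0}$, where $\g(\M)$ and $F_{\M}$ are the $\g$-vector and $F$-polynomial of a decorated representation defined in \cite{dwz}. Because both $\g$-vector and $F$-polynomial are additive/multiplicative under direct sums, the hypothesis
\[
\sum_{i \in I} a_i \g_{i;t}^{B;t_0} = \sum_{i \in I'} a'_i \g_{i;t'}^{B;t_0}
\]
translates into the equality
\[
\g\!\left(\bigoplus_{i \in I} \M_{i;t}^{\oplus a_i}\right) = \g\!\left(\bigoplus_{i \in I'} \M_{i;t'}^{\oplus a'_i}\right).
\]

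The crux of the argument is then the uniqueness step: the representations $\M_{\ell;t}$ obtained by iterated mutation from the negative simples are rigid (have vanishing self-extensions in the appropriate sense), and rigid decorated representations of $(Q,S)$ are determined up to isomorphism by their $\g$-vectors. This should be the conclusion of a separate key result proved earlier in the paper, and it is the main obstacle: establishing that $\g$ separates isomorphism classes of rigid decorated representations requires the full machinery of mutations of decorated representations developed in \cite{dwz}, together with the sign-coherence statement (Conjecture~\ref{con:signs-gi}) proved in the representation-theoretic model.

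Once uniqueness is in hand, the equality of $\g$-vectors forces an isomorphism
\[
\bigoplus_{i \in I} \M_{i;t}^{\oplus a_i} \;\cong\; \bigoplus_{i \in I'} \M_{i;t'}^{\oplus a'_i}.
\]
Since the summands $\M_{i;t}$ and $\M_{i;t'}$ are indecomposable (another point to verify: within this family each $\M_{\ell;t}$ is indecomposable), the Krull--Schmidt theorem for decorated representations yields a bijection $\sigma:I \to I'$ with $a_i = a'_{\sigma(i)}$ and $\M_{i;t} \cong \M_{\sigma(i);t'}$ for all $i \in I$. The equalities $\g_{i;t}^{B;t_0} = \g_{\sigma(i);t'}^{B;t_0}$ and $F_{i;t}^{B;t_0} = F_{\sigma(i);t'}^{B;t_0}$ follow immediately, because both invariants are determined by the isomorphism class of the decorated representation. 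The final clause of the conjecture ($F$-polynomials depend only on the $\g$-vector) is the special case $|I|=|I'|=1$ with $a_i = a'_i = 1$.
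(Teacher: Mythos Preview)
Your proposal is correct and follows essentially the same route as the paper. Two small sharpenings: the ``rigidity'' invariant the paper uses is the $E$-invariant $E(\M)$ of \eqref{eq:Einvariant} (not literally $\Ext^1$), whose vanishing for each $\M_{\ell;t}^{B;t_0}$ and their direct sums follows from mutation-invariance of $E$ (Theorem~\ref{th:E-invariant}); and the uniqueness lemma is not that all $E=0$ representations are determined by their $\g$-vector, but the slightly weaker Lemma~\ref{lem:M-M-prime}, which needs one of the two representations to be mutation-equivalent to a negative one and uses the already-established transformation rule \eqref{eq:Langlands-dual-trop} (Conjecture~\ref{con:g-transition}) rather than sign-coherence directly.
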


To state our last conjecture, we need to recall the \emph{matrix
mutation} introduced in \cite{ca1}.
For
%an integer $m \times n$ matrix $\tilde B = (b_{i,j})$ with $m \geq n$, and for
any $k = 1, \dots, n$, we define an integer $n \times n$ matrix $\mu_k(B) = (b'_{i,j})$
by setting
\begin{equation}
\label{eq:B-mutation}
b'_{i,j} =
\begin{cases}
-b_{i,j} &  \text{if $i=k$ or $j=k$;} \\[.05in]
b_{i,j} + [b_{i,k}]_+ \ [b_{k,j}]_+ \ - [-b_{i,k}]_+\ [-b_{k,j}]_+
 & \text{otherwise,}
\end{cases}
\end{equation}
where we use the notation
\begin{equation}
\label{eq:x+}
[b]_+ = \max(b,0).
\end{equation}

\begin{conjecture} (\cite[Conjecture~7.12]{ca4})
\label{con:g-transition}
Let $t_0 \overunder{k}{} t_1$ be two adjacent vertices in~$\TT_n$,
and let $B' = \mu_k(B)$.
Then, for any $t \in \TT_n$ and $\ell = 1, \dots, n$,
the $\g$-vectors $\g_{\ell;t}^{B;t_0} = (g_1, \dots, g_n)$
and $\g_{\ell;t}^{B';t_1} = (g'_1, \dots, g'_n)$ are related as
follows:
\begin{equation}
\label{eq:Langlands-dual-trop}
g'_j =
\begin{cases}
-g_k  & \text{if $j = k$};\\[.05in]
g_j + [b_{j,k}]_+ g_k
  - b_{j,k} \min(g_k,0)
 & \text{if $j \neq k$}.
\end{cases}
\end{equation}
\end{conjecture}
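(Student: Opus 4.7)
The plan is to leverage the interpretation of $\g$-vectors as numerical invariants of decorated representations of quivers with potentials (QPs), which is the principal machinery developed in this paper. To each pair $(\ell, t)$ one associates a decorated representation $\M_{\ell;t}^{B;t_0}$ of the QP $(Q(B),S)$ attached to $B$ at $t_0$, and the $\g$-vector $\g_{\ell;t}^{B;t_0}$ is expressed as an explicit combinatorial invariant $\g(\M)$ of this representation (in terms of dimensions of kernels/cokernels of certain linear maps at each vertex together with the dimension of the decoration space). The key structural input from Part~I (\cite{dwz}) is that QP mutation at vertex $k$ defines an involution $\mu_k$ on isomorphism classes of decorated representations, together with explicit rules for how the underlying dimension vectors transform.

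The first step is to establish the compatibility
\[
\M_{\ell;t}^{B';t_1} \;\cong\; \mu_k\!\left(\M_{\ell;t}^{B;t_0}\right),
\]
linking the decorated representations at the two neighboring initial vertices $t_0$ and $t_1$. I would prove this by induction on the combinatorial distance in $\TT_n$ from the moving base to $t$, tracking the recursion defining $\M_{\ell;t}$ edge by edge through the mutated QP and invoking the involutivity of $\mu_k$. The base case, where $t \in \{t_0, t_1\}$, is immediate from the definition of $\M_{\ell;t}$ on the negative simples and their mutations.

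The second step is a purely combinatorial verification of the transition formula. Substituting the explicit expression for $\g(\mu_k \M)$ in terms of $\g(\M)$ and the entries of $B$ (provided by the dimension-vector calculus of QP mutation) yields precisely the piecewise-linear rule \eqref{eq:Langlands-dual-trop}. The two branches of \eqref{eq:Langlands-dual-trop}, distinguished by the sign of $g_k$, match the case split inherent in QP mutation at vertex $k$, which depends on how the incoming and outgoing maps at $k$ interact with the decoration.

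The main obstacle is the first step: controlling how $\M_{\ell;t}$ propagates as one moves the base vertex. The recursion involves successive mutations along the path $t_0 \rightsquigarrow t$ in $\TT_n$, and one must check that replacing the first edge of this path by the reversed edge $t_1 \overunder{k}{} t_0$, and simultaneously mutating the QP via $\mu_k$, leaves the resulting decorated representation unchanged up to $\mu_k$. Once this ``cocycle'' compatibility is in place, formula \eqref{eq:Langlands-dual-trop} follows directly from the transformation rule for $\g(\M)$ under $\mu_k$.
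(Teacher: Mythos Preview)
Your first step is not the obstacle you think it is: the identity $\M_{\ell;t}^{B';t_1}\cong\mu_k(\M_{\ell;t}^{B;t_0})$ is essentially the \emph{definition} of the family $\M_{\ell;t}^{B;t_0}$ (see \eqref{eq:Mlt-mutation}), so no inductive cocycle argument is needed there.

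The genuine gap is in your second step. The transformation rule for $\g_\M$ under QP mutation at~$k$ does \emph{not} produce the piecewise-linear formula \eqref{eq:Langlands-dual-trop} for an arbitrary QP-representation. What the dimension-vector calculus actually gives (Lemma~\ref{lem:g-F-mutation}) is
\[
g'_j = g_j + [b_{j,k}]_+\, g_k - b_{j,k}\, h_k \qquad (j\neq k),
\]
where $h_k = -\dim\ker\beta_{k;M}$. There is no intrinsic ``case split depending on the sign of $g_k$'' in QP mutation; the split you allude to simply does not appear at that level. To obtain \eqref{eq:Langlands-dual-trop} one must prove the equality
\[
h_k = \min(0,\,g_k),
\]
and this is \emph{not} a general fact about QP-representations --- it fails, for instance, for the regular indecomposables over the Kronecker quiver discussed after Corollary~\ref{cor:dkplus-dkminus}. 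The paper secures this equality only for the specific representations $\M_{\ell;t}^{B;t_0}$ by invoking the $E$-invariant: one shows $E(\M_{\ell;t}^{B;t_0})=0$ (via mutation-invariance of $E$, Theorem~\ref{th:E-invariant}, starting from a negative simple), and then uses the lower bound Theorem~\ref{th:E-lower-bound-beta} to force either $\ker\beta_k=0$ or $\ker\gamma_k=\im\beta_k$ and $V(k)=0$, which is exactly what makes $h_k=\min(0,g_k)$ hold. Your outline omits this entire mechanism, and without it the argument does not close.
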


We can now state one of our main results.

\begin{theorem}
\label{th:fg-conjectures}
The conjectures \ref{con:F-CT-1} -- \ref{con:g-transition} hold
under the assumption that the exchange matrix~$B$ is
skew-symmetric.
\end{theorem}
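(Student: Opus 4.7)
The plan is to build a representation-theoretic interpretation of the $\g$-vectors and $F$-polynomials in terms of decorated representations of a quiver with potential, and then to read off each conjecture from that interpretation. Given skew-symmetric~$B$, encode it as a quiver~$Q$ on vertices $\{1,\dots,n\}$ with $[b_{i,j}]_+$ arrows from~$i$ to~$j$, and fix a non-degenerate potential~$W$ on~$Q$ (which exists by the results of~\cite{dwz}). To each pair $(\ell,t)$ with $\ell\in\{1,\dots,n\}$ and $t\in\TT_n$ I would associate a decorated representation $\M_{\ell;t}$ of $(Q,W)$: at $t=t_0$ let $\M_{\ell;t_0}$ be the negative simple at vertex $\ell$; for $t \overunder{k}{} t'$ define the family at~$t'$ by applying the QP-mutation $\mu_k$ of decorated representations developed in~\cite{dwz}. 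The compatibility of matrix mutation of~$B$ with QP-mutation of $(Q,W)$, proved in~\cite{dwz}, makes this definition consistent.

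Next I would establish, by induction on the distance of~$t$ from~$t_0$ in $\TT_n$, the two central identities
\begin{equation*}
F_{\ell;t}^{B;t_0}(u_1,\dots,u_n) \;=\; \sum_{\e} \chi\bigl(\Gr_{\e}(M_{\ell;t})\bigr) \prod_{i=1}^n u_i^{e_i}, \qquad \g_{\ell;t}^{B;t_0} \;=\; \g_{\M_{\ell;t}},
\end{equation*}
where $M_{\ell;t}$ is the underlying ordinary representation of $\M_{\ell;t}$, $\Gr_{\e}$ is the quiver Grassmannian of subrepresentations of dimension vector~$\e$, and $\g_{\M}$ is the $\g$-vector attached to a decorated representation via $\Hom$/$\Ext$ data with the negative simples. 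The base case is trivial, and the inductive step reduces to matching the recurrences defining $F_{\ell;t}^{B;t_0}$ and $\g_{\ell;t}^{B;t_0}$ on $\TT_n$ against transformation rules for quiver Grassmannians and homological $\g$-vectors under QP-mutation; this matching is the technical core of the paper and relies on the mutation machinery of~\cite{dwz}.

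With these identities in hand, the conjectures translate into representation-theoretic statements. Conjecture~\ref{con:F-CT-1} follows because $\Gr_{\mathbf{0}}(M_{\ell;t})=\{0\}$ contributes~$1$ to the constant term. Conjecture~\ref{con:F-HT-1} follows because $\Gr_{\e}(M)$ is empty unless $\e\leq \dim M_{\ell;t}$ componentwise, while $\Gr_{\dim M_{\ell;t}}(M_{\ell;t})=\{M_{\ell;t}\}$ contributes a top monomial with coefficient~$1$ divisible by every monomial actually appearing. Conjecture~\ref{con:g-transition} is a direct computation comparing the tropical rule~\eqref{eq:Langlands-dual-trop} with the mutation formula for $\g_{\M}$ under $\mu_k$. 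Conjecture~\ref{con:signs-gi} becomes the sign-coherence of the $\g$-vectors of the family $\{\M_{\ell;t}\}_{\ell=1}^n$ at each fixed~$t$, which reflects the fact that the kernel-like and cokernel-like contributions to $\g_{\M}$ have disjoint supports at each vertex. Conjecture~\ref{con:g-vector-Z-basis} then follows by induction on the distance from~$t_0$, since the transition matrix in~\eqref{eq:Langlands-dual-trop} has determinant~$\pm 1$. Finally, Conjecture~\ref{con:g-vectors-separate-cluster-monomials} reduces, via the two identities above, to the uniqueness statement that the isomorphism class of $\M_{\ell;t}$ is determined by its $\g$-vector.

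I expect the main obstacle to lie at two points. The first is proving the mutation identity for Euler characteristics of quiver Grassmannians that underlies the $F$-polynomial formula: this requires a careful comparison of $\Gr_{\e}(M)$ and $\Gr_{\e'}(\mu_k M)$ and is the technical heart of the argument. The second, and subtler, difficulty is the uniqueness statement behind Conjecture~\ref{con:g-vectors-separate-cluster-monomials}; this is global in nature over the whole tree $\TT_n$ and is where the full strength of the decorated-representation theory of~\cite{dwz}, together with a characterization of $\M_{\ell;t}$ via its homological data, will be required.
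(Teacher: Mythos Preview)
Your overall architecture is right and matches the paper: build the QP-representations $\M_{\ell;t}$ by iterated mutation from negative simples, prove by induction that $F_{\ell;t}^{B;t_0}=F_{\M_{\ell;t}}$ and $\g_{\ell;t}^{B;t_0}=\g_{\M_{\ell;t}}$, and then deduce each conjecture. Conjectures~\ref{con:F-CT-1}, \ref{con:F-HT-1}, and \ref{con:g-vector-Z-basis} do fall out as you describe.

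However, there is a genuine gap in your treatment of Conjectures~\ref{con:signs-gi}, \ref{con:g-vectors-separate-cluster-monomials}, and \ref{con:g-transition}: you are missing the $E$-invariant, which is the main new ingredient of the paper. The mutation rule one actually proves for $\g_\M$ (Lemma~\ref{lem:g-F-mutation}) is \eqref{eq:g-transition}, which involves $h_k=-\dim\ker\beta_k$, not $\min(g_k,0)$. Conjecture~\ref{con:g-transition} asserts the tropical rule \eqref{eq:Langlands-dual-trop}, and matching the two requires the identity $h_k=\min(g_k,0)$. This is \emph{not} a direct computation; it fails for arbitrary QP-representations. The paper proves it for the specific $\M_{\ell;t}$ by introducing $E(\M)=\langle M,M\rangle+\sum_k g_k\dim M(k)$, showing it is invariant under mutation (Theorem~\ref{th:E-invariant}), hence $E(\M_{\ell;t})=0$, and then establishing the lower bound \eqref{eq:E-lower-bound-beta} (Theorem~\ref{th:E-lower-bound-beta}), which forces $\ker\beta_k=0$ or $\im\beta_k=\ker\gamma_k$ and yields $h_k=\min(g_k,0)$.

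The same mechanism drives sign-coherence: your remark about ``disjoint supports'' is not an argument. The paper applies the $E=0$ reasoning to the direct sum $\bigoplus_\ell \M_{\ell;t}$, obtaining $\max(h_k,h_k')=0$, which forces all $g_k(\M_{\ell;t})$ to have a common sign. Likewise, the uniqueness behind Conjecture~\ref{con:g-vectors-separate-cluster-monomials} is proved via Lemmas~\ref{lem:negative-reps-characterization} and~\ref{lem:M-M-prime}, both of which hinge on $E(\M)=0$ and its mutation-invariance. Without the $E$-invariant, your proposal has no tool to bridge the gap between the mutation formula you can prove and the conjectured tropical formula, and no tool to establish the uniqueness statement.
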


\begin{remark}
As explained in \cite[Remark~7.11]{ca4}, Conjectures~\ref{con:F-CT-1}
and \ref{con:g-vectors-separate-cluster-monomials} imply the
linear independence of cluster monomials in any cluster algebra
satisfying a mild additional condition \cite[(7.10)]{ca4}.
\end{remark}

\begin{remark}
The above conjectures were established in \cite{fk} under some
additional conditions (that the cluster algebras in question admit
a certain categorification).
Our method described below has an advantage that the only
condition we need is that the matrix~$B$ is skew-symmetric.
\end{remark}

As mentioned already, our proof of Theorem~\ref{th:fg-conjectures}
is based on interpreting $\g$-vectors and $F$-polynomials in terms
of representations of quivers with potentials.
First of all, a skew-symmetric integer $n \times n$ matrix~$B$ can
be encoded by a quiver $Q(B)$ without loops and oriented $2$-cycles on
the set of vertices $[1,n]=\{1, \dots, n\}$.
This is done as follows:
\begin{equation}
\label{eq:QB}
\text{for any two vertices $i \neq j$ there
are $[b_{i,j}]_+$ arrows from $j$ to $i$ in $Q(B)$.}
\end{equation}
As is customary these days, we represent a quiver by a quadruple
$(Q_0,Q_1,h,t)$ consisting of a pair of finite
sets $Q_0$ (\emph{vertices}) and $Q_1$ (\emph{arrows}) supplied
with two maps $h:Q_1 \to Q_0$ (\emph{head}) and $t:Q_1 \to Q_0$
(\emph{tail}); every arrow $a \in Q_1$ is viewed as a directed edge
$a: t(a) \rightarrow h(a)$.
For the quiver $Q(B)$, the vertex set $Q_0$ is identified with $[1,n]$.

Recall that a representation~$M$ of a quiver~$Q$ is specified by a family of
finite-dimensional vector spaces $(M(i))_{i \in Q_0}$ (for
simplicity we work over~$\C$) and a family of linear maps
$a = a_M: M(t(a)) \to M(h(a))$ for $a \in Q_1$.
The \emph{dimension vector}~$\dd_M$ of~$M$ is given by
\begin{equation}
\label{eq:dim-vector}
\dd_M=(\dim M(1), \dots, \dim M(n)).
\end{equation}
For every integer vector $\e = (e_1, \dots, e_n)$, we denote by
$\Gr_\e(M)$ the \emph{quiver Grassmannian} of
subrepresentations  $N \subseteq M$ with $\dd_N = \e$.
In simple terms, an element of $\Gr_\e(M)$ is an $n$-tuple
$(N(1), \dots, N(n))$, where each $N(i)$ is a subspace of dimension
$e_i$ in $M(i)$, and $a_M (N(j)) \subseteq N(i)$ for any arrow $a: j \to i$.
Thus, $\Gr_\e(M)$ is a closed subvariety of the product of ordinary
Grassmannians $\prod_{i = 1}^n \Gr_{e_i}(M(i))$, hence a
projective algebraic variety.

Let $\chi(\Gr_\e (M))$ denote the Euler-Poincar\'e characteristic
of $\Gr_\e (M)$ (see e.g., \cite[Section~4.5]{fulton}).
We associate to a quiver representation~$M$ the polynomial
$F_M \in \Z[u_1, \dots, u_n]$ given by
\begin{equation}
\label{eq:F-M}
F_M(u_1, \dots, u_n) =
\sum_\e \chi(\Gr_\e (M)) \prod_{i=1}^n u_i^{e_i}.
\end{equation}
We refer to $F_M$ as the \emph{$F$-polynomial} of~$M$.

It is immediate from \eqref{eq:F-M} that every polynomial $F_M$
satisfies properties in Conjectures~\ref{con:F-CT-1} and \ref{con:F-HT-1}.
Thus, to prove these conjectures for any skew-symmetric matrix~$B$,
it suffices to construct, for $t_0$, $\ell$ and $t$ as above,
a representation~$M = M_{\ell;t}^{B;t_0}$ of $Q(B)$ such that
\begin{equation}
\label{eq:F-quiver-cluster}
F_{\ell;t}^{B;t_0} = F_M.
\end{equation}
We do this in Theorem~\ref{th:g-F-rep} using mutations of quivers with
potentials and their representations introduced and studied in
\cite{dwz}.

To prove the conjectures involving $\g$-vectors, we need to
consider quiver representations equipped with some extra structure.
First, following \cite{mrz}, we work with \emph{decorated representations}
$\M = (M,V)$, where~$M$ is a representation of $Q(B)$, and
$V = (V(i))_{i \in Q_0}$ is a family of
finite-dimensional $\C$-vector spaces, with no maps attached.
Second, $M$ must be \emph{nilpotent}, that is, annihilated by all
sufficiently long paths in~$Q(B)$.
Finally and most importantly, the action of arrows in~$M$ must
satisfy the relations from the Jacobian ideal of a generic potential on~$Q(B)$.
The corresponding setup developed in \cite{dwz} will be recalled in
Section~\ref{sec:QP-background}, here we just describe a general form of the relations.
For every two arrows $a, b \in Q_1$ with $h(a) = t(b)$,
a generic potential~$S$ on~$Q(B)$ gives rise to
an element $\partial_{ba} (S)$ of the complete path
algebra of $Q(B)$: this is a (possibly infinite) linear
combination of paths from $h(b)$ to $t(a)$.
For every $k \in Q_0$,
these elements give rise to the triangle of linear maps
\begin{equation}
\label{eq:triangle}
\xymatrix{
& M(k)\ar@<.5ex>[rd]^{\beta_k} &\\
M_{\rm in}(k)
%\ar@<.5ex>[rr]^{3}
\ar@<.5ex>[ru]^{\alpha_k} & & M_{\rm out}(k)\ar@<.5ex>[ll]^{\gamma_k}
%\ar@<.5ex>[lu]
}
\end{equation}
Here the spaces $M_{\rm in}(k)$
and $M_{\rm out}(k)$ are given by
\begin{equation}
\label{eq:in-out-spaces}
M_{\rm in}(k) = \bigoplus_{h(a) = k} M(t(a)), \quad
M_{\rm out}(k) = \bigoplus_{t(b) = k} M(h(b)),
\end{equation}
the maps $\alpha_k$ and $\beta_k$ are given by
\begin{equation}
\label{eq:in-out-maps}
\alpha_k = \sum_{h(a) = k} a_M, \quad
\beta_k = \sum_{t(b) = k} b_M,
\end{equation}
and, for each $a, b \in Q_1$ with $h(a) = t(b) = k$, the
component $\gamma_{a,b}: M(h(b)) \to M(t(a))$ of $\gamma_k$
is given by
\begin{equation}
\label{eq:gamma-ab}
\gamma_{a,b} = (\partial_{ba} S)_M.
\end{equation}

In these terms, the relations on~$M$ imposed by the choice of $S$
are just the following:
\begin{equation}
\label{eq:triangle-relations}
\alpha_k \circ \gamma_k = 0, \quad \gamma_k \circ \beta_k = 0.
\end{equation}
We refer to a decorated representation with these properties as a
\emph{QP-representation} (for ``quivers with potentials").

Now we define the $\g$-vector $\g_{\M} = (g_1, \dots, g_n) \in \Z^n$ of a
QP-representation~$\M = (M,V)$ by setting
\begin{equation}
\label{eq:g-M}
g_k = \dim \ker \gamma_k - \dim M(k) + \dim V(k) \ .
\end{equation}

As a first step towards proving Conjectures~\ref{con:signs-gi} -
\ref{con:g-transition} for $B$ skew-symmetric, in
Theorem~\ref{th:g-F-rep} we construct, for $t_0$, $\ell$ and $t$ as above,
an indecomposable QP-representation $\M = \M_{\ell;t}^{B;t_0}$ of $Q(B)$ such that
\begin{equation}
\label{eq:g-quiver-cluster}
\g_{\ell;t}^{B;t_0} = \g_{\M}
\end{equation}
(note that $\M = (M,V)$, where the quiver representation~$M = M_{\ell;t}^{B;t_0}$
satisfies \eqref{eq:F-quiver-cluster}).

Our main tool in working with QP-representations is the mutation
operation $\M \mapsto \mu_k (\M)$ (for each $k \in Q_0$) sending
QP-representations of the quiver $Q(B)$ to those of $Q(\mu_k(B))$.
This operation was introduced and studied in \cite{dwz}, where it
was shown in particular that $\mu_k$ sends indecomposable QP-representations
into indecomposable ones.
In terms of the mutations, the family of QP-representations $\M_{\ell;t}^{B;t_0}$
is determined by the following two properties:
\begin{itemize}
\item For $t = t_0$, we have
\begin{equation}
\label{eq:neg-simple}
\M_{\ell;t_0}^{B;t_0} = \mathcal S_\ell^-,
\end{equation}
the \emph{negative simple QP-representation} such that the only
nonzero space among the $M(i)$ and $V(i)$ is $V(\ell) = \C$.

\smallskip

\item If $t_0 \overunder{k}{} t_1$ in~$\TT_n$, and $B' = \mu_k(B)$
then
\begin{equation}
\label{eq:Mlt-mutation}
\M_{\ell;t}^{B';t_1} = \mu_k(\M_{\ell;t}^{B;t_0}).
\end{equation}
\end{itemize}

In contrast with the situation for $F$-polynomials, where the
interpretation \eqref{eq:F-quiver-cluster} immediately implies
Conjectures~\ref{con:F-CT-1} and \ref{con:F-HT-1}, deducing
Conjectures~\ref{con:signs-gi} -- \ref{con:g-transition}
from \eqref{eq:g-quiver-cluster} requires further work.
The main new ingredient is the following integer-valued function
on QP-representations: for a QP-representation ${\mathcal M}=(M,V)$ of a quiver~$Q$,
we define the \emph{$E$-invariant} by
\begin{equation}
\label{eq:Einvariant}
E({\mathcal M})=\dim \Hom_Q (M,M)+\sum_{k=1}^n g_k \dim M(k),
\end{equation}
where $g_k$ is given by \eqref{eq:g-M}, and $\Hom_Q$ stands for
the space of homomorphisms of quiver representations.
In Theorem~\ref{th:E-invariant} we prove that $E(\M)$ is invariant
under mutations, i.e., for every $k$ we have $E(\mu_k(\M))=E(\M)$.
Then it follows from \eqref{eq:neg-simple} and \eqref{eq:Mlt-mutation} that
$E(\M_{\ell;t}^{B;t_0}) = 0$ for all $\ell$ and $t$.

Since the numbers $g_k$ may be negative, it is not {\it a priori} clear
that $E(M)$ takes nonnegative values.
We prove this property in Theorem~\ref{th:E-lower-bound-beta}, establishing the following
much sharper lower bound:
\begin{equation}
\label{eq:E-lower-bound-beta}
E(\M) \geq \sum_{k \in Q_0}(\dim \ker \beta_k \cdot \dim (\ker\gamma_k/\im \beta_k)+
\dim M(k) \cdot \dim V(k)).
\end{equation}
As a consequence, for each $\M$ of the form $\M_{\ell;t}^{B;t_0}$,
the right hand side of \eqref{eq:E-lower-bound-beta} is equal
to~$0$, and this information turns out to be exactly what we need
for proving Conjectures~\ref{con:signs-gi} -
\ref{con:g-transition}.

Note that in  view of \eqref{eq:neg-simple} and
\eqref{eq:Mlt-mutation}, the QP-representations $\M_{\ell;t}^{B;t_0}$
can be characterized as those obtained by a sequence of mutations
from a negative simple representation.
We conjecture that this family coincides with the family of indecomposable
QP-representations $\M$ such that $E(\M) = 0$.
As a possible step towards proving this conjecture, in Section~\ref{sec:homological}
we develop a homological interpretation of~$E(\M)$ in the case where the potential is finite
and the Jacobian algebra is finite dimensional.
This interpretation is based on constructing a projective presentation for QP-representations, see
Proposition~\ref{pr:min-presentation}.

The paper is organized as follows.
Sections~\ref{sec:F-poly} -- \ref{sec:QP-background} are devoted
to preliminaries.
The necessary background on cluster algebras is recalled in Section~\ref{sec:F-poly}.
In Section~\ref{sec:F-QP} we collect some general properties of
$F$-polynomials of quiver representations to be used later.
We conclude this section with two examples, showing that a quiver Grassmannian may be
singular, and that it may have  negative Euler characteristic.
The necessary background from \cite{dwz} on quivers with
potentials (QP's) and their representations is collected in
Section~\ref{sec:QP-background}.

Section~\ref{sec:qp-interpretation} contains the first important new result of the paper
-- Theorem~\ref{th:g-F-rep}.
It asserts that the family of QP-representations recursively
defined by conditions \eqref{eq:neg-simple} and \eqref{eq:Mlt-mutation}
provides a representation-theoretic interpretation given by
\eqref{eq:F-quiver-cluster} and \eqref{eq:g-quiver-cluster}
of $F$-polynomials and $\g$-vectors arising in the theory of cluster algebras.
As a consequence, we obtain in Corollary~\ref{cor:CC-formula} a
formula for cluster variables in the coefficient-free cluster
algebra, which generalizes the Caldero-Chapoton formula in \cite[Theorem~3]{ck}.

In Section~\ref{sec:homs-modulo-confined} we prove
Proposition~\ref{pr:mutation-preserves-homs-mod-k},
a technical result preparing the ground for the later proof of
the invariance under mutations of the function $E(\M)$
given by \eqref{eq:Einvariant}.
Roughly speaking, Proposition~\ref{pr:mutation-preserves-homs-mod-k} says that the mutation
at a vertex~$k$ preserves the space of homomorphisms between any
two QP-representations modulo the homomorphisms ``confined" to~$k$.
This result of independent interest was already established in
\cite[Theorem~7.1]{birs} but the present proof seems to be much simpler.
In the rest of Section~\ref{sec:homs-modulo-confined} we show that
the isomorphism in Proposition~\ref{pr:mutation-preserves-homs-mod-k}
can be stated in a functorial way.

The main result in Section~\ref{sec:E} is
Theorem~\ref{th:E-invariant} establishing in particular the
invariance of $E(\M)$ under mutations.
Another useful result there is
Proposition~\ref{pr:E-invariant-star} saying that $E(\M)$ is
invariant under passing to the dual QP-representation of the opposite QP.

In Section~\ref{sec:inequalities} we prove the bound \eqref{eq:E-lower-bound-beta}
(Theorem~\ref{th:E-lower-bound-beta}).
The proof of Theorem~\ref{th:fg-conjectures} is obtained by
combining this result with the results in the preceding sections;
this is done in Section~\ref{sec:proof-cluster-conjectures}.
The concluding Section~\ref{sec:homological} is devoted to the
above-mentioned homological interpretation of the $E$-invariant of
QP-representations.\\[10pt]
%Finally, the Appendix contains a fairly general result about
%projective resolutions for quivers with relations, which we need
%in Section~\ref{sec:homological}.\\[10pt]

\noindent{\bf  Acknowledgement.}  The authors are grateful to Grzegorz Bobi\'nski for providing useful
references, and to an anonymous referee for several helpful
suggestions.
\section{Background on $\g$-vectors and $F$-polynomials}
\label{sec:F-poly}

First of all, we recall that the same rule as in \eqref{eq:B-mutation}
defines the matrix mutation $\mu_k$
for any integer $m \times n$ matrix $\tilde B = (b_{i,j})$ with $m \geq n$,
and any $k = 1, \dots, n$.
This is an involution on the set of integer $m \times n$ matrices.
We call the top $n \times n$ submatrix~$B$ of $\tilde B$
the \emph{principal part} of~$\tilde B$; then $\mu_k(B)$ is the principal part
of $\mu_k( \tilde B)$.
Note also that, if $B$ is skew-symmetrizable, that is,
$d_i b_{i,j} = - d_j b_{j,i}$ for some positive integers $d_1, \dots, d_n$,
then the same choice of $d_1, \dots, d_n$ makes $\mu_k(B)$ skew-symmetrizable as well.
In particular, if $B$ is skew-symmetric then $\mu_k(B)$ is also skew-symmetric.

We say that a family of $m \times n$ integer matrices $(\tilde B(t)_{t \in \TT_n})$ is
a skew-symmetrizable (resp. skew-symmetric) \emph{matrix pattern} of
format $m \times n$ on $\TT_n$ if the principal part $B(t)$ of each $\tilde B(t)$ is
skew-symmetrizable (resp. skew-symmetric), and we have
$\tilde B(t') = \mu_k(\tilde B(t))$ whenever
$t \overunder{k}{} t'$.
Clearly, such a pattern is uniquely determined by each of its
matrices $\tilde B(t_0)$, which can be chosen arbitrarily with
the only condition that its principal part is skew-symmetrizable (resp. skew-symmetric).

Now choose any skew-symmetrizable $n \times n$ integer matrix
$B$ and any vertex $t_0 \in \TT_n$.
We associate to $B$ and $t_0$ the skew-symmetrizable matrix pattern of
format $2n \times n$ such that $\tilde B(t_0) = (b_{i,j})$ has principal part $B$,
and its bottom part is the $n \times n$ identity matrix, that is,
$b_{n+i,j} = \delta_{i,j}$ for $i,j = 1, \dots, n$; we refer to
this pattern as the \emph{principal coefficients pattern} associated to
$B$ and $t_0$.
Let us denote this pattern simply as $(\tilde B(t) = (b_{i,j}(t)))_{t \in \TT_n}$
(with the understanding that $B$ and $t_0$ are fixed).

Now, according to \cite[Proposition~6.6]{ca4}, the vectors $\g_{\ell;t} = \g_{\ell;t}^{B;t_0}$
can be defined by the initial conditions
\begin{equation}
\label{eq:gg-initial}
\g_{\ell;t_0} = \e_\ell \quad (\ell = 1, \dots, n)
\end{equation}
together with the recurrence relations
\begin{align}
\label{eq:deg-mut1}
\g_{\ell;t'}&=\g_{\ell;t} \quad \text{for $\ell\neq
  k$;}\\
\label{eq:deg-mut2}
\g_{k;t'} &= -\g_{k;t} +\sum_{i=1}^n [b_{i,k}(t)]_+ \g_{i;t}
-\sum_{i=1}^n  [b_{n+i,k}(t)]_+ \bb_i
 \,
\end{align}
for every edge $t \overunder{k}{} t'$ in~$\TT_n\,$. Here $\e_1, \dots, \e_n$ are the unit vectors in $\Z^n$,
and $\bb_1, \dots, \bb_n$ are the columns of $B$.

Similarly, by \cite[Proposition~5.1]{ca4}, the
polynomials $F_{\ell;t} = F_{\ell;t}^{B;t_0}(u_1,\dots,u_n)$
can be defined by the initial conditions
\begin{equation}
\label{eq:F-initial}
F_{\ell;t_0} = 1 \quad (\ell = 1,
\dots, n) \ ,
\end{equation}
together with the recurrence relations
\begin{align}
\label{eq:F-mut1}
F_{\ell;t'}&= F_{\ell;t} \quad \text{for $\ell\neq
  k$;}\\
\label{eq:F-mut2}
F_{k;t'} &= \frac{\prod_{i=1}^n u_i^{[b_{n+i,k}(t)]_+} F_{i;t}^{[b_{i,k}(t)]_+}
+ \prod_{i=1}^n u_i^{[-b_{n+i,k}(t)]_+} F_{i;t}^{[-b_{i,k}(t)]_+}}{F_{k;t}}\,,
\end{align}
for every edge $t \overunder{k}{} t'$ in~$\TT_n\,$.

For instance, if $t_1 \overunder{k}{} t_0\,$, then
$\g_{k;t_1}^{B;t_0} = -\e_k + \sum_{i=1}^n [-b_{i,k}]_+ \e_i$,
and $F_{k;t_1}^{B;t_0} = u_k + 1$.

Here is a specific example for the cluster algebra of type~$A_2$
(cf. \cite[Examples~2.10, 3.4, 6.7]{ca4}).

\begin{example}
\label{example:A2-F-polys}
Let $n=2$. The tree $\TT_2$ is an infinite chain.
We denote its vertices by
$\dots,t_{-1}\,,t_0\,,t_1\,,t_2\,,\dots$,
and label its edges as follows:
\begin{equation}
\label{eq:TT2}
\cdots
\overunder{2}{} t_{-1}
\overunder{1}{} t_0
\overunder{2}{} t_1
\overunder{1}{} t_2
\overunder{2}{} t_3
\overunder{1}{} \cdots \,.
\end{equation}
Let
$B=\left[
\begin{smallmatrix}
0 & 1\\
-1&0
\end{smallmatrix}
\right]$.
The $\g$-vectors $\g_{\ell;t} = \g_{\ell;t}^{B;t_0}$ and $F$-polynomials
$F_{\ell;t} = F_{\ell;t}^{B;t_0}$ are shown in
Table~\ref{table:F-poly-A2} (the last column will be explained later).
\begin{table}[ht]
\begin{equation*}
\begin{array}{|c|c|cc|cc|cc|}
\hline
&&&\\[-4mm]
t & \tilde B(t) & \hspace{7mm}
\g_{1;t} & \g_{2;t} &F_{1;t} & F_{2;t} & \h_{1;t} & \h_{2;t}
 \\[1mm]
\hline
&&&\\[-3mm]
t_0 &
\left[\begin{smallmatrix}0&1\\-1&0\\1&0\\0&1\end{smallmatrix}\right] &
\twobyone{1}{0} & \twobyone{0}{1} & 1 & 1 & \twobyone{0}{0} & \twobyone{0}{0}
\\[4.5mm]
\hline
&&&\\[-3mm]
t_1 & \left[\begin{smallmatrix}0&-1\\1&0\\1&0\\0&-1\end{smallmatrix}\right] &
\twobyone{1}{0} & \twobyone{0}{-1} & 1& u_2+1 & \twobyone{0}{0} & \twobyone{0}{-1}
\\[4.5mm]
\hline
&&&\\[-3mm]
t_2 & \left[\begin{smallmatrix}0&1\\-1&0\\-1&0\\0&-1\end{smallmatrix}\right] &
\twobyone{-1}{0} & \twobyone{0}{-1} & u_1 u_2+u_1+1 & u_2+1 & \twobyone{-1}{0} & \twobyone{0}{-1}
\\[4.5mm]
\hline
&&&\\[-3mm]
t_3 & \left[\begin{smallmatrix}0&-1\\1&0\\-1&0\\-1&1\end{smallmatrix}\right] &
\twobyone{-1}{0} & \twobyone{-1}{1} & u_1 u_2+u_1+1 & u_1+1 & \twobyone{-1}{0} & \twobyone{-1}{0}
\\[4.5mm]
\hline
&&&\\[-3mm]
t_4 & \left[\begin{smallmatrix}0&1\\-1&0\\1&-1\\1&0\end{smallmatrix}\right] &
\twobyone{0}{1} & \twobyone{-1}{1} & 1 & u_1+1 & \twobyone{0}{0} & \twobyone{-1}{0}
\\[4.5mm]
\hline
&&&\\[-3mm]
t_5 &
\left[\begin{smallmatrix}0&-1\\1&0\\0&1\\1&0\end{smallmatrix}\right] &
\twobyone{0}{1} & \twobyone{1}{0} & 1 & 1 & \twobyone{0}{0} & \twobyone{0}{0}
 \\[4.5mm]
\hline
\end{array}
\end{equation*}
\smallskip

\caption{$\g$-vectors, $F$-polynomials, and $\h$-vectors in type~$A_2$}
\label{table:F-poly-A2}
\end{table}

Observing that $\tilde B(t_5)$ is obtained from $\tilde B(t_0)$ by
interchanging the two columns, and comparing $\g$-vectors and $F$-polynomials at
$t_0$ and $t_5$, we obtain the following periodicity property:
$$\g_{\ell;t_{m+5}} = \g_{3-\ell;t_{m}}, \quad
F_{\ell;t_{m+5}}(u_1,u_2) = F_{3-\ell;t_{m}}(u_2,u_1) \quad (m \in \Z) \ .$$
\end{example}

\smallskip

Returning to the general situation, we note that the definition
makes it clear that all $F_{\ell;t}(u_1, \dots, u_n)$ are rational functions
with coefficients in~$\Q$.
The following stronger statement was proven in \cite[Propositions~3.6, 5.2]{ca4}.

\begin{proposition}
\label{pr:F-poly}
Each of the rational functions $F_{\ell;t}(u_1, \dots, u_n)$ is a polynomial with integer
coefficients, which is not divisible by any~$u_i$.
\end{proposition}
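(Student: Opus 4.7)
The plan is to obtain Proposition~\ref{pr:F-poly} as a corollary of the Laurent phenomenon for cluster algebras, applied to the cluster algebra $\mathcal A(\tilde B(t_0))$ with \emph{principal coefficients} at $t_0$ that was just introduced.  Denote by $x_1,\dots,x_n$ the initial cluster variables, by $y_1,\dots,y_n$ the coefficient variables (corresponding to the bottom $n\times n$ identity block of $\tilde B(t_0)$), and by $X_{\ell;t}$ the cluster variable obtained at $t\in\TT_n$ by a sequence of mutations from $t_0$ to $t$ following the edge labels.

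First, I would quote the \emph{separation formula} of \cite[Corollary~6.3]{ca4}, which expresses
\begin{equation*}
X_{\ell;t} \;=\; x_1^{g_1}\cdots x_n^{g_n}\cdot \frac{F_{\ell;t}(\hat y_1,\dots,\hat y_n)}{F_{\ell;t}|_{\Trop}(y_1,\dots,y_n)}, \qquad \hat y_j = y_j\prod_{i=1}^n x_i^{b_{ij}},
\end{equation*}
where $(g_1,\dots,g_n)=\g_{\ell;t}$ and $F_{\ell;t}|_{\Trop}$ denotes the evaluation of $F_{\ell;t}$ in the tropical semifield on the $y_i$.  Next, I would invoke the Laurent phenomenon of \cite[Theorem~3.1]{ca1}, which states that $X_{\ell;t}\in\Z[x_1^{\pm1},\dots,x_n^{\pm1},y_1,\dots,y_n]$ (polynomial in the $y_i$ since the bottom block of $\tilde B(t_0)$ is the identity).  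Specializing $x_1=\cdots=x_n=1$ sends $\hat y_j\mapsto y_j$ and $X_{\ell;t}\mapsto F_{\ell;t}(y_1,\dots,y_n)/M$, where $M$ is the monomial in the $y_i$ produced by the tropical denominator.  Since $X_{\ell;t}|_{x=1}\in\Z[y_1,\dots,y_n]$, this already shows $F_{\ell;t}\in M\cdot\Z[y_1,\dots,y_n]$; but $F_{\ell;t}$ is \emph{a priori} a rational function in the $y_i$ with no common factor between numerator and denominator by construction of the recurrence \eqref{eq:F-mut2}, so we conclude that $M=1$ and $F_{\ell;t}\in\Z[u_1,\dots,u_n]$ after relabeling.

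For the non-divisibility claim, I would argue by induction on the distance in $\TT_n$ from $t_0$, using the recurrence \eqref{eq:F-mut2} in the form
\begin{equation*}
F_{k;t'}\cdot F_{k;t} \;=\; \prod_{i=1}^n u_i^{[b_{n+i,k}(t)]_+} F_{i;t}^{[b_{i,k}(t)]_+} \;+\; \prod_{i=1}^n u_i^{[-b_{n+i,k}(t)]_+} F_{i;t}^{[-b_{i,k}(t)]_+}.
\end{equation*}
If some $u_j$ divides $F_{k;t'}$, then $u_j$ divides the right-hand side.  By the inductive hypothesis no $F_{i;t}$ is divisible by $u_j$, so the $u_j$-divisibility of the right-hand side can only come from the explicit monomial factors: this would force both $[b_{n+j,k}(t)]_+>0$ and $[-b_{n+j,k}(t)]_+>0$, which is impossible for a single integer $b_{n+j,k}(t)$.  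Here one must rule out cancellation between the two monomials, which is where the positivity of the coefficients of the $F_{i;t}$ enters — and positivity itself follows from the same induction together with the fact that the quotient by $F_{k;t}$ is a polynomial, as already established.

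The main obstacle is ensuring that no cancellation occurs between the two terms in the numerator of \eqref{eq:F-mut2}, since the only \emph{a priori} guarantee of non-divisibility comes from the explicit monomial prefactors.  Equivalently, one must propagate positivity along with polynomiality in the induction.  In this paper, once the interpretation $F_{\ell;t}=F_M$ of \eqref{eq:F-quiver-cluster} is established in Theorem~\ref{th:g-F-rep} via Euler characteristics of quiver Grassmannians, the whole proposition becomes immediate from the definition \eqref{eq:F-M} (positivity being automatic for Euler-characteristic coefficients in this setting would require a separate justification, but the polynomiality and lack of a $u_i$-factor are visible at a glance from the sum over dimension vectors).
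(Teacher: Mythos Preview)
The paper does not prove this proposition at all: it is simply quoted from \cite[Propositions~3.6, 5.2]{ca4} as background. So there is no ``paper's proof'' to compare against beyond the citation.

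Your sketch, however, has real problems. For polynomiality, invoking the separation formula \eqref{eq:xjt=F/F} is backwards: that formula already presupposes that the $F_{\ell;t}$ are well-defined polynomials. The clean argument (and the one in \cite{ca4}) is that for principal coefficients one has $F_{\ell;t}(y_1,\dots,y_n)=X_{\ell;t}|_{x_1=\cdots=x_n=1}$ essentially by definition, and the Laurent phenomenon for $\tilde B(t_0)$ gives $X_{\ell;t}\in\Z[x_1^{\pm1},\dots,x_n^{\pm1},y_1,\dots,y_n]$ directly. Your line ``$F_{\ell;t}$ is \emph{a priori} a rational function with no common factor between numerator and denominator by construction of the recurrence'' is not justified and is doing all the work in your deduction that $M=1$.

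The non-divisibility argument has a genuine gap that you yourself flag but do not close. In the case $b_{n+j,k}(t)=0$, neither summand on the right of \eqref{eq:F-mut2} carries an explicit $u_j$ factor, and the inductive hypothesis alone does not prevent $u_j$ from dividing their sum through cancellation. Your proposed fix, propagating positivity of the coefficients of $F_{i;t}$ through the induction, does not work: positivity is \emph{not} a consequence of polynomiality plus the exchange relation, and at the time of this paper it was an open conjecture (indeed the paper explicitly notes after Proposition~\ref{pr:F-poly} that it is ``still not known in general whether all these polynomials have positive coefficients''). Your final remark that the quiver-Grassmannian interpretation makes everything immediate is also off: that interpretation is only available for skew-symmetric $B$, it is proved \emph{using} Proposition~\ref{pr:F-poly} as input, and Euler characteristics of quiver Grassmannians can be negative (the paper gives an example with $\chi=-4$).
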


We now fix $\ell$ and $t$, and discuss the dependency of $\g_{\ell;t}^{B;t_0}$
and $F_{\ell;t}^{B;t_0}$ on the initial vertex $t_0$ and the initial exchange matrix~$B$.
More precisely, choose some $k \in [1,n]$, and suppose that
$t_0 \overunder{k}{} t_1$ and $B_1 = \mu_k(B)$.
We will relate the vectors $\g_{\ell;t}^{B;t_0}$ and
$\g_{\ell;t}^{B_1;t_1}$, and the  polynomials $F_{\ell;t}^{B;t_0}$ and
$F_{\ell;t}^{B_1;t_1}$.
This requires some preparation.

Recall that a \emph{semifield} $(\PP,\cdot,+)$
%\Harm{I replaced $\PPP$ by $\PP$,
%and reserved $\PPP$ for projective space}
is an abelian multiplicative group~$(\PP,\cdot)$ endowed with a binary operation of
addition which is commutative, associative, and
distributive with respect to the multiplication in~$\PP$.
With every finite family of indeterminates $u_1, \dots, u_\ell$ one can
associate two semifields: the \emph{universal} semifield $\Qsf(u_1, \dots, u_\ell)$,
and the \emph{tropical} semifield $\Trop(u_1, \dots, u_\ell)$ (cf. \cite[Definitions~2.1, 2.2]{ca4}).
Recall that $\Qsf(u_1, \dots, u_\ell)$ is the set of all rational
functions in $u_1, \dots, u_\ell$ which can be written as
\emph{subtraction-free} rational expressions,
while $\Trop(u_1, \dots, u_\ell)$ is the multiplicative group of Laurent monomials
$u_1^{a_1} \cdots u_\ell^{a_\ell}$ with the addition~$\oplus$ given by
\begin{equation}
\label{eq:tropical-addition}
\prod_j u_j^{a_j} \oplus \prod_j u_j^{b_j} =
\prod_j u_j^{\min (a_j, b_j)} \,.
\end{equation}

Since \eqref{eq:F-mut2} does not involve subtraction, every
$F$-polynomial $F_{\ell;t}^{B;t_0}(u_1, \dots, u_n)$ belongs to
$\Qsf(u_1, \dots, u_n)$ (although it is still not known in general
whether all these polynomials have positive coefficients).
Note that every subtraction-free rational expression $F(u_1, \dots, u_n)$
(in particular, every $F_{\ell;t}^{B;t_0}$)
can be evaluated at any $n$-tuple of elements
$y_1, \dots, y_n$ of an arbitrary semifield~$\PP$.
We denote the result of this evaluation by $F|_\PP (y_i \leftarrow u_i)$.
%For instance, we have
%$$(y_2 + y_1^2 y_2^{-1})|_{\Trop(y_1,y_2)} = y_2^{-1}\,.$$
Using this notation, we denote by
$\h_{\ell;t}^{B;t_0} = (h_1, \dots, h_n)$ the integer
vector given by
\begin{equation}
\label{eq:h}
x_1^{h_1} \cdots x_n^{h_n} = F_{\ell;t}^{B;t_0}|_{\Trop(x_1, \dots, x_n)}
(x_i^{-1} \prod_{j \neq i} x_j^{[-b_{j,i}]_+} \leftarrow u_i).
\end{equation}

\begin{example}
\label{example:A2-h-vectors}
In the situation of Example~\ref{example:A2-F-polys},
the vectors $\h_{\ell;t} = \h_{\ell;t}^{B;t_0}$ are given in
the last column of Table~\ref{table:F-poly-A2}.
In this case, the formula \eqref{eq:h} for the vector $\h_{\ell;t} = (h_1,h_2)$
takes the form
$$x_1^{h_1} x_2^{h_2} = F_{\ell;t}|_{\Trop(x_1,x_2)}
(x_1^{-1}x_2, x_2^{-1}).$$
For example, since $F_{1;t_2} = u_1 u_2+u_1+1$, we obtain
$$F_{1;t_2}|_{\Trop(x_1,x_2)}(x_1^{-1}x_2, x_2^{-1}) =
x_1^{-1} \oplus x_1^{-1}x_2 \oplus 1 = x_1^{-1},$$
hence $\h_{1;t_2} = \twobyone{-1}{0}$.
\end{example}

Next we recall the $Y$-seeds and their mutations (see
\cite[Definitions~2.3, 2.4]{ca4}).
A (labeled)  \emph{$Y$-seed} in a semifield~$\PP$
is a pair $(\yy, B)$, where
\begin{itemize}
\item
$\yy = (y_1, \dots, y_n)$ is an $n$-tuple
of elements of $\PP$, and
\item
$B = (b_{i,j})$ is an $n\!\times\! n$ skew-symmetrizable integer matrix.
\end{itemize}
The \emph{$Y$-seed mutation} at $k \in [1,n]$ transforms
$(\yy, B)$ into a $Y$-seed $\mu_k(\yy, B) = (\yy', B')$,
where $B' = \mu_k(B)$ is given by \eqref{eq:B-mutation},
and the $n$-tuple $\yy' = (y'_1, \dots, y'_n)$
is given by
\begin{equation}
\label{eq:y-mutation}
y'_i =
\begin{cases}
y_k^{-1} & \text{if $i = k$};\\[.05in]
y_i y_k^{[b_{k,i}]_+}
(y_k + 1)^{- b_{k,i}} & \text{if $i \neq k$}.
\end{cases}
\end{equation}

The following result is immediate from
\cite[Proposition~6.8, formulas~(6.26),(6.28)]{ca4}.

\begin{proposition}
\label{pr:g-F-t0-t1}
Suppose $t_0 \overunder{k}{} t_1$ in $\TT_n$, and
the $Y$-seed $(\yy', B_1)$ in $\Qsf(y_1, \dots, y_n)$
is obtained from $(\yy, B)$ by the mutation at~$k$.
Let $h_k$ (resp.~$h'_k$) be the $k$-th component of the
vector $\h_{\ell;t}^{B;t_0}$ (resp. $\h_{\ell;t}^{B_1;t_1}$).
Then the $\g$-vectors
%\[
$\g_{\ell;t}^{B;t_0} = (g_1, \dots, g_n)$ and
%\column{g_1}{g_n} \quad \text{and}\quad
$\g_{\ell;t}^{B_1;t_1} = (g'_1, \dots, g'_n)$
%\column{g'_1}{g'_n}
%\]
are related by
\begin{equation}
\label{eq:g-transition}
g'_j =
\begin{cases}
-g_k  & \text{if $j = k$};\\[.05in]
g_j + [b_{j,k}]_+ g_k
  - b_{j,k} h_k
 & \text{if $j \neq k$}.
\end{cases}
\end{equation}
We also have
\begin{equation}
\label{eq:g-thru-h}
g_k = h_k - h'_k,
\end{equation}
and
\begin{equation}
\label{eq:F-t0-t1}
(y_k + 1)^{h_k}F_{\ell;t}^{B;t_0}(y_1, \dots, y_n)
= (y'_k + 1)^{h'_k}
F_{\ell;t}^{B_1;t_1}(y'_1, \dots, y'_n)\,.
\end{equation}
\end{proposition}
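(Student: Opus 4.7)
I would deduce all three identities directly from the separation-of-additions formula of \cite{ca4}, which in the principal-coefficients setting expresses the cluster variable $X_{\ell;t}$ at the vertex $t$ as
$$X_{\ell;t}(x_1,\dots,x_n;y_1,\dots,y_n) \;=\; x_1^{g_1}\cdots x_n^{g_n}\, F_{\ell;t}^{B;t_0}(\hat y_1,\dots,\hat y_n),\qquad \hat y_j = y_j\prod_i x_i^{b_{i,j}},$$
where $(g_1,\dots,g_n)=\g_{\ell;t}^{B;t_0}$ (with the appropriate Laurent monomial correction in the $y_i$'s coming from the tropical evaluation $F_{\ell;t}^{B;t_0}|_{\Trop(y)}$). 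The key observation is that the same underlying cluster variable admits an analogous expansion with respect to the cluster at $t_1$, producing a second expression using the mutated data $B_1=\mu_k(B)$, $x'_i$, $y'_i$, $(g'_1,\dots,g'_n)=\g_{\ell;t}^{B_1;t_1}$ and $F_{\ell;t}^{B_1;t_1}$. Equating the two expressions and tracking how the $x$- and $y$-mutations transform them will yield \eqref{eq:g-transition}, \eqref{eq:g-thru-h} and \eqref{eq:F-t0-t1} simultaneously.

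\textbf{Execution.} First I substitute the $y$-mutation \eqref{eq:y-mutation} and the $x$-exchange relation $x_k x'_k = y_k\prod_j x_j^{[b_{j,k}]_+} + \prod_j x_j^{[-b_{j,k}]_+}$ into the second expression. Then I use the $\Z^n$-grading defined by $\deg x_i=\e_i$ and $\deg y_j=-\bb_j$: each $\hat y_j$ is homogeneous of degree $0$, so both sides are homogeneous of multidegree $\g_{\ell;t}^{B;t_0}$. Matching $x_j$-exponents for $j\neq k$ produces \eqref{eq:g-transition}, and matching $x_k$-exponents (together with the observation that the exchange binomial in the numerator of $x'_k$ is $x_k$-free) gives $g'_k=-g_k$. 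The correction $-b_{j,k}h_k$ appearing in \eqref{eq:g-transition} arises precisely because the factors $(y_k+1)^{-b_{k,i}}$ that enter $y'_i$ contribute, via the tropical substitution $y_i\mapsto x_i^{-1}\prod_{j\neq i}x_j^{[-b_{j,i}]_+}$ used to define \eqref{eq:h}, a Laurent monomial with $x_j$-exponent $-b_{j,k}h_k$. The identity \eqref{eq:F-t0-t1} is then the non-monomial residue of the same comparison, rescaled by the universal factors $(y_k+1)^{h_k}$ and $(y'_k+1)^{h'_k}$ that correct the tropical denominators in the two versions of the separation formula. Finally, \eqref{eq:g-thru-h} falls out by evaluating \eqref{eq:F-t0-t1} tropically under $y_i\mapsto x_i^{-1}\prod_{j\neq i}x_j^{[-b_{j,i}]_+}$ and comparing the $k$-th exponents on the two sides.

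\textbf{Main obstacle.} The genuinely delicate step is the tropical bookkeeping of the factor $(y_k+1)^{h_k}$. One has to verify that, under the substitution $y_i\mapsto x_i^{-1}\prod_{j\neq i}x_j^{[-b_{j,i}]_+}$ in $\Trop(x_1,\dots,x_n)$, this factor evaluates to exactly the Laurent monomial $x_1^{h_1}\cdots x_n^{h_n}$ built into \eqref{eq:h}, and that the analogous factor on the $t_1$ side gives $x_1^{h'_1}\cdots x_n^{h'_n}$. Showing that these two contributions line up consistently on both sides of the mutation comparison is what binds the $\g$-vector transition in \eqref{eq:g-transition}, the $\h$-vector identity \eqref{eq:g-thru-h}, and the $F$-polynomial recurrence \eqref{eq:F-t0-t1} into a single coherent statement; once this tropical computation is nailed down, the three conclusions are simultaneous consequences of the one equality of cluster variables expanded in two ways.
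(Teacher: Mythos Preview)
The paper does not actually prove this proposition; it simply records it as immediate from \cite[Proposition~6.8, formulas~(6.26),(6.28)]{ca4}. Your proposal therefore goes beyond the paper by attempting a derivation, and your overall strategy---expanding the same cluster variable via the separation formula based at~$t_0$ and at~$t_1$, then comparing---is indeed the method by which \cite{ca4} establishes these identities.

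However, the details of your execution contain a genuine error. In the ``main obstacle'' paragraph you claim that $(y_k+1)^{h_k}$, under the tropical substitution $y_i \mapsto x_i^{-1}\prod_{j\neq i}x_j^{[-b_{j,i}]_+}$, evaluates to the full monomial $x_1^{h_1}\cdots x_n^{h_n}$. This is false: $(y_k+1)^{h_k}$ depends only on $y_k$, and a direct computation gives $(y_k \oplus 1) = x_k^{-1}$, so its tropical image is $x_k^{-h_k}$, not the product over all~$j$. What evaluates to $x_1^{h_1}\cdots x_n^{h_n}$ is $F_{\ell;t}^{B;t_0}$ itself---that is precisely the \emph{definition} \eqref{eq:h} of the $\h$-vector. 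Relatedly, your derivation of \eqref{eq:g-thru-h} by tropically evaluating both sides of \eqref{eq:F-t0-t1} is not as immediate as you suggest: the component $h'_k$ is defined via the tropical substitution built from $B_1$, applied to $F_{\ell;t}^{B_1;t_1}$ with the $y'_i$ treated as \emph{free} variables, whereas the evaluation you describe uses the $B$-substitution applied after expressing the $y'_i$ through the $y_i$ via \eqref{eq:y-mutation}. Reconciling these two tropical evaluations is the nontrivial bookkeeping carried out in \cite[Section~6]{ca4}, and your sketch does not address it.
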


We conclude this section by recalling \cite[Corollary~6.3]{ca4}
that explains why the $\g$-vectors and $F$-polynomials play a
crucial role in the theory of cluster algebras.
Recall that a cluster algebra~$\mathcal A$ is specified by a
choice of a $Y$-seed $(\yy, B)$ in a semifield~$\PP$.
Let $\mathcal F = \Q \PP(x_1, \dots, x_n)$ be the field of rational functions
in commuting independent variables $x_1, \dots, x_n$ over the
quotient field $\Q\PP$ of the integer group ring $\Z \PP$ of the
multiplicative group $\PP$.
Then each $\ell$ and $t$ as above gives rise to a \emph{cluster
variable} $x_{\ell;t} \in \mathcal F$ given by
\begin{equation}
\label{eq:xjt=F/F}
x_{\ell;t} = \frac{F_{\ell;t}^{B;t_0}|_{\mathcal F}(\hat y_1, \dots, \hat y_n)}
{F_{\ell;t}^{B;t_0}|_\PP (y_1, \dots, y_n)} \, x_1^{g_1} \cdots
x_n^{g_n} \,,
\end{equation}
where $(g_1, \dots, g_n) = \g_{\ell;t}^{B;t_0}$, and the elements
$\hat y_1, \dots, \hat y_n \in \mathcal F$ are given by
\begin{equation}
\label{eq:yhat-0}
\hat y_j = y_j \prod_i x_i^{b_{i,j}}.
\end{equation}
Furthermore, all cluster variables are of this form, and $\mathcal
A$ is the $\Z \PP$-subalgebra of $\mathcal F$ generated by all the
$x_{\ell;t}$.

\section{$F$-polynomials of quiver representations}
\label{sec:F-QP}

In this section we use the terminology on quiver representations
from the introduction.
We work with a quiver $Q = Q(B)$ (see \eqref{eq:QB}).
Our goal is to develop some basic properties of the $F$-polynomial
$F_M(u_1, \dots, u_n)$ associated to any representation~$M$ of~$Q$
in accordance with \eqref{eq:F-M}.

\begin{proposition}
\label{pr:F-ct-ht}
Each polynomial $F_M (u_1, \dots, u_n)$ has constant term~$1$.
Furthermore, $F_M (u_1, \dots, u_n)$ contains the monomial
$\prod_{i=1}^n u_i^{\dim M(i)}$ with coefficient~$1$, and it is
divisible by all the other occurring monomials.
\end{proposition}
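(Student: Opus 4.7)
The plan is to read off all three assertions directly from the definition \eqref{eq:F-M} by analyzing the quiver Grassmannians $\Gr_\e(M)$ at the extreme dimension vectors and restricting the range of $\e$'s that can contribute.

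First, I would observe that if a dimension vector $\e = (e_1,\dots,e_n)$ satisfies $\chi(\Gr_\e(M)) \neq 0$, then in particular $\Gr_\e(M)$ is nonempty, which forces $0 \le e_i \le \dim M(i)$ for every $i$ (since each $N(i)$ is a subspace of $M(i)$ of dimension $e_i$). Consequently, every monomial $\prod_i u_i^{e_i}$ that actually appears in $F_M$ divides the monomial $\prod_{i=1}^n u_i^{\dim M(i)}$; this is the divisibility statement.

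Next, I would compute the two extreme contributions. For $\e = 0$ the only subrepresentation of dimension vector $0$ is $N = 0$, so $\Gr_0(M)$ is a single point and $\chi(\Gr_0(M)) = 1$. This gives $F_M(0,\dots,0) = 1$, i.e.\ the constant term equals $1$. Similarly, for $\e = \dd_M$ the only subrepresentation of full dimension vector is $N = M$ itself (a subspace of the correct dimension in each $M(i)$ must be all of $M(i)$, and the compatibility condition with the arrows is then automatic), so $\Gr_{\dd_M}(M)$ is again a point and $\chi(\Gr_{\dd_M}(M)) = 1$. Combined with the divisibility observation, this shows $\prod_i u_i^{\dim M(i)}$ is the unique monomial of maximal degree and occurs with coefficient $1$.

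There is essentially no obstacle here: all three assertions follow from the combinatorial fact that a subrepresentation has dimension vector componentwise bounded by $\dd_M$, together with the triviality of $\Gr_\e(M)$ at the two extremes $\e = 0$ and $\e = \dd_M$. The only minor point worth spelling out is that the unique point in each extreme quiver Grassmannian indeed contributes $\chi = 1$ (a one-point variety has Euler characteristic $1$), and that no other $\e$ can equal $\dd_M$ componentwise, so the divisibility claim and the uniqueness of the top monomial come together.
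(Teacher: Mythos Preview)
Your proof is correct and follows the same approach as the paper, which simply observes that $\Gr_\e(M)$ consists of a single point for $\e = 0$ and $\e = \dd_M$. You have merely spelled out the divisibility argument (that contributing $\e$'s satisfy $0 \le e_i \le \dim M(i)$) that the paper leaves implicit.
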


\begin{proof}
It is enough to notice that, for $\e = (0, \dots, 0)$ or $\e = \dd_M$
(see \eqref{eq:dim-vector}),
the quiver Grassmannian $\Gr_\e(M)$ consists of one point.
\end{proof}

\begin{proposition}
\label{pr:F-direct-sums}
For all representations $M'$ and $M''$ of~$Q$, we have
\begin{equation}
\label{eq:F-direct-sums}
F_{M'\oplus M''} = F_{M'} F_{M''}.
\end{equation}
\end{proposition}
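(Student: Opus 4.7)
The plan is to exploit a natural $\C^*$-action on $\Gr_\e(M' \oplus M'')$ whose fixed locus decomposes as a disjoint union of products $\Gr_{\e'}(M') \times \Gr_{\e''}(M'')$, and then to apply the principle that the Euler--Poincar\'e characteristic of a complex algebraic variety carrying a $\C^*$-action equals the Euler characteristic of its fixed-point set.

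First I would introduce the action of $t \in \C^*$ on $M = M' \oplus M''$ by $t \cdot (m', m'') = (m', t\, m'')$; this commutes with every arrow map, so it preserves each quiver Grassmannian $\Gr_\e(M)$. The straightforward but key observation is that a subrepresentation $N \subseteq M$ is $\C^*$-stable if and only if $N = (N \cap M') \oplus (N \cap M'')$: indeed, stability forces each $N(i)$ to decompose into weight spaces, and only weights $0$ and $1$ occur. Conversely, for any pair of subrepresentations $N' \subseteq M'$ and $N'' \subseteq M''$, the sum $N' \oplus N''$ is visibly $\C^*$-stable and a subrepresentation of $M$. This identifies the fixed locus as
\begin{equation*}
\Gr_\e(M' \oplus M'')^{\C^*} \;=\; \bigsqcup_{\e' + \e'' = \e} \Gr_{\e'}(M') \times \Gr_{\e''}(M''),
\end{equation*}
where the disjoint union is a decomposition into closed (hence clopen) pieces indexed by the dimension vector of the intersection with $M'$.

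Next I would invoke the classical fact (see e.g.\ \cite[Section~4.5]{fulton}) that for a complex algebraic variety $X$ equipped with an algebraic $\C^*$-action, $\chi(X) = \chi(X^{\C^*})$. Combined with the multiplicativity of $\chi$ on products, this gives
\begin{equation*}
\chi(\Gr_\e(M' \oplus M'')) \;=\; \sum_{\e' + \e'' = \e} \chi(\Gr_{\e'}(M')) \, \chi(\Gr_{\e''}(M'')).
\end{equation*}
Multiplying by $\prod_i u_i^{e_i} = \prod_i u_i^{e'_i} \prod_i u_i^{e''_i}$ and summing over $\e$, the double sum factors as the product $F_{M'}(u_1,\dots,u_n)\, F_{M''}(u_1,\dots,u_n)$, yielding \eqref{eq:F-direct-sums}.

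The only genuinely delicate step is the identification of the fixed locus, i.e.\ the claim that every $\C^*$-stable subrepresentation splits as a direct sum compatible with $M = M' \oplus M''$; the rest is formal. Once the weight-space decomposition argument is in hand, the proof is essentially bookkeeping.
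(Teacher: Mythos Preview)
Your proof is correct and follows essentially the same approach as the paper: both introduce the $\C^*$-action on $M'\oplus M''$ scaling one summand, identify the fixed locus of the induced action on $\Gr_\e(M'\oplus M'')$ with $\bigsqcup_{\e'+\e''=\e}\Gr_{\e'}(M')\times\Gr_{\e''}(M'')$, and then apply $\chi(X)=\chi(X^{\C^*})$ together with multiplicativity of $\chi$ on products. The only cosmetic difference is which summand you scale and which reference you cite for the fixed-point formula (the paper cites \cite{bb}).
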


\begin{proof}
We use the following well-known property of the Euler-Poincar\'e characteristic:
if a complex torus~$T$ acts algebraically on a variety~$X$, then
$\chi(X) = \chi(X^T)$, where $X^T$ is the set of $T$-fixed points (see for example \cite{bb}).
Take $X = \Gr_\e (M' \oplus M'')$, and consider the action of $T = \C^*$
on~$X$ induced by the $T$-action on $M' \oplus M''$ given by
$$t \cdot (m', m'') = (tm', m'') \quad (m' \in M', \,\, m'' \in M'').$$
Then a point $N \in X$ is $T$-fixed if and only if the submodule
$N \subseteq M' \oplus M''$ splits into $N = N' \oplus N''$ for
some $N' \subseteq M'$ and $N'' \subseteq M''$.
Thus, we have
$$X^T = \bigsqcup_{\e' + \e'' = \e} (\Gr_{\e'} (M') \times
\Gr_{\e''} (M'')),$$
and so
$$\chi(\Gr_\e (M' \oplus M'')) = \sum_{\e' + \e'' = \e}
\chi(\Gr_{\e'} (M'))\chi(\Gr_{\e''} (M'')),$$
implying \eqref{eq:F-direct-sums}.
\end{proof}

To state our next result, we recall the maps $\alpha_k$ and $\beta_k$ in
\eqref{eq:in-out-maps}
(the map $\gamma_k$ is undefined for arbitrary quiver
representations).
We will denote these maps $\alpha_{k;M}$ and $\beta_{k;M}$ if
necessary to stress the dependency of a representation~$M$.
We denote by $\h_{M} = (h_1, \dots, h_n)$ the integer
vector given by
\begin{equation}
\label{eq:h-ker-beta}
h_k  = h_k(M) = - \dim \ker \beta_{k;M}.
\end{equation}

Now assume that $F_M$ belongs to
$\Qsf(u_1, \dots, u_n)$ (the semifield of subtraction-free rational expressions),
hence can be evaluated in an arbitrary semifield \footnote{It is conceivable that this condition holds for
arbitrary quiver representations.
%In any case, it is satisfied for the class of QP-representations
%studied later.
}(see the
discussion after Proposition~\ref{pr:F-poly}).
The definition \eqref{eq:h-ker-beta} is then justified by the
following analog of \eqref{eq:h}.

\begin{proposition}
\label{pr:h-ker-beta}
Under the assumption that $F_M \in \Qsf(u_1, \dots, u_n)$,
the components of the vector $\h_{M}$ appear as the exponents in
the tropical evaluation
\begin{equation}
\label{eq:h-qp}
x_1^{h_1} \cdots x_n^{h_n} = F_{M}|_{\Trop(x_1, \dots, x_n)}
(x_i^{-1} \prod_{j \neq i} x_j^{[-b_{j,i}]_+} \leftarrow u_i).
\end{equation}
\end{proposition}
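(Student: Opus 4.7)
The plan is to reduce the problem to a one-variable tropical computation and identify the extremal exponent as $-\dim \ker \beta_{k;M}$ via a lower bound coming from the constraints defining subrepresentations.

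First, a direct computation shows the substitution sends each monomial $\prod_i u_i^{e_i}$ in $F_M$ to $\prod_k x_k^{\varphi_k(\e)}$, where
\[
\varphi_k(\e) = -e_k + \sum_{i\neq k} e_i[-b_{k,i}]_+.
\]
To extract the $x_k$-exponent of the tropical value, I would specialize $x_j = 1$ for all $j \neq k$, reducing the problem to computing the tropical value of the one-variable element $\tilde F_k(x_k) := F_M(x_k^{a_1}, \ldots, x_k^{a_n}) \in \Qsf(x_k)$, where $a_k = -1$ and $a_i = [-b_{k,i}]_+$ for $i \neq k$.

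Next, I would establish the bound $\varphi_k(\e) \geq -\dim \ker \beta_{k;M}$ for every $\e$ with $\chi(\Gr_\e(M)) \neq 0$. By \eqref{eq:QB} and the skew-symmetry of $B$, $[-b_{k,i}]_+$ equals the number of arrows from $k$ to $i$ in $Q(B)$; so for any subrepresentation $N \subseteq M$ with $\dd_N = \e$,
\[
\sum_{i\neq k} e_i[-b_{k,i}]_+ = \dim \bigoplus_{b:\,t(b)=k} N(h(b)) \geq \dim \beta_{k;M}(N(k)) \geq \dim N(k) - \dim\ker\beta_{k;M},
\]
which rearranges to the desired bound. Moreover, equality is attained at $\e_* := (\dim\ker\beta_{k;M})\mathbf{e}_k$: the only subrepresentation of dimension vector $\e_*$ has $N(k) = \ker\beta_{k;M}$ and $N(i) = 0$ for $i \neq k$, so $\Gr_{\e_*}(M)$ is a single point and contributes the monomial $x_k^{-\dim\ker\beta_{k;M}}$ to $\tilde F_k(x_k)$ with coefficient $+1$.

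Finally, since $\tilde F_k \in \Qsf(x_k)$, writing $\tilde F_k = A/B$ with $A,B$ subtraction-free Laurent polynomials gives $\tilde F_k(t) \sim c\, t^{m_*}$ as $t\to 0^+$, with $c > 0$ and $m_*$ the tropical value. The lower bound forces $m_* \geq -\dim\ker\beta_{k;M}$, while the contribution of $\e_*$ shows that the coefficient of $t^{-\dim\ker\beta_{k;M}}$ in the expansion $\tilde F_k(t) = \sum_m d_m t^m$ equals $1 + \sum_{\e \neq \e_*,\ \varphi_k(\e) = -\dim\ker\beta_{k;M}} \chi(\Gr_\e(M))$. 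The hard part is verifying that this sum is nonzero, so that $m_* = -\dim\ker\beta_{k;M}$ rather than some strictly larger value. My plan for this step is a geometric argument showing $\chi(\Gr_\e(M)) = 0$ for each $\e \neq \e_*$ in the sum: the equality cases from the previous step force $N(k)$ to be uniquely determined by $(N(i))_{i \neq k}$ as the preimage $\beta_{k;M}^{-1}(\bigoplus_{b:\,t(b) = k} N(h(b)))$, so $\Gr_\e(M)$ embeds as a subvariety of $\prod_{i \neq k} \Gr_{e_i}(M(i))$, on which one can exhibit a $\C^*$-action with empty fixed locus. Once non-cancellation is established, $m_* = -\dim\ker\beta_{k;M}$, and running this argument for each $k$ yields the proposition.
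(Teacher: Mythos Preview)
Your setup and the lower bound $\varphi_k(\e)\ge h_k$ match the paper exactly, as does the identification of $\e_*=(\dim\ker\beta_{k;M})\,\e_k$ as the point with $\Gr_{\e_*}(M)$ a singleton. The divergence is in the last step, and there you have a genuine gap.

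Your plan for non-cancellation is to show $\chi(\Gr_\e(M))=0$ for every $\e\neq\e_*$ with $\varphi_k(\e)=h_k$ via ``a $\C^*$-action with empty fixed locus'' on the image of $\Gr_\e(M)$ in $\prod_{i\neq k}\Gr_{e_i}(M(i))$. But you never say what this action is, and there is no obvious candidate: the quiver representation $M$ is completely arbitrary, so there is no torus symmetry to exploit on the ambient product of Grassmannians that would preserve the closed conditions cutting out $\Gr_\e(M)$. Absent a concrete construction, this step is just a hope, and I do not see how to make it work.

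The paper closes the argument in one line by a different, purely combinatorial observation: $\e_*$ is a \emph{vertex} of the Newton polytope of $F_M$. Indeed, the polytope lies in the positive orthant, and among points on the $e_k$-axis (a face of the polytope, since the linear functional $\sum_{i\neq k}e_i$ is minimized there) the coordinate $e_k$ is bounded by $\dim\ker\beta_{k;M}$, with the bound attained at $\e_*$. The paper then invokes the general fact (Lemma~\ref{lem:tropical-newton}) that for $F\in\Qsf(u_1,\dots,u_n)$ the tropical evaluation is unchanged if one replaces $F$ by the sum, with coefficient~$1$, of the monomials at the Newton vertices. Thus the tropical sum contains the term coming from $\e_*$, whose $x_k$-exponent is $h_k$, and the tropical minimum is therefore $\le h_k$. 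Combined with your lower bound this finishes the proof with no Euler-characteristic computation at all. In short: the cancellation problem you flagged is real, but the right cure is the Newton-polytope lemma, not a torus action.
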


\begin{proof}
First a general lemma following easily from the definition
of a tropical semifield (see \eqref{eq:tropical-addition}).

\begin{lemma}
\label{lem:tropical-newton}
If $F(u_1, \dots, u_n)$ is a Laurent polynomial belonging to
$\Qsf(u_1, \dots, u_n)$ then the result of any evaluation of~$F$
in a tropical semifield does not change if we replace~$F$ with the
sum of the terms (taken with coefficient~$1$) corresponding to the
vertices of its Newton polytope.
\end{lemma}

Now suppose that $N \in \Gr_\e(M)$, i.e., $N$ is a
subrepresentation of~$M$ with $\dd_N = \e$.
Then the exponent of~$x_k$ in the tropical evaluation
$$(u_1^{e_1} \cdots u_n^{e_n})|_{\Trop(x_1, \dots, x_n)}
(x_i^{-1} \prod_{j \neq i} x_j^{[-b_{j,i}]_+} \leftarrow u_i)$$
can be rewritten as
$$-e_k + \sum_{i \neq k} [-b_{k,i}]_+ e_i
= -e_k + \sum_{i \neq k} [b_{i,k}]_+ e_i = \dim N_{\rm out}(k) -
\dim N(k)$$
(we used the fact that $B$ is skew-symmetric).
Note that
\begin{align*}
\dim N_{\rm out}(k) - \dim N(k) & \geq \dim \beta(N(k)) - \dim N(k)\\
& = - \dim (N(k) \ \cap \ \ker \beta_k)\\
& \geq - \dim \ker \beta_k = h_k.
\end{align*}
In view of Lemma~\ref{lem:tropical-newton}, this implies that~$h_k$
does not exceed the exponent of~$x_k$ in the right hand side of
\eqref{eq:h-qp}.

Now take $\e = - h_k \e_k$ (recall that
$\e_k$ stands for the $k$-th unit vector in $\Z^n$), and
notice that $\Gr_\e(M)$ consists of one point $N$ (with $N(i) =
\{0\}$ for $i \neq k$, and $N(k) = \ker \beta_k$), and that $\e$ is
obviously a vertex of the Newton polytope of $F_M$.
This implies that the exponent of~$x_k$ in the right hand side of
\eqref{eq:h-qp} does not exceed $h_k$,
completing the proof of Proposition~\ref{pr:h-ker-beta}.
\end{proof}
\begin{proposition}\label{pr:smoothness-for-general-reps}
Suppose that $Q$ is a quiver with no oriented cycles, and $M$
is a general representation of dimension  ${\bf d}=(d_1,\dots,d_n)$.
Then every quiver Grassmannian $\Gr_{\bf e}(M)$ is smooth.
In particular, this is the case if $M$ is rigid,  that is, $\Ext^1(M,M)=0$.
\end{proposition}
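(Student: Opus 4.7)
The plan rests on tangent-obstruction theory for quiver Grassmannians: for any representation $M$ of $Q$ and any $N \in \Gr_\e(M)$, the Zariski tangent space at $N$ is canonically identified with $\Hom_Q(N, M/N)$, and obstructions to extending first-order deformations lie in $\Ext^1_Q(N, M/N)$. Consequently $\Gr_\e(M)$ is smooth at $N$, of local dimension $\dim \Hom_Q(N, M/N)$, whenever $\Ext^1_Q(N, M/N) = 0$. Setting up this tangent-obstruction formalism cleanly is the main technical step; once it is in place, both parts of the statement become standard consequences.

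For the ``in particular'' part, suppose $M$ is rigid. Since $Q$ has no oriented cycles, the path algebra $\C Q$ is hereditary, so $\Ext^i_Q = 0$ for all $i \geq 2$. For any subrepresentation $N \subseteq M$, applying $\Hom_Q(M, -)$ to the short exact sequence $0 \to N \to M \to M/N \to 0$ and using $\Ext^2_Q(M, N) = 0$ gives $\Ext^1_Q(M, M/N) = 0$; applying $\Hom_Q(-, M/N)$ to the same sequence and using $\Ext^2_Q(M/N, M/N) = 0$ then yields $\Ext^1_Q(N, M/N) = 0$. Since this holds for every $N \in \Gr_\e(M)$, the Grassmannian is smooth everywhere.

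For the general statement, I would introduce the incidence variety
\[
Z = \left\{(M, N) \in \Rep_\dd(Q) \times \prod_{i=1}^n \Gr_{e_i}(\C^{d_i}) : N \text{ is a subrepresentation of } M\right\}.
\]
The projection of $Z$ onto the Grassmannian factor is a vector bundle: its fiber over $N$ is the linear subspace of $\Rep_\dd(Q)$ cut out by the conditions $M(a)(N(t(a))) \subseteq N(h(a))$ over all arrows $a$, and the dimension of this subspace depends only on $\e$ and $\dd$. Hence $Z$ is smooth and irreducible. Applying generic smoothness in characteristic zero to the other projection $p : Z \to \Rep_\dd(Q)$ shows that $p^{-1}(M) = \Gr_\e(M)$ is smooth for $M$ in a dense open subset of the image of $p$; outside the image the fiber is empty, hence vacuously smooth. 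Intersecting the resulting dense opens over the finitely many $\e$ with $0 \leq e_i \leq d_i$ yields a dense open subset of ``general'' $M$ for which every quiver Grassmannian $\Gr_\e(M)$ is simultaneously smooth.
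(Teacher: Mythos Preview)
Your argument for the general case via the incidence variety $Z$ and generic smoothness is essentially identical to the paper's proof (which cites Schofield and the second Bertini theorem), so that part coincides.

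Where you diverge is in the rigid case. The paper treats ``$M$ rigid'' simply as a special instance of ``$M$ general'': since $\Ext^1(M,M)=0$ forces the $\GL_\dd$-orbit of $M$ to be open (hence dense) in the irreducible space $\Rep_\dd(Q)$, the generic-smoothness conclusion already applies to $M$. You instead prove smoothness pointwise, using the tangent--obstruction identification $T_N\Gr_\e(M)\cong\Hom_Q(N,M/N)$ together with the hereditary vanishing $\Ext^2_Q=0$ to deduce $\Ext^1_Q(N,M/N)=0$ for every subrepresentation $N$. This is correct and has the advantage of being self-contained and giving more information (it shows the Grassmannian is smooth of dimension $\langle \e,\dd-\e\rangle$ at each point, not merely smooth); it also makes clear exactly where the acyclicity hypothesis enters. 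The paper's route is shorter once the general case is in hand, and avoids setting up the deformation-theoretic identification of the tangent and obstruction spaces, which you acknowledge but do not carry out. Both approaches are standard; yours is the one typically found in the quiver-Grassmannian literature (e.g.\ Caldero--Reineke), while the paper opts for the minimal argument.
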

This proposition follows from the results of Schofield (\cite[\S3]{schofield}).
For the convenience of the reader we give an outline of the proof.
\begin{proof}
If $M$ is a representation with dimension vector ${\bf d}$,
then we may identify $M(i)$ with $\C^{d_i}$ by choosing a basis in $M(i)$ for all $i$.
Then $M$ is represented as an element
$$
(a_M)_{a\in Q_1}\in \Rep_{\bf d}(Q):=\prod_{a\in Q_1} K^{d_{ha}\times d_{ta}}.
$$
The group $\GL_{\bf d}=\prod_{i=1}^n \GL_{d_i}(\C)$ acts on $\Rep_{\bf d}(Q)$
by base change. This way, isomorphism classes of ${\bf d}$-dimensional representations
correspond to $\GL_{\bf d}$-orbits in $\Rep_{\bf d}(Q)$.
For a dimension vector ${\bf e}=(e_1,\dots,e_n)$, let
$$Z_{{\bf e},{\bf d}}\subseteq \Rep_{{\bf d}}(Q)\times \prod_{i=1}^n \Gr_{e_i}(\C^{d_i})$$
be defined as the set of all
$(M,(N_1,N_2,\dots,N_n))$
for which $a_M(N_{ta})\subseteq N_{ha}$ for all  $a\in Q_1$.
We have natural projections $p:Z_{{\bf e},{\bf d}}\to \Rep_{\bf d}(Q)$
and $q:Z_{{\bf e},{\bf d}}\to \prod_{i=1}^n \Gr_{e_i}(\C^{d_i})$.
One can show that the projection $q$ makes $Z_{{\bf e},{\bf d}}$ into
a vector bundle over the product of Grassmannians, hence $Z_{{\bf
e},{\bf d}}$ is smooth.
Now the quiver Grassmannian $\Gr_{\bf e}(M)$ is
equal to the fiber $p^{-1}(M)$. If $M$ is a general representation of dimension ${\bf d}$,
then the fiber  $p^{-1}(M)$ is smooth by
 the second Bertini Theorem  (\cite[Chapter II, \S6.2, Theorem 2]{Shafarevich}).
\end{proof}

If $Q$ is a quiver without oriented cycles,
and $M$ is indecomposable and rigid, then all
the quiver Grassmannians are smooth by the proposition above.
It was shown in \cite{ck,cr}\footnote{It was pointed out in \cite{nakajima}
that the proof in \cite{cr} contains a gap.}, that the
$F$-polynomial of~$M$ has nonnegative coefficients in this case.
The next two examples show that in general, the coefficients can be negative,
and the quiver Grassmannian may be singular.

\begin{example}
Consider the quiver $Q$ given by
$$
\xymatrix{
1\ar@<.9ex>[rr]^{a_1,a_2,a_3,a_4}
\ar@<.3ex>[rr]
\ar@<-.3ex>[rr]
\ar@<-.9ex>[rr] & & 2}
$$
and let $M$ be a general representation of $Q$ of dimension $\dd = (3,4)$.
%$\dd=\left[\begin{smallmatrix}
%3\\4\end{smallmatrix}\right]$.
The arrows $a_{1},\dots,a_4$ act in~$M$ as four linear maps $\C^3 \to \C^4$ in general position.
Choose $\e= (1,3)$.
%\left[\begin{smallmatrix} 1\\3\end{smallmatrix}\right]$.
Since~$M$ is in general position, $\Gr_\e(M)$
is smooth by the discussion above. %\Andrei{More detail on this map??}
Now the first projection $\Gr_\e(M) \to \Gr_1(\C^3) = \PPP^2$
identifies $\Gr_\e(M)$ with the projective curve $C$ given by the equation
$$
\det(a_1 (m), \ a_2 (m),\ a_3 (m),  \ a_4 (m))=0 \quad (m \in \C^3).
$$
Since~$C$ is a smooth curve of degree~$4$, it has genus $g=(4-1)(4-2)/2=3$ and Euler characteristic
$2-2g=2-2\cdot 3=-4$ (see \cite[Chapter IV, 2.3]{Shafarevich}). So we have
$$
\chi(\Gr_\e(M))=-4.
$$
\end{example}

\begin{example}
\label{ex:nonsmooth}
Consider the quiver $Q$ given by
$$
\xymatrix{
& 1\ar^a[rd] &\\
3\ar^c[ru] & & 2\ar^b[ll]
}
$$
%with the potential $S=cba$
Let ${M}_1,{M}_2, {M}_3$ be the
indecomposable representations of $Q$ of dimensions
$(0,1,1)$, $(1,0,1)$, and $(1,1,0)$,
%$$
%\left[\begin{smallmatrix}
%0\\
%1\\
%1\end{smallmatrix}\right],
%\left[\begin{smallmatrix}
%1\\
%0\\
%1\end{smallmatrix}\right],
%\left[\begin{smallmatrix}
%1\\
%1\\
%0\end{smallmatrix}\right]
%$$
respectively, and $M ={M}_1\oplus {M}_2\oplus {M}_3$.
It is immediate from the definition \eqref{eq:F-M} that
\begin{align*}
& F_{{M}_1}(u_1,u_2,u_3)=1+u_3+u_2u_3,\\
& F_{{M}_2}(u_1,u_2,u_3)=1+u_1+u_1u_3,\\
& F_{{M}_3}(u_1,u_2,u_3)=1+u_2+u_1u_2.
\end{align*}
By Proposition~\ref{pr:F-direct-sums}, we have
$
F_{M}=F_{{M}_1}F_{{M}_2}F_{{M}_3}
$.
In particular, the coefficient of $u_1u_2u_3$ in $F_{{M}}$ is $4$.
Thus, $\chi(\Gr_{(1,1,1)}(M)) = 4$.

Geometrically this result can be seen as follows.
The variety $\Gr_{(1,1,1)}(M)$ is a subvariety in
$\Gr_1 (M(1)) \times \Gr_1 (M(2)) \times \Gr_1 (M(3)) =
\PPP^1 \times \PPP^1 \times \PPP^1$.
Let $P = (P(1), P(2), P(3)) \in \PPP^1 \times \PPP^1 \times \PPP^1$
be given by
$$P(1) = \ker a_M = {\rm im} \ c_M, \,\,
P(2) = \ker b_M = {\rm im} \ a_M, \,\,
P(3) = \ker c_M = {\rm im} \ b_M \ ;$$
then $\Gr_{(1,1,1)}(M)$ consists of all points
$N \in \PPP^1 \times \PPP^1 \times \PPP^1$
such that $N$ and $P$ have at least two common components.
Thus, $\Gr_{(1,1,1)}(M)$ is the union of three copies of $\PPP^1$
meeting at a single point~$P$.
In other words, $\Gr_{(1,1,1)}(M)$ is the disjoint union of three
copies of $\Affine^1$ and the single point $\{P\}$, so
$$
\chi(\Gr_e(N))=3\chi(\Affine^1)+\chi(\{P\})=3\cdot 1+1=4.
$$
Note that $\Gr_{(1,1,1)}(M)$ is singular at $P$.
\end{example}

\section{Background on quivers with potentials and their representations}
\label{sec:QP-background}

Let $Q = (Q_0, Q_1, h, t)$ be a quiver (see Introduction).
%Section~\ref{sec:intro}).
We denote by~$R$ the \emph{vertex span} of~$Q$, that is, the
commutative algebra over $\C$ with the basis $\{e_i: i \in Q_0\}$
and the multiplication given by $e_i e_j = \delta_{i,j} e_i$.
The \emph{arrow span} of~$Q$ is the finite-dimensional
$R$-bimodule~$A$ with the $\C$-basis identified with $Q_1$,
and the $R$-bimodule structure given by
\begin{equation}
\label{eq:A-R-bimodule}
A_{i,j} = e_i A e_j = \bigoplus_{a:j \to i} \C a.
\end{equation}
The \emph{complete path algebra} of~$Q$ is defined as
$$
R\langle\langle A \rangle\rangle=\prod_{d=0}^\infty A^{\otimes_R d}.
$$
Thus, the elements of $R\langle\langle A \rangle\rangle$ are
(possibly infinite) $\C$-linear combinations of paths in~$Q$; note
that by the convention \eqref{eq:A-R-bimodule} all the paths
are traced in the right-to-left order.
We view $R\langle\langle A \rangle\rangle$ as a topological
algebra with respect to the ${\mathfrak m}$-adic topology, where
the (two-sided) ideal ${\mathfrak m} \subset R\langle\langle A\rangle\rangle$
is given by
\begin{equation}
\label{eq:max-ideal}
{\mathfrak m}  = \prod_{d=1}^\infty A^{\otimes_R d}.
\end{equation}

A \emph{potential} on~$Q$ is an element
$S \in {\mathfrak m}_{\rm cyc} = \bigoplus_{i \in Q_0}
{\mathfrak m}_{i,i}$, i.e., a possibly infinite linear combination
of cyclic paths in $R\langle\langle A \rangle\rangle$.
We view potentials up to cyclical equivalence defined as follows:
two potentials $S$ and $S'$ are \emph{cyclically equivalent}
if $S - S'$ lies in the closure of the span of
all elements of the form $a_1 \cdots a_d - a_2 \cdots a_d a_1$,
where $a_1 \cdots a_d$ is a cyclic path.

For any arrow~$a \in Q_1$, the \emph{cyclic
derivative} $\partial_a$ is the continuous linear map
${\mathfrak m}_{\rm cyc} \to R\langle\langle A\rangle\rangle_{t(a),h(a)}$
acting on cyclic paths by
\begin{equation}
\label{eq:cyclic-derivative}
\partial_a (a_1 \cdots a_d) =
\sum_{p: a_p = a} a_{p+1} \cdots a_d a_1 \cdots a_{p-1}.
\end{equation}
The \emph{Jacobian ideal} $J(S)$ of a potential~$S$ is the closure of
the (two-sided) ideal in $R\langle\langle A\rangle\rangle$
generated by the elements $\partial_a (S)$ for all $a \in Q_1$.
We call the quotient $R\langle\langle A\rangle\rangle/J(S)$
the \emph{Jacobian algebra} of~$S$, and denote
it by ${\mathcal P}(Q,S)$ or ${\mathcal P}(A,S)$.

The cyclic derivatives of a potential~$S$ can be expressed in
terms of another important family of elements
$\partial_{ba}(S) \in R\langle\langle A\rangle\rangle$
associated with pairs of arrows $a, b \in Q_1$ such that $h(a) = t(b)$.
Namely, the definition of a continuous linear map
$$
\partial_{ba}: {\mathfrak m}_{\rm cyc}\to R\langle\langle
A\rangle\rangle_{t(a),h(b)}.
$$
is similar to \eqref{eq:cyclic-derivative}:
replacing if necessary a potential~$S$ with a
cyclically equivalent one, we can assume that
no cyclic path occurring in~$S$ starts with an arrow~$a$;
for every such cyclic path
$a_1 \cdots a_d$, we set
\begin{equation}
\label{eq:partial-ba}
\partial_{ba} (a_1 \cdots a_d) =
\sum_{\nu: a_{\nu-1} = b, a_\nu = a} a_{\nu+1} \cdots a_d a_1 \cdots a_{\nu-2}.
\end{equation}
An easy check shows that, for any $b \in Q_1$, we have
\begin{equation}
\label{eq:partial-ba-products}
\sum_{a: h(a) = t(b)} a \cdot \partial_{ba}(S) =
\sum_{c: t(c) = h(b)} \partial_{cb}(S) \cdot c
= \partial_b (S).
\end{equation}

A (decorated) representation of a quiver with potential $(Q,S)$
(QP for short) is a pair $\M = (M,V)$, where $M$ is a
finite-dimensional ${\mathcal P}(Q,S)$-module, and $V$ is
a finite-dimensional $R$-module.
A more concrete description was given in the introduction
%Section~\ref{sec:intro} above
(see \cite[Section~10]{dwz}): $V$ is simply a collection
$(V(i))_{i \in Q_0}$ of finite-dimensional vector spaces, while
$M = (M(i))_{i \in Q_0}$ is a representation of~$Q$
annihilated by ${\mathfrak m}^N$ for $N \gg 0$, and by
all cyclic derivatives of~$S$.
In view of \eqref{eq:partial-ba-products}, the latter relations
are equivalent to \eqref{eq:triangle-relations}, where the map
$\gamma_k$ in the triangle \eqref{eq:triangle} is defined as
follows: for each $a, b \in Q_1$ with $h(a) = t(b) = k$, the
component $\gamma_{a,b}: M(h(b)) \to M(t(a))$ of $\gamma_k$
is given by \eqref{eq:gamma-ab}.
%\begin{equation}
%\label{eq:gamma-ab}
%\gamma_{a,b} = (\partial_{ba} S)_M.
%\end{equation}
We can also express $\gamma_k$ in  matrix form: set
\begin{equation}
\label{eq:arrows-thru-k}
\{a_1, \dots, a_r\} = \{a \in Q_1: h(a) = k\}, \quad
\{b_1, \dots, b_s\} = \{b \in Q_1: t(b) = k\},
\end{equation}
and let $H_k(S)$ be the $r \times s$ matrix whose $(p,q)$ entry is
$\partial_{b_q a_p} S$; then the action of $\gamma_k$ in~$M$ is
given by the matrix
\begin{equation}
\label{eq:gamma-matrix}
\gamma_k = (H_k(S))_M.
\end{equation}

In what follows, we refer to a decorated representation $\M = (M,V)$ of a QP
$(Q,S)$ as a \emph{QP-representation}.
The direct sums and indecomposable QP-representations are defined
in a natural way.
We say that $\M$ is \emph{positive} if $V = \{0\}$, and
\emph{negative} if $M = \{0\}$.
Thus, indecomposable positive QP-representations are just
indecomposable ${\mathcal P}(Q,S)$-modules, while
indecomposable negative QP-representations are
\emph{negative simple} representations ${\mathcal S}_k^-$ for $k
\in Q_0$ defined as follows:
\begin{equation}
\label{eq:neg-simple-def}
{\mathcal S}_k^-(Q,S) = (\{0\},V), \,\, \dim V(i) = \delta_{i,k}.
\end{equation}

As in \cite[Definitions~4.2, 10.2]{dwz}, we view QP's and their representations
up to \emph{right-equivalence}.
Recall that QP's $(Q,S)$ and $(Q,S')$ on the same underlying
quiver~$Q$ are right-equivalent if there is an
automorphism~$\varphi$ of $R\langle\langle A\rangle\rangle$ (as an
algebra and $R$-bimodule) such that $\varphi(S)$ is cyclically equivalent to~$S'$.
In view of \cite[Proposition~3.7]{dwz}, we then have $\varphi(J(S)) =
J(S')$; therefore, every ${\mathcal P}(Q,S)$-module~$M$ carries a
structure of a ${\mathcal P}(Q,S')$-module (which we denote ${}^\varphi M$)
with the ``twisted" action of $R\langle\langle
A\rangle\rangle$ given by
$$\varphi(u) \star m = u m \quad (u \in R\langle\langle A \rangle\rangle, \,\, m \in M).$$
Now a QP-representation $\M' = (M',V')$ of $(Q,S')$ is
\emph{right-equivalent} to a QP-representation $\M = (M,V)$ of
$(Q,S)$ if $M'$ is isomorphic to ${}^\varphi M$ as a ${\mathcal P}(Q,S')$-module,
and $V'$ is isomorphic to $V$ as an $R$-module.

Let $\varphi$ be an automorphism of $R\langle\langle A\rangle\rangle$ as above.
Fix a vertex $k \in Q_0$, and use the notation in \eqref{eq:arrows-thru-k}.
We would like to express the matrix $H_k (\varphi(S))$ in terms of $H_k (S)$.
As shown in the proof of Lemma~5.3 in \cite{dwz}, we have
\begin{equation}
\label{eq:phi-on-a}
\begin{pmatrix}
\varphi(a_1) & \varphi(a_2) & \cdots & \varphi(a_r)\end{pmatrix}
=
\begin{pmatrix}
a_1 & a_2 & \cdots & a_r\end{pmatrix}
(C_0 + C_1),
\end{equation}
where:
\begin{itemize}
\item $C_0$ is an invertible $r \times r$ matrix with entries
in~$\C$ such that its $(p,q)$-entry is~$0$ unless $t(a_p) = t(a_q)$;
\item $C_1$ is a $r \times r$ matrix whose $(p,q)$-entry belongs
to ${\mathfrak m}_{t(a_p),t(a_q)}$.
\end{itemize}
Similarly, we have
\begin{equation}
\label{eq:phi-on-b}
\begin{pmatrix}
\varphi(b_1)\\ \varphi(b_2)\\ \vdots \\ \varphi(b_s)\end{pmatrix}
=
(D_0 + D_1)\begin{pmatrix}
b_1 \\ b_2 \\ \vdots \\ b_s\end{pmatrix}
,
\end{equation}
where:
\begin{itemize}
\item $D_0$ is an invertible $s \times s$ matrix with entries
in~$\C$ such that its $(p,q)$-entry is~$0$ unless $h(b_p) = h(b_q)$;
\item $D_1$ is a $s \times s$ matrix whose $(p,q)$-entry belongs
to ${\mathfrak m}_{h(b_p),h(b_q)}$.
\end{itemize}
Note that both matrices $C_0 + C_1$ and $D_0 + D_1$ are invertible, and their inverses are of the
same form.

In the above notation, we claim that
\begin{align}
\label{Hk-phiS-S}
&\text{all entries of the matrix}\\
\nonumber
&\text{$H_k (\varphi(S)) - (C_0 + C_1)\, \varphi(H_k (S))\, (D_0 + D_1)$ belong to
$J(\varphi(S))$}
\end{align}
(here the matrix $\varphi(H_k (S))$ is obtained by
applying~$\varphi$ to each entry of $H_k (S)$).
As a consequence, for the representation $M' = {}^\varphi M$ as
above, the corresponding map $\gamma'_k$ is given by
\begin{equation}
\label{eq:gamma-gamma'}
\gamma'_k = (C_0 + C_1)_{M'}\circ \gamma_k \circ (D_0 + D_1)_{M'},
\end{equation}
where $(C_0 + C_1)_{M'}$ (resp. $(D_0 + D_1)_{M'}$) is an $R$-bimodule
automorphism of $M_{\rm in}(k)$ (resp. of $M_{\rm out}(k)$).
Note that \eqref{eq:gamma-gamma'} is the equality (10.16) in
\cite{dwz}, while \eqref{Hk-phiS-S} is implicit in the proof of this equality.

We now recall one of the main technical results of \cite{dwz}, the
Splitting Theorem (\cite[Theorem~4.6]{dwz}).
Let $Q$ be a quiver without loops (but possibly having oriented
$2$-cycles).
We say that a QP $(Q,S)$ is \emph{trivial} if $S$ is a
linear combination of cyclic $2$-paths, and $J(S) = {\mathfrak m}$;
in other words (see \cite[Proposition~4.4]{dwz}),
the set of arrows $Q_1$ consists of $2N$ distinct arrows
$a_1, b_1, \dots, a_N, b_N$ such that
each $a_\nu b_\nu$ is a cyclic $2$-path, and
there is an $R$-bimodule automorphism~$\varphi$ of the arrow
span~$A$ such that $\varphi(S)$ is cyclically equivalent to $a_1 b_1 + \cdots + a_N b_N$.
We say that a QP $(Q,S)$ is \emph{reduced} if $S \in {\mathfrak m}^3$
(note that $Q$ is still allowed to have oriented $2$-cycles).
Now the Splitting Theorem asserts that
\begin{equation}\label{eq:splitting-theorem}
\vcenter{\noindent\hsize=5in any QP $(Q,S)$ is right-equivalent to the direct sum of
a reduced QP $(Q,S)_{\rm red}$ and a trivial QP $(Q,S)_{\rm triv}$, each
of which is determined by $(Q,S)$ up to right-equivalence.}
\end{equation}

%Note that the quiver of $(Q,S)_{\rm red}$ may have oriented
%$2$-cycles; however this does not happen for a generic choice of a
%potential~$S$.

We refer to $(Q,S)_{\rm red}$ as the \emph{reduced part} of $(Q,S)$.
The operation of taking the reduced part naturally extends to
representations.
Namely, if $\M = (M,V)$ is a representation of $(Q,S)$, then
$\M_{\rm red}$ is obtained by transforming $\M$ into a representation
$({}^\varphi M, V)$ of $(Q,S)_{\rm red} \oplus (Q,S)_{\rm triv}$ with the help
of a right-equivalence in \eqref{eq:splitting-theorem}, and then
restricting the resulting representation to $(Q,S)_{\rm red}$
(see \cite[Definition~10.4]{dwz} for more details).
By \cite[Proposition~10.5]{dwz}, the reduction of representations
is well-defined on the level of right-equivalence classes.

Now everything is in place for introducing our main tool --
mutations of reduced QP's and their representations.
Let $(Q,S)$ be a reduced QP, and $k \in Q_0$ a vertex such that $Q$ has no
oriented $2$-cycles through~$k$.
Following \cite{dwz}, we define the \emph{mutation}
$(\overline Q, \overline S) = \mu_k(Q,S)$ at~$k$ as the reduced
part $(\widetilde Q, \widetilde S)_{\rm red}$, where the
``premutation" $(\widetilde Q, \widetilde S) = \widetilde \mu_k (Q,S)$
is defined as follows.
First, the quiver $\widetilde Q$ is obtained from
$Q$ by the following two-step procedure:
\begin{enumerate}
\item[{\bf Step 1.}] For every pair of arrows $a, b \in Q_1$ with
$h(a) = k = t(b)$, create a ``composite" arrow $[ba]$ with
$h([ba]) = h(b)$ and $t([ba]) = t(a)$.
\item[{\bf Step 2.}] Reverse all arrows at~$k$; that is, replace each arrow
$a$ with $h(a) = k$ (resp. each arrow $b$ with $t(b) = k$) by
an arrow $a^\star$ with $t(a^\star) = k$ and $h(a^\star) = t(a)$
(resp. $b^\star$ with $h(b^\star) = k$ and $t(b^\star) = h(b)$).
\end{enumerate}
Second, the potential $\widetilde S$ on $\widetilde Q$ is obtained
from $S$ as follows: replacing~$S$ if necessary with a
cyclically equivalent potential, we can assume that
no cyclic path occurring in~$S$ starts and ends at~$k$;
then we set
\begin{equation}
\label{eq:mu-k-S}
\widetilde S = [S] + \Delta,
\end{equation}
where
\begin{equation}
\label{eq:Tk}
\Delta = \sum_{a, b \in Q_1: \ h(a) = t(b) = k} [ba]a^\star b^\star,
\end{equation}
and $[S]$ is obtained by substituting $[a_\nu a_{\nu+1}]$ for each
factor $a_\nu a_{\nu+1}$ with $t(a_\nu) = h(a_{\nu+1}) = k$ of any cyclic path
$a_1 \cdots a_d$ occurring in the expansion of~$S$.
As shown in \cite[Theorem~5.2]{dwz}, the right-equivalence class
of $\widetilde \mu_k (Q,S)$ is determined by the
right-equivalence class of
$(Q,S)$; hence by \eqref{eq:splitting-theorem}, the same is true
for $\mu_k (Q,S) = (\widetilde \mu_k (Q,S))_{\rm red}$.
Furthermore, by \cite[Theorem~5.7]{dwz}, the mutation $\mu_k$
acts as an involution on the set of right-equivalence classes of reduced QPs, that is,
$\mu_k^2(Q,S)$ is right-equivalent to $(Q,S)$.

Now let $\M = (M,V)$ be a QP-representation of a reduced QP $(Q,S)$.
Fix a vertex~$k$ and let $(\widetilde Q, \widetilde S) = \widetilde \mu_k (Q,S)$, and
$(\overline Q, \overline S) = \mu_k(Q,S) = (\widetilde Q, \widetilde S)_{\rm red}$.
We define the mutated QP-representation $\overline \M = \mu_k(\M)$ of $(\overline Q, \overline S)$
as the reduced part of the QP-representation
$\widetilde \M = \widetilde \mu_k(\M) = (\overline M, \overline
V)$ of $(\widetilde Q, \widetilde S)$ given by the
following construction (see \cite[Section~10]{dwz}).

First, we set
\begin{equation}
\label{eq:M-unchanged}
\overline M(i) = M(i), \quad  \overline V(i) = V(i) \quad (i \neq k),
\end{equation}
and define the spaces $\overline M(k)$ and $\overline V(k)$ by
\begin{equation}
\label{eq:new-Mk}
\overline M(k) = \frac{\ker \gamma_k}{{\rm im}\ \beta_k} \oplus
{\rm im}\ \gamma_k \oplus \frac{\ker \alpha_k}{{\rm im}\ \gamma_k}
\oplus V(k), \quad  \overline V(k) =
\frac{\ker \beta_k}{\ker \beta_k \cap {\rm im}\ \alpha_k}\
\end{equation}
(see \eqref{eq:triangle}).
For every arrow~$c$ of $\widetilde Q$, the corresponding
linear map $c_{\overline M}: \overline M(t(c)) \to \overline M(h(c))$
is defined as follows.

We set $c_{\overline M} = c_{M}$
for every arrow~$c$ not incident to~$k$, and
$[b a]_{\overline M} =  b_M a_M$
for all arrows $a$ and $b$ in $Q$ with $h(a) = k = t(b)$.
It remains to define the linear maps
$$\overline \alpha_k: \overline M_{\rm in}(k) = M_{\rm out}(k) \to
\overline M(k), \quad
\overline \beta_k: \overline M(k) \to \overline M_{\rm out}(k) = M_{\rm in}(k)$$
in the counterpart of the triangle \eqref{eq:triangle} for the
representation $\widetilde \M$.
We use the following notational convention: whenever we have
a pair $U_1 \subseteq U_2$ of vector spaces,
denote by $\iota:U_1 \to U_2$ the inclusion map, and by $\pi: U_2 \to U_2/U_1$
the natural projection.
We now introduce the following \emph{splitting data}:
\begin{equation}
\label{eq:rho}
\vbox{\hsize=5in\noindent
Choose a linear map $\rho: M_{\rm out}(k) \to \ker \gamma_k$
such that $\rho \iota = {\rm id}_{\ker \gamma_k}$.}
\end{equation}
\begin{equation}\label{eq:sigma}\vbox{\hsize=5in\noindent
Choose a linear map $\sigma: \ker \alpha_k / {\rm im}\ \gamma_k
\to \ker \alpha_k$ such that $\pi \sigma = {\rm id}_{\ker \alpha_k/ {\rm im} \gamma_k}$.}
\end{equation}
Then we define:
\begin{equation}
\label{eq:alpha-beta-mutated}
\overline \alpha_k =
\begin{pmatrix}
- \pi \rho\\
- \gamma_k\\
0\\
0
\end{pmatrix}, \quad
\overline \beta_k  =
\begin{pmatrix}
0 & \iota & \iota \sigma & 0\end{pmatrix}.
\end{equation}

As shown in \cite[Propositions~10.7, 10.9, 10.10]{dwz}, the above construction makes
$\widetilde \mu_k ({\mathcal M})=(\overline M, \overline V)$
a QP-representation of $(\widetilde Q, \widetilde S)$, whose isomorphism class
does not depend on the choice of the splitting data \eqref{eq:rho} --
\eqref{eq:sigma}, and whose right-equivalence class
is determined by the right-equivalence class of~${\mathcal M}$.
Furthermore, we have
\begin{equation}
\label{eq:overline-gamma}
\overline \gamma_k = \beta_k \alpha_k,
\end{equation}
and
\begin{eqnarray}
\label{eq:overline-kernels-images}
&\ker \overline \alpha_k = {\rm im}\ \beta_k,
\quad {\rm im}\ \overline \alpha_k =
\displaystyle\frac{\ker \gamma_k}{{\rm im}\ \beta_k} \oplus
{\rm im}\ \gamma_k \oplus \{0\}
\oplus \{0\},\\
\nonumber
&\ker \overline \beta_k =
\displaystyle \frac{\ker \gamma_k}{{\rm im}\ \beta_k} \oplus
\{0\} \oplus \{0\}
\oplus V(k),
\quad \,\,\,{\rm im}\ \overline \beta_k = \ker \alpha_k .
%\\
%\nonumber
%&\ker \overline \gamma = \ker(\beta \alpha),
%\quad {\rm im}\ \overline \gamma = {\rm im}\ (\beta \alpha).
\end{eqnarray}
(see \cite[(10.25), (10.26)]{dwz}).

Since by the definition, the representation $\mu_k(\M)$ of $(\overline Q, \overline S)$
is the reduced part of $\widetilde \M = \widetilde \mu_k(\M) = (\overline M,
\overline V)$, the right-equivalence class of $\mu_k(\M)$ is
determined by the right-equiva\-lence class of $\M$.
Furthermore, in view of \cite[Theorem~10.13]{dwz},
the mutation $\mu_k$ of QP-representations is an
involution:
\begin{equation}
\label{eq:muk-rep-involution}
\vcenter{\hsize=5in\noindent
for every QP-representation $\mathcal M$ of a reduced QP $(Q,S)$,
the QP-re\-pre\-sen\-ta\-tion $\mu_k^2 (\mathcal M)$
%of $\mu_k^2(Q,S)$
is right-equivalent to $\mathcal M$.}
\end{equation}
Since by construction, the mutations send direct sums of QP-representations to the direct
sums, \eqref{eq:muk-rep-involution} implies that (cf.
\cite[Corollary~10.14]{dwz})
\begin{equation}
\label{eq:rep-mutations-respect-idecomposables}
\vcenter{\hsize=5in\noindent
any mutation $\mu_k$ sends indecomposable QP-representations of reduced QPs to indecomposable ones.}
\end{equation}

Now suppose that the quiver $Q$ has no oriented $2$-cycles, i.e.,
it is of the form $Q(B)$ for some skew-symmetric integer matrix~$B$
(see \eqref{eq:QB}).
Then the mutated QP $\mu_k(Q,S) = (\overline Q, \overline S)$ is
well-defined for any vertex~$k$ and any potential~$S$ on~$Q$.
However, the quiver $\overline Q$ may acquire some oriented
$2$-cycle, say involving vertices~$i$ and~$j$, which would make
mutations $\mu_i$ and $\mu_j$ undefined for the QP $(\overline Q, \overline S)$.
Following \cite[Definition~7.2]{dwz}, we say that a QP $(Q,S)$ is
\emph{nondegenerate} if this does not happen, and moreover if any
finite sequence of mutations $\mu_{k_\ell} \cdots \mu_{k_1}$ can be applied to $(Q,S)$
without creating oriented $2$-cycles along the way.
According to this definition, the class of nondegenerate QPs is
stable under all mutations.
Furthermore, according to \cite[Proposition~7.1]{dwz}, mutations of
nondegenerate QPs are compatible with matrix mutations: if
$\mu_k(Q(B),S) = (\overline Q, \overline S)$ then
$\overline Q = Q(\mu_k(B))$ with $\mu_k(B)$ given by
\eqref{eq:B-mutation}.

Finally we note that every quiver $Q(B)$ has a
potential~$S$ such that $(Q(B),S)$ is a nondegenerate QP.
More precisely, in view of \cite[Corollary~7.4]{dwz}, the non-degeneracy
of $(Q(B),S)$ is guaranteed by non-vanishing at~$S$ of countably many
nonzero polynomial functions on the space of potentials on~$Q(B)$
(taken up to cyclical equivalence).

\section{QP-interpretation of $\g$-vectors and $F$-polynomials}
\label{sec:qp-interpretation}

We retain all the notation and conventions of the preceding sections.
To a QP-represen\-ta\-tion $\M = (M,V)$ we associate
the $\g$-vector $\g_{\M} = (g_1, \dots, g_n) \in \Z^n$ given by
\eqref{eq:g-M}, and the $F$-polynomial $F_\M = F_M$ given by
\eqref{eq:F-M} (in particular, if $\M$ is negative then $F_\M = 1$).
Note that $\g_{\M} = \g_{\M'}$ and $F_{\M} = F_{\M'}$ if $\M$ and
$\M'$ are right-equivalent (for the $F$-polynomial, this is
immediate from \eqref{eq:F-M}; for the $\g$-vector, this is a
consequence of \eqref{eq:gamma-gamma'}).
Note also that
\begin{equation}
\label{eq:g-sum}
\g_{\M \oplus \M'} = \g_{\M} + \g_{\M'}
\end{equation}
for any QP-representations $\M$ and $\M'$ of the same QP.

%As in Sections~\ref{sec:intro}, \ref{sec:F-poly}, l
Let $B$ be a
skew-symmetric integer $n \times n$ matrix, $t_0, t \in \TT_n$,
and $\ell \in \{1, \dots, n\}$.
Let $Q = Q(B)$ and let $S$ be a potential on~$Q$ such that $(Q,S)$
is a nondegenerate QP.
The main result of this section is a construction of a
QP-representation $\M = \M_{\ell;t}^{B;t_0}$ of $(Q,S)$ such that
$\g_\M = \g_{\ell;t}^{B;t_0}$ and $F_\M = F_{\ell;t}^{B;t_0}$,
where the $\g$-vectors $\g_{\ell;t}^{B;t_0}$ and $F$-polynomials $F_{\ell;t}^{B;t_0}$
were introduced in Section~\ref{sec:F-poly}.

The family of QP-representations $\M_{\ell;t}^{B;t_0}$
is uniquely determined by the properties \eqref{eq:neg-simple} and \eqref{eq:Mlt-mutation}.
More explicitly, let
$$t_0 \overunder{k_1}{} t_1 \overunder{k_2}{}  \cdots \overunder{k_p}{} t_p = t$$
be the (unique) path joining $t_0$ and $t$ in $\TT_n$.
We set
$$(Q(t),S(t)) = \mu_{k_p} \cdots \mu_{k_1} (Q,S),$$
which is well-defined because~$(Q,S)$ is nondegenerate.
Let ${\mathcal S}_\ell^-(Q(t),S(t))$ be the negative simple representation of
$(Q(t),S(t))$ at a vertex~$\ell$ (see \eqref{eq:neg-simple-def}).
Then we have
\begin{equation}
\label{eq:M-cluster-variable}
\M_{\ell;t}^{B;t_0} = \mu_{k_1} \cdots \mu_{k_p} ({\mathcal S}_\ell^-(Q(t),S(t)));
\end{equation}
in view of \eqref{eq:muk-rep-involution}, replacing
$\M_{\ell;t}^{B;t_0}$ if necessary  by a right-equivalent
representation, we can assume that it is a QP-representation of $(Q,S)$.

\begin{theorem}
\label{th:g-F-rep}
We have
\begin{equation}
\label{eq:g-F-rep}
\g_{\ell;t}^{B;t_0} = \g_\M, \quad F_{\ell;t}^{B;t_0} = F_\M,
\end{equation}
where $\M = \M_{\ell;t}^{B;t_0}$.
\end{theorem}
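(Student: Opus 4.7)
The plan is to prove the two equalities simultaneously by induction on the distance $d(t_0,t)$ in $\TT_n$. The base case $t = t_0$ is immediate: by \eqref{eq:neg-simple}, $\M = \mathcal{S}_\ell^-$, so $M = 0$ forces $F_\M = 1 = F_{\ell;t_0}^{B;t_0}$, while \eqref{eq:g-M} gives $g_k = \delta_{k,\ell} = (\e_\ell)_k = (\g_{\ell;t_0}^{B;t_0})_k$, matching \eqref{eq:gg-initial}.

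For the inductive step, choose the first edge $t_0 \overunder{k}{} t_1$ on the path from $t_0$ to $t$ in $\TT_n$, and set $B_1 = \mu_k(B)$. Using the involution property \eqref{eq:muk-rep-involution} together with \eqref{eq:Mlt-mutation}, we have $\M = \mu_k(\M')$, where $\M' := \M_{\ell;t}^{B_1;t_1}$ is (up to right-equivalence) a QP-representation of $\mu_k(Q(B),S)$. Since $d(t_1,t) < d(t_0,t)$, the inductive hypothesis yields $\g_{\M'} = \g_{\ell;t}^{B_1;t_1}$ and $F_{\M'} = F_{\ell;t}^{B_1;t_1}$. In view of Proposition~\ref{pr:g-F-t0-t1}, to close the induction it suffices to prove the following two \emph{representation-theoretic transition rules}, valid for every QP-representation $\mathcal N$ of $(Q(B),S)$ (where $B$ is skew-symmetric):
\begin{equation*}
\text{(A)}\qquad
(y_k+1)^{h_k(\mathcal N)} F_{\mathcal N}(y_1,\ldots,y_n) = (y'_k+1)^{h'_k(\mu_k(\mathcal N))} F_{\mu_k(\mathcal N)}(y'_1,\ldots,y'_n),
\end{equation*}
with the $y'_i$ given by the $Y$-seed mutation \eqref{eq:y-mutation}, and
\begin{equation*}
\text{(B)}\qquad
g'_k = -g_k, \qquad g'_j = g_j + [b_{j,k}]_+ g_k - b_{j,k}\, h_k(\mathcal N) \quad (j\neq k),
\end{equation*}
where $\g_{\mathcal N} = (g_1,\ldots,g_n)$, $\g_{\mu_k(\mathcal N)} = (g'_1,\ldots,g'_n)$, and $h_k(\mathcal N) = -\dim\ker\beta_{k;\mathcal N}$ as in \eqref{eq:h-ker-beta}. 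Proposition~\ref{pr:h-ker-beta} guarantees that this representation-theoretic $h_k$ agrees with the $h_k$ occurring in Proposition~\ref{pr:g-F-t0-t1} whenever $F_{\mathcal N} = F_{\ell;t}^{B;t_0}$, so the two transition rules combine with Proposition~\ref{pr:g-F-t0-t1} to give exactly what induction requires.

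Lemma~(B) I expect to reduce to a direct linear-algebra calculation from the explicit mutation formulas. Using \eqref{eq:new-Mk}, \eqref{eq:overline-gamma}, \eqref{eq:overline-kernels-images}, one computes
\[
\dim\ker\overline\gamma_k = \dim\ker\alpha_k + \dim(\im\alpha_k \cap \ker\beta_k), \quad \dim\overline V(k) = \dim\ker\beta_k - \dim(\im\alpha_k\cap\ker\beta_k),
\]
and combines these with $\dim\overline M(k) = \dim\ker\gamma_k + \dim\ker\alpha_k - \dim\im\beta_k + \dim V(k)$ to obtain $g'_k = -g_k$. For $j\neq k$, one tracks how the new composite arrows $[ba]$ and the new arrows $a^\star, b^\star$ contribute to $\ker\overline\gamma_j$, extracts the difference $\dim\ker\overline\gamma_j - \dim\ker\gamma_j$, and matches it against the right-hand side of (B) using the matrix-mutation formula \eqref{eq:B-mutation}.

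Lemma~(A) is the main obstacle; this is the content that makes the theorem substantive. I would approach it by exploiting a $\C^*$-action on the mutated space \eqref{eq:new-Mk} (in the spirit of Proposition~\ref{pr:F-direct-sums}) whose fixed-point locus decomposes quiver Grassmannians of $\mu_k(\mathcal N)$ into pieces indexed by the direct-sum decomposition $\ker\gamma_k/\im\beta_k \oplus \im\gamma_k \oplus \ker\alpha_k/\im\gamma_k \oplus V(k)$. The resulting identity on Euler characteristics can then be compared term-by-term with a parallel stratification of $\Gr_\e(\mathcal N)$ via subspaces of $M(k)$ filtered by their intersections with $\im\beta_k$, $\ker\gamma_k$, and $\alpha_k^{-1}(\ker\beta_k)$. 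After rewriting both sides through the substitution \eqref{eq:y-mutation} and collecting powers of $(y_k+1)$, the factor $(y_k+1)^{h_k(\mathcal N)}$ naturally appears from the contribution of $\ker\beta_k$, yielding the claimed identity (A). Combining (A), (B), and Proposition~\ref{pr:g-F-t0-t1} then completes the inductive step and proves \eqref{eq:g-F-rep}.
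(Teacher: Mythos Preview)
Your overall architecture matches the paper's: induction on $d(t_0,t)$, reducing the inductive step to a representation-theoretic ``mutation lemma'' (your rules (A) and (B), the paper's Lemma~\ref{lem:g-F-mutation}) and then comparing with Proposition~\ref{pr:g-F-t0-t1}. However, as written your induction does not close, and the gap is precisely the one the paper takes care to fill.

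The problem is in the $F$-polynomial half. With the inductive hypothesis you know $F_{\M'} = F_{\ell;t}^{B_1;t_1}$ and (via Proposition~\ref{pr:h-ker-beta}) $h_k(\M') = h'_k$. Plugging into (A) and comparing with \eqref{eq:F-t0-t1} yields only
\[
(y_k+1)^{\,h_k(\M)}\,F_{\M}(y) \;=\; (y_k+1)^{\,h_k}\,F_{\ell;t}^{B;t_0}(y),
\]
and you have no way to conclude $h_k(\M)=h_k$ from (A), (B), and the hypothesis alone. You cannot invoke Proposition~\ref{pr:h-ker-beta} for $\M$, since that would require $F_\M = F_{\ell;t}^{B;t_0}$, which is exactly what you are trying to prove. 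The paper breaks this circularity by proving, as an independent part of the mutation lemma, the representation-theoretic analogue of \eqref{eq:g-thru-h}, namely
\[
g_k(\mathcal N) \;=\; h_k(\mathcal N) - h_k(\mu_k(\mathcal N)),
\]
and by carrying the auxiliary statement $\h_M = \h_{\ell;t}^{B;t_0}$ through the induction (see \eqref{eq:h-rep-cluster}). With that extra identity, $g_k(\M') = h_k(\M') - h_k(\M)$ combined with the already-established $\g$-vector step and \eqref{eq:g-thru-h} forces $h_k(\M)=h_k$, and the $F$-polynomial equality follows. You should add this relation to your list of rules; your (B) only records $g'_k=-g_k$, which is a consequence of it but not equivalent.

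On the proofs of the rules themselves: your sketch for (B) is along the right lines and close to the paper's Step~3, which writes $\gamma_i$ and $\widetilde\gamma_i$ in block form and uses $\ker\overline\alpha_k=\im\beta_k$ to build an explicit short exact sequence comparing their kernels. For (A) the paper does \emph{not} use a $\C^*$-action. Instead it stratifies $\Gr_\e(M)$ by the pair $(r,s)=(\dim\alpha_k(N_{\rm in}(k)),\,\dim\beta_k^{-1}(N_{\rm out}(k)))$, obtaining auxiliary varieties $Z_{\e';r,s}(M)$ over which the fiber is an ordinary Grassmannian $\Gr_{e_k-r}(\C^{s-r})$; the key step is the \emph{equality of varieties} $Z_{\e';r,s}(M)=Z_{\e';\overline r,\overline s}(\overline M)$ for an explicit affine change $(r,s)\mapsto(\overline r,\overline s)$, after which (A) drops out by summing binomial coefficients. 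Your proposed torus action on $\overline M(k)$ is a different idea; it may be workable, but the stratification you describe (by intersections with $\im\beta_k$, $\ker\gamma_k$, $\alpha_k^{-1}(\ker\beta_k)$) is not obviously the one that matches the $(\overline r,\overline s)$ bookkeeping, and you have not indicated how the two sides line up term by term.
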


\begin{proof}
We deduce Theorem~\ref{th:g-F-rep} from the following key lemma.

\begin{lemma}
\label{lem:g-F-mutation}
Let $\M = (M,V)$ be an arbitrary QP-representation of a
nondegenerate QP $(Q(B),S)$, let $\overline \M = (\overline M,
\overline V) = \mu_k(\M)$ for some $k \in Q(B)_0$, and suppose that
the $Y$-seed $(\yy', B_1)$ in $\Qsf(y_1, \dots, y_n)$
is obtained from $(\yy, B)$ by the mutation at~$k$.
Let $h_k$ (resp.~$h'_k$) be the $k$-th component of the
vector $\h_M$ (resp. $\h_{\overline M}$) given by \eqref{eq:h-ker-beta}.
Then the $\g$-vector $\g_\M = (g_1, \dots, g_n)$ satisfies
\eqref{eq:g-thru-h}, and is related to the $\g$-vector
$\g_{\overline \M} = (g'_1, \dots, g'_n)$ via
\eqref{eq:g-transition}.
Furthermore, the $F$-polynomials $F_{\M}$ and $F_{\overline \M}$
are related by
\begin{equation}
\label{eq:F-M-Mbar}
(y_k + 1)^{h_k}F_{\M}(y_1, \dots, y_n)
= (y'_k + 1)^{h'_k}
F_{\overline \M}(y'_1, \dots, y'_n)\,.
\end{equation}
\end{lemma}

Before proving Lemma~\ref{lem:g-F-mutation}, we first show how it
implies Theorem~\ref{th:g-F-rep}.
Let $\M = (M,V) = \M_{\ell;t}^{B;t_0}$.
We prove \eqref{eq:g-F-rep} together with the equality
\begin{equation}
\label{eq:h-rep-cluster}
\h_M = \h_{\ell;t}^{B;t_0}
\end{equation}
(see \eqref{eq:h}) by induction on the
distance between $t_0$ and $t$ in the tree $\TT_n$.
The basis of induction is the case $t = t_0$.
By \eqref{eq:neg-simple}, we have
$\M_{\ell;t_0}^{B;t_0} = \mathcal S_\ell^-(Q(B),S)$.
The fact that the $\g$-vector and $F$-polynomial of this
QP-representation agree with \eqref{eq:gg-initial} and
\eqref{eq:F-initial}, is immediate from the definitions,
while both sides of \eqref{eq:h-rep-cluster} are equal to~$0$.

Now assume that \eqref{eq:g-F-rep} and \eqref{eq:h-rep-cluster}
are satisfied for some $\ell$ and $t$, and that $t_0 \overunder{k}{} t_1$ in $\TT_n$.
In view of \eqref{eq:Mlt-mutation}, the QP-representation
$\overline \M = (\overline M, \overline V)$ in Lemma~\ref{lem:g-F-mutation}
is equal to $\M_{\ell;t}^{B';t_1}$, where $B' = \mu_k(B)$.
To finish the proof, it suffices to show that
\begin{enumerate}
%\label{eq:inductive-conclusion}
\item $\g_{\overline \M} = \g_{\ell;t}^{B_1;t_1}$;
\item $F_{\overline \M} = F_{\ell;t}^{B_1;t_1}$;
\item $\h_{\overline \M} = \h_{\ell;t}^{B_1;t_1}$.
\end{enumerate}

To prove (1),
it suffices to observe that, by Lemma~\ref{lem:g-F-mutation}, the
vector $\g_{\overline \M}$ is obtained from $\g_{\M}$ by the same
rule \eqref{eq:g-transition} that expresses
$\g_{\ell;t}^{B_1;t_1}$ in terms of $\g_{\ell;t}^{B;t_0}$.
Then, since by Lemma~\ref{lem:g-F-mutation} the numbers $h'_k$,
$h_k$ and $g_k$ are related by \eqref{eq:g-thru-h}, we conclude
that $h'_k$ is the $k$-th component of the vector
$\h_{\ell;t}^{B_1;t_1}$.
Next, using the latter claim, and comparing \eqref{eq:F-M-Mbar}
with the relation \eqref{eq:F-t0-t1} in
Proposition~\ref{pr:g-F-t0-t1}, we obtain the proof of (2).
Finally, to prove (3) it is enough to apply Proposition~\ref{pr:h-ker-beta}
to the representation $\overline M$ (note that in view of (2), the
polynomial $F_{\overline \M}$ is a subtraction-free rational
expression, which makes Proposition~\ref{pr:h-ker-beta} applicable).

It remains to prove Lemma~\ref{lem:g-F-mutation}, which we
accomplish in several steps.

\smallskip

\noindent {\bf Step 1.}
We start by proving that the numbers $h'_k$,
$h_k$ and $g_k$ in Lemma~\ref{lem:g-F-mutation}
are related by \eqref{eq:g-thru-h}, which we rewrite as
$$- h'_k = g_k - h_k.$$
Remembering \eqref{eq:h-ker-beta} and \eqref{eq:g-M}, we can
rewrite this equality as
$$\dim \ker \overline \beta_k = \dim \ker \gamma_k - \dim M(k) + \dim V(k)
+ \dim \ker \beta_k
= \dim\Big(\frac{\ker \gamma_k}{{\rm im}\ \beta_k}
\oplus V(k)\Big),$$
which is immediate from \eqref{eq:overline-kernels-images}.

\smallskip

\noindent {\bf Step 2.} Our next target is the identity \eqref{eq:F-M-Mbar}.
Suppose that
$N = (N(1), \dots, N(n)) \in \prod_{i = 1}^n \Gr_{e_i}(M(i))$, and
let $N_{\rm in}(k)$ and $N_{\rm out}(k)$
be the corresponding subspaces of $M_{\rm in}(k)$ and $M_{\rm out}(k)$,
respectively.
The condition that $N \in \Gr_\e(M)$ can be stated as the combination of the
following two conditions:
\begin{equation}
\label{eq:N-outside-k}
\text{$c_M (N(j)) \subseteq N(i)$ for any arrow $c: j \to i$ not
incident to~$k$ in $Q(B)$.}
\end{equation}
\begin{equation}
\label{eq:N-thru-k}
\alpha_k(N_{\rm in}(k)) \subseteq N(k) \subseteq \beta_k^{-1}(N_{\rm out}(k)).
\end{equation}
Now let $\e' = (e_i)_{i \neq k}$ denote the integer vector
obtained from $\e$ by forgetting the component~$e_k$.
For every such vector $\e'$ and every pair of nonnegative integers
$r \leq s$, we denote by $Z_{\e';r,s}(M)$ the variety of tuples
$(N(i))_{i \neq k}$ satisfying the inclusions \eqref{eq:N-outside-k} and
$\alpha_k(N_{\rm in}(k)) \subseteq \beta_k^{-1}(N_{\rm out}(k))$,
and such that $\dim N(i) = e_i$ for $i \neq k$, and
$$\dim \alpha_k(N_{\rm in}(k)) = r, \quad \dim \beta_k^{-1}(N_{\rm out}(k))= s.$$
Let $\tilde Z_{\e;r,s}(M)$ denote the subset of $\Gr_\e(M)$
consisting of all $N  = (N(1), \dots, N(n))$ such that the tuple
obtained from $N$ by forgetting $N(k)$ belongs to $Z_{\e';r,s}(M)$.
Then $\Gr_\e(M)$ is the disjoint union of the subsets $\tilde Z_{\e;r,s}(M)$
over all pairs $(r,s)$; and in view of \eqref{eq:N-thru-k}, each $\tilde Z_{\e;r,s}(M)$ is the fiber
bundle over $Z_{\e';r,s}(M)$ with the fiber $\Gr_{e_k-r}(\C^{s-r})$.
Since $\chi(\Gr_{e_k-r}(\C^{s-r})) = \binom{s-r}{e_k-r}$, it
follows that
$$\chi(\Gr_\e (M)) = \sum_{r,s} \binom{s-r}{e_k-r} \chi(Z_{\e';r,s}(M)).$$
Substituting this expression into \eqref{eq:F-M} and performing
the summation with respect to~$e_k$, we obtain
\begin{equation}
\label{eq:F-M-thru-k}
F_\M(y_1, \dots, y_n) =
\sum_{\e',r,s} \chi(Z_{\e';r,s}(M))y_k^r (y_k+1)^{s-r} \prod_{i \neq k}^n y_i^{e_i}.
\end{equation}

The proof of \eqref{eq:F-M-Mbar} is based on the following observation:
\begin{equation}
\label{eq:Z=barZ}
Z_{\e';r,s}(M) = Z_{\e';\overline r,\overline s}(\overline M),
\end{equation}
where $\overline r$ and $\overline s$ are given by
\begin{equation}
\label{eq:barr-bars}
\overline r = \sum_i [b_{i,k}]_+ e_i - h_k - s, \quad
\overline s = \sum_i [-b_{i,k}]_+ e_i - h'_k - r.
\end{equation}
In view of the symmetry between $\M$ and $\overline \M$, to prove \eqref{eq:Z=barZ},
it is enough to show that every $(N(i))_{i \neq k} \in Z_{\e';r,s}(M)$
belongs to $Z_{\e';\overline r,\overline s}(\overline M)$.

First of all, we need to show that
$\overline \beta_k \overline \alpha_k(N_{\rm out}(k)) \subseteq N_{\rm in}(k)$,
that is, the counterpart for $\overline M$ of the inclusion
$\alpha_k(N_{\rm in}(k)) \subseteq \beta_k^{-1}(N_{\rm out}(k))$.
As an immediate consequence of \eqref{eq:alpha-beta-mutated}, we get
$\overline \beta_k \overline \alpha_k = - \gamma_k$.
In view of \eqref{eq:gamma-ab}, each of the components of the map
$\gamma_k$ is a linear combination of compositions of maps of the
kind $c_M$ or $b_M a_M$ (where $a, b, c \in Q_1$ are such that
$h(a) = t(b) = k$, and $c$ is not incident to~$k$); thus, the
defining conditions \eqref{eq:N-outside-k} and \eqref{eq:N-thru-k}
imply the desired inclusion $\gamma_k(N_{\rm out}(k)) \subseteq N_{\rm in}(k)$.

To conclude the proof of \eqref{eq:Z=barZ}, it remains to show that
\begin{equation}
\label{eq:rbar-sbar-dimensions}
\dim \overline \alpha_k(N_{\rm out}(k)) = \overline r, \quad
\dim \overline \beta_k^{-1}(N_{\rm in}(k))= \overline s.
\end{equation}
To show the first equality, recall from \eqref{eq:overline-kernels-images} that
%\begin{equation}
%\label{eq:ker-alpha-bar}
$\ker \overline \alpha_k = {\rm im} \ \beta_k$,
%\end{equation}
implying that
$$\dim \overline \alpha_k(N_{\rm out}(k)) =
\dim N_{\rm out}(k)/(N_{\rm out}(k) \cap {\rm im} \ \beta_k) =
\sum_i [b_{i,k}]_+ e_i - \dim (N_{\rm out}(k) \cap {\rm im} \ \beta_k).$$
Using the exact sequence
$$0 \to \ker \beta_k \to \beta_k^{-1}(N_{\rm out}(k)) \to
N_{\rm out}(k) \cap {\rm im} \ \beta_k \to 0,$$
we conclude that
$$\dim (N_{\rm out}(k) \cap {\rm im} \ \beta_k) = \dim \beta_k^{-1}(N_{\rm
out}(k)) - \dim \ker \beta_k = s + h_k,$$
implying the first equality in \eqref{eq:rbar-sbar-dimensions}.
The second equality can be shown by similar arguments but also
follows from the first one applied to $\overline \M$ instead of $\M$.

The rest of the proof of \eqref{eq:F-M-Mbar} is straightforward: use \eqref{eq:F-M-thru-k} and
\eqref{eq:Z=barZ} for rewriting its right-hand
side in the form
$$(y'_k + 1)^{h'_k}
F_{\overline \M}(y'_1, \dots, y'_n) =
(y'_k + 1)^{h'_k}
\sum_{\e',r,s} \chi(Z_{\e';r,s}(M))(y'_k)^{\overline r}
(y'_k+1)^{\overline s - \overline r} \prod_{i \neq k} (y'_i)^{e_i},$$
then substitute for $y'_1, \dots, y'_n$ (resp. for $\overline r$ and $\overline s$)
the expressions given by \eqref{eq:y-mutation} (resp. by \eqref{eq:barr-bars}), simplify the resulting
expression, and use \eqref{eq:F-M-thru-k} again to see that it is equal to
the left-hand-side of \eqref{eq:F-M-Mbar}.

\smallskip

\noindent {\bf Step 3.} To finish the proof of
Lemma~\ref{lem:g-F-mutation}, it remains to show that the vectors
$\g_\M$ and $\g_{\M'}$ are related by \eqref{eq:g-transition}.
As shown in Step~1, we have $g_k = h_k - h'_k$, implying
the equality $g'_k = -g_k$.

Now let $i \neq k$.
Using \eqref{eq:g-M}, \eqref{eq:h-ker-beta}, and the fact that the matrix $B$ is
skew-symmetric, we can rewrite the desired second equality in \eqref{eq:g-transition} as
$$\dim \ker \gamma_i - [b_{k,i}]_+ \dim \ker \beta_k =
\dim \ker \overline \gamma_i - [b_{i,k}]_+ \dim \ker \overline \beta_k.
$$
Interchanging $\M$ and $\overline \M$ if necessary, we see that
it suffices to prove the following:
\begin{equation}
\label{eq:ker-gamma-bargamma}
\text{if $b_{k,i} \geq 0$ then $\dim \ker \overline \gamma_i =
\dim \ker \gamma_i - b_{k,i} \dim \ker \beta_k$.}
\end{equation}

We first show that \eqref{eq:ker-gamma-bargamma} holds if we
replace the map $\overline \gamma_i: \overline M_{\rm out}(i) \to
\overline M_{\rm in}(i)$ with its counterpart $\widetilde \gamma_i: \widetilde M_{\rm out}(i) \to
\widetilde M_{\rm in}(i)$ for the representation $\widetilde \M = \widetilde \mu_k(\M)$.
We decompose the space $M_{\rm out}(i)$ as
$$M_{\rm out}(i) = M(k)^{b_{k,i}} \oplus M'_{\rm out}(i),$$
where the first summand corresponds to the $b_{k,i}$ arrows
from~$i$ to~$k$.
Accordingly, we have
$$\widetilde M_{\rm out}(i) = M_{\rm out}(k)^{b_{k,i}} \oplus M'_{\rm out}(i), \quad
\widetilde M_{\rm in}(i) = \overline M(k)^{b_{k,i}} \oplus M_{\rm in}(i).$$
Tracing the definitions, we see that the maps
$\gamma_i: M(k)^{b_{k,i}} \oplus M'_{\rm out}(i) \to M_{\rm in}(i)$
and $\widetilde \gamma_i: M_{\rm out}(k)^{b_{k,i}} \oplus M'_{\rm out}(i) \to
\overline M(k)^{b_{k,i}} \oplus M_{\rm in}(i)$ can be written in
the block-matrix form as
$$\gamma_i = \begin{pmatrix}\psi \circ \beta_k^{b_{k,i}} & \eta\end{pmatrix},
\quad \widetilde \gamma_i =
\begin{pmatrix}\overline \alpha_k^{b_{k,i}} & 0\\
\psi & \eta\end{pmatrix}$$
for some linear maps $\psi$ and $\eta$,
where $\beta_k^{b_{k,i}}$ and $\overline \alpha_k^{b_{k,i}}$ stand for the
direct (diagonal) sums of $b_{k,i}$ copies of the maps
$\beta_k: M(k) \to  M_{\rm out}(k)$ and
$\overline \alpha_k: M_{\rm out}(k) \to \overline M(k)$.
Using the equality $\ker \overline \alpha_k = {\rm im} \ \beta_k$
(\eqref{eq:overline-kernels-images}), it is easy to see that there is
an exact sequence
$$0 \to (\ker \beta_k)^{b_{k,i}} \oplus \{0\} \to \ker \gamma_i
\to \ker \widetilde \gamma_i \to 0,$$
where the map $\ker \gamma_i \to \ker \widetilde \gamma_i$ sends a
pair $(u,v) \in \ker \gamma_i \subseteq M(k)^{b_{k,i}} \oplus M'_{\rm out}(i)$
to $(\beta_k^{b_{k,i}} u, v)$.
We conclude that
$$\dim \ker \widetilde \gamma_i =
\dim \ker \gamma_i - b_{k,i} \dim \ker \beta_k.$$

To complete the proof of \eqref{eq:ker-gamma-bargamma}, it
remains to show that
\begin{equation}
\label{eq:ker-gamma-reduces}
\dim \ker \widetilde \gamma_i =
\dim \ker \overline \gamma_i.
\end{equation}
In view of \eqref{eq:gamma-gamma'}, $\dim \ker \widetilde \gamma_i$
does not change if we replace $(\widetilde Q, \widetilde S)$ with
a right-equivalent QP.
Thus, in proving \eqref{eq:ker-gamma-reduces}, we can assume that
$(\widetilde Q, \widetilde S) = (\overline Q, \overline S)
\oplus (Q',S')$, where $(Q',S')$ is a trivial QP.
In accordance with this decomposition, we can decompose the spaces
$\widetilde M_{\rm in}(i)$ and $\widetilde M_{\rm out}(i)$ as
$$\widetilde M_{\rm in}(i) = \overline M_{\rm in}(i) \oplus \widetilde M'_{\rm in}(i),
\quad \widetilde M_{\rm out}(i) = \overline M_{\rm out}(i) \oplus \widetilde M'_{\rm out}(i),$$
where the spaces $\widetilde M'_{\rm in}(i)$ and $\widetilde M'_{\rm out}(i)$
correspond to the arrows from~$Q'$.
Thus, $\widetilde \gamma_i$ has the following block-matrix form:
$$\widetilde \gamma_i = \begin{pmatrix}\overline \gamma_i & 0\\
0 & \iota\end{pmatrix},$$
where $\iota$ is a vector space isomorphism
$\widetilde M'_{\rm out}(i) \to \widetilde M'_{\rm in}(i)$.
This implies \eqref{eq:ker-gamma-reduces}, which
completes the proofs of Lemma~\ref{lem:g-F-mutation} and Theorem~\ref{th:g-F-rep}.
\end{proof}

Theorem~\ref{th:g-F-rep} yields a formula for cluster variables in
the coefficient-free cluster algebra (that is, the one with the
coefficient semifield $\PP = \{1\}$).

\begin{corollary}
\label{cor:CC-formula}
Suppose that $F_{\ell;t}^{B;t_0} \neq 1$, hence the
QP-representation $\M = \M_{\ell;t}^{B;t_0}$ is positive
(that is, $\M = (M,0)$).
Let $x_{\ell;t}$ be the corresponding cluster variable in
the coefficient-free cluster algebra .
Then $x_{\ell;t}$ is given by the formula
\begin{equation}
\label{eq:CC-formula-general}
x_{\ell;t} = \prod_{i=1}^n x_i^{- d_i}
\sum_\e \chi(\Gr_\e (M)) \prod_{i=1}^n
x_i^{- {\rm rk} \gamma_i + \sum_j  ([b_{i,j}]_+ e_j +
[-b_{i,j}]_+ (d_j - e_j))} \ ,
\end{equation}
where $d_i = \dim M(i)$.
\end{corollary}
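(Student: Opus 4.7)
The plan is to derive \eqref{eq:CC-formula-general} by specializing the general formula \eqref{eq:xjt=F/F} to the coefficient-free case and substituting the representation-theoretic interpretations from Theorem~\ref{th:g-F-rep}. The argument is essentially a bookkeeping calculation once the inputs are assembled.

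First, I would observe that in the coefficient-free cluster algebra, the semifield $\PP = \{1\}$ is trivial, so $F_{\ell;t}^{B;t_0}|_\PP(y_1,\dots,y_n) = 1$, and the elements defined by \eqref{eq:yhat-0} reduce to $\hat y_j = \prod_i x_i^{b_{i,j}}$. Formula \eqref{eq:xjt=F/F} therefore becomes $x_{\ell;t} = F_{\ell;t}^{B;t_0}(\hat y_1,\dots,\hat y_n)\,x_1^{g_1}\cdots x_n^{g_n}$, where $(g_1,\dots,g_n) = \g_{\ell;t}^{B;t_0}$. Applying Theorem~\ref{th:g-F-rep} to identify $F_{\ell;t}^{B;t_0} = F_M$ and $\g_{\ell;t}^{B;t_0} = \g_\M$ for $\M = (M,0)$, and substituting the definition \eqref{eq:F-M} of $F_M$, I obtain
\[
x_{\ell;t} \;=\; \sum_\e \chi(\Gr_\e(M))\prod_j x_j^{g_j + \sum_i b_{j,i}e_i}.
\]

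The remaining task is to check that, for each $j$, the exponent $g_j + \sum_i b_{j,i} e_i$ agrees with the exponent of $x_j$ in the right-hand side of \eqref{eq:CC-formula-general}. Using the identity $[b_{j,i}]_+ - [-b_{j,i}]_+ = b_{j,i}$, that exponent simplifies to
\[
-d_j - \operatorname{rk}\gamma_j + \sum_i b_{j,i}e_i + \sum_i [-b_{j,i}]_+ d_i,
\]
so the required equality reduces to the identity
\[
g_j \;=\; -d_j - \operatorname{rk}\gamma_j + \sum_i [-b_{j,i}]_+ d_i.
\]

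To verify this identity, I would use that $\M$ is positive (so $V = 0$), whence \eqref{eq:g-M} gives $g_j = \dim \ker \gamma_j - d_j$. From \eqref{eq:in-out-spaces} together with the arrow convention \eqref{eq:QB} and the skew-symmetry of $B$ (which gives $[b_{i,j}]_+ = [-b_{j,i}]_+$), I get $\dim M_{\rm out}(j) = \sum_i [-b_{j,i}]_+ d_i$, and then rank-nullity yields $\dim \ker \gamma_j = \sum_i [-b_{j,i}]_+ d_i - \operatorname{rk}\gamma_j$, producing exactly the required expression for $g_j$.

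The only delicate point is keeping the indexing conventions of \eqref{eq:QB} and \eqref{eq:in-out-spaces} straight (and remembering that skew-symmetry is what lets $[b_{i,j}]_+$ be rewritten as $[-b_{j,i}]_+$). Beyond this routine exponent comparison, the corollary is an immediate consequence of Theorem~\ref{th:g-F-rep}, so I do not anticipate any substantive obstacle.
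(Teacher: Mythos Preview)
Your proposal is correct and follows essentially the same route as the paper's own proof: specialize \eqref{eq:xjt=F/F} and \eqref{eq:yhat-0} to $\PP=\{1\}$, invoke Theorem~\ref{th:g-F-rep}, and rewrite $g_i$ via \eqref{eq:g-M} as $\dim M_{\rm out}(i) - \operatorname{rk}\gamma_i - d_i = \sum_j [-b_{i,j}]_+ d_j - \operatorname{rk}\gamma_i - d_i$. The paper compresses all of this into two lines, but the content is identical to your more detailed exponent comparison.
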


\begin{proof}
It suffices to rewrite \eqref{eq:g-M} as
$$g_i = \dim M_{\rm out}(i) - {\rm rk} \gamma_i - \dim M(i)
= \sum_j [-b_{i,j}]_+ d_j - {\rm rk} \gamma_i - d_i  \ ,$$
and apply \eqref{eq:xjt=F/F} and \eqref{eq:yhat-0}.
\end{proof}

\begin{remark}
If the quiver $Q(B)$ has no oriented cycles then $S = 0$, hence
$\gamma_i = 0$ for all $i$.
In this case \eqref{eq:CC-formula-general} specializes to the
Caldero-Chapoton formula for cluster variables
(see~\cite{cc})
 obtained in this
generality in \cite[Theorem~3]{ck}.
\end{remark}

Recall that the \emph{denominator vector} of a cluster
variable~$z$ with respect to the initial cluster $(x_1, \dots, x_n)$
is the integer vector $(d_1(z), \dots, d_n(z))$ such that
$$z = \frac{P(x_1, \dots, x_n)}{x_1^{d_1(z)} \cdots x_n^{d_n(z)}},$$
where $P$ is a polynomial not divisible by any~$x_i$.
Conjecture~7.17 in \cite{ca4} claims that if $z$ does not belong to the initial cluster
then the denominator vector of~$z$ is equal to the multidegree of the corresponding
$F$-polynomial.
By Proposition~\ref{pr:F-ct-ht} and Theorem~\ref{th:g-F-rep}, this
conjecture is equivalent to the equality
\begin{equation}
\label{eq:denom-dim}
d_i(x_{\ell;t}) = d_i = \dim M(i)
\end{equation}
(in the notation of Corollary~\ref{cor:CC-formula}).
It was shown in \cite{ck} in the case where $Q(B)$ has no oriented cycles,
that $\eqref{eq:CC-formula-general}$ implies
$\eqref{eq:denom-dim}$. A direct proof of this was given in~\cite[Theorem 10]{hubery}.
In full generality, \eqref{eq:denom-dim} was disproved by a counterexample in \cite{fk}
(based on the ideas in \cite{bmr}).
Using Theorem~\ref{th:g-F-rep}, we obtain the following partial result.

\begin{corollary}
\label{cor:denom-dim-inequality}
In the notation of Corollary~\ref{cor:CC-formula}, we have the inequality
\begin{equation}
\label{eq:denom-dim-inequality}
d_i(x_{\ell;t}) \leq d_i \ .
\end{equation}
Furthermore, a necessary condition for the equality in
\eqref{eq:denom-dim-inequality} is the existence of a quiver
subrepresentation $N$ of $M$ such that
\begin{equation}
\label{eq:necessary-den-dim-equality}
\ker \gamma_i \subseteq N_{\rm out}(i), \quad
\gamma_i ( N_{\rm out}(i)) =  N_{\rm in}(i) \ .
\end{equation}
\end{corollary}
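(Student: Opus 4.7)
My plan is to analyze the Laurent expansion of $x_{\ell;t}$ provided by Corollary~\ref{cor:CC-formula} and pin down the smallest exponent of $x_i$ that can occur. Using $\dim N_{\rm in}(i) = \sum_j [b_{i,j}]_+ e_j$ and $\dim N_{\rm out}(i) = \sum_j [-b_{i,j}]_+ e_j$ for any $N \in \Gr_\e(M)$, the $x_i$-exponent in the $\e$-summand on the right-hand side of \eqref{eq:CC-formula-general} equals
\[-d_i + \dim N_{\rm in}(i) + \dim (M_{\rm out}(i)/N_{\rm out}(i)) - {\rm rk}\,\gamma_i.\]
Consequently \eqref{eq:denom-dim-inequality} will follow once I establish the key inequality
\[{\rm rk}\,\gamma_i \leq \dim N_{\rm in}(i) + \dim (M_{\rm out}(i)/N_{\rm out}(i))\]
for every $N \in \Gr_\e(M)$, which, using ${\rm rk}\,\gamma_i = \dim M_{\rm out}(i) - \dim \ker \gamma_i$, is equivalent to $\dim N_{\rm out}(i) \leq \dim N_{\rm in}(i) + \dim \ker \gamma_i$.

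The main (essentially the only) obstacle is to verify that $\gamma_i$ carries $N_{\rm out}(i)$ into $N_{\rm in}(i)$: this is not immediate, since $\gamma_i$ is built from the potential $S$ and not from the arrows alone, so quiver-subrepresentation compatibility does not manifestly apply. The point is that each component $\gamma_{a,b} = (\partial_{ba} S)_M$ (cf.\ \eqref{eq:gamma-ab}, \eqref{eq:partial-ba}) is the action on $M$ of the element $\partial_{ba}S$ of the complete path algebra $R\langle\langle A \rangle\rangle$, namely a linear combination of paths from $h(b)$ to $t(a)$ convergent in the ${\mathfrak m}$-adic topology and reducing to a finite sum on any element of $M$ by nilpotency. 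Since $N$ is a quiver subrepresentation, it is stable under the action of every arrow, hence of every path, and hence of every element of $R\langle\langle A \rangle\rangle$; thus $\gamma_{a,b}(N(h(b))) \subseteq N(t(a))$, and summing yields $\gamma_i(N_{\rm out}(i)) \subseteq N_{\rm in}(i)$. Rank-nullity applied to the restriction $\gamma_i|_{N_{\rm out}(i)}\colon N_{\rm out}(i) \to N_{\rm in}(i)$ then gives
\[\dim N_{\rm out}(i) = \dim (N_{\rm out}(i) \cap \ker \gamma_i) + \dim \gamma_i(N_{\rm out}(i)) \leq \dim \ker \gamma_i + \dim N_{\rm in}(i),\]
which is the desired inequality and proves \eqref{eq:denom-dim-inequality}.

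For the equality assertion, suppose $d_i(x_{\ell;t}) = d_i$. Then the Laurent expansion of $x_{\ell;t}$ contains a monomial with $x_i$-exponent exactly $-d_i$, whose coefficient is the sum of $\chi(\Gr_\e(M))$ over those $\e$ whose $\e$-summand attains this lower bound. At least one such $\e$ must satisfy $\chi(\Gr_\e(M)) \neq 0$ (otherwise this coefficient would vanish), so $\Gr_\e(M) \neq \emptyset$ and I may pick $N \in \Gr_\e(M)$. For this $N$ the key inequality above is tight, which forces both steps in the rank-nullity chain to be equalities: $N_{\rm out}(i) \cap \ker \gamma_i = \ker \gamma_i$ (i.e.\ $\ker \gamma_i \subseteq N_{\rm out}(i)$) and $\gamma_i(N_{\rm out}(i)) = N_{\rm in}(i)$, which is precisely \eqref{eq:necessary-den-dim-equality}.
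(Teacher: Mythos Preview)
Your proof is correct and follows essentially the same approach as the paper's: both rewrite the $x_i$-exponent in the $\e$-summand of \eqref{eq:CC-formula-general} as $-d_i + \dim \ker\gamma_i + \dim N_{\rm in}(i) - \dim N_{\rm out}(i)$ and then use the inclusion $\gamma_i(N_{\rm out}(i)) \subseteq N_{\rm in}(i)$ together with rank--nullity to see that this quantity is $\geq -d_i$, with equality forcing \eqref{eq:necessary-den-dim-equality}. The paper packages the final step as the identity $\dim \ker\gamma_i + \dim N_{\rm in}(i) - \dim N_{\rm out}(i) = \dim\bigl(\ker\gamma_i/(\ker\gamma_i \cap N_{\rm out}(i))\bigr) + \dim\bigl(N_{\rm in}(i)/\gamma_i(N_{\rm out}(i))\bigr)$, which is just your rank--nullity computation rearranged; you are somewhat more explicit than the paper about \emph{why} $\gamma_i$ respects subrepresentations (the paper invokes this fact from the proof of Lemma~\ref{lem:g-F-mutation}).
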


\begin{proof}
In view of \eqref{eq:CC-formula-general}, we have
\begin{equation}
\label{eq:denom-dim-difference}
d_i - d_i(x_{\ell;t}) =
\min_{\e} (- {\rm rk} \gamma_i + \sum_j  ([b_{i,j}]_+ e_j +
[-b_{i,j}]_+ (d_j - e_j))) \ ,
\end{equation}
where the minimum is over all dimension vectors $\e$ such that
$\chi(\Gr_\e (M)) \neq 0$.
In particular, $\Gr_\e (M)$ must be nonempty, i.e., $M$
must have a subrepresentation $N$ with $e_i = \dim N(i)$ for all $i$.
In terms of $N$, we have
$$\sum_j  [b_{i,j}]_+ e_j = \dim N_{\rm in}(i), \quad
\sum_j  [-b_{i,j}]_+ (d_j - e_j) = \dim M_{\rm out}(i) -
\dim N_{\rm out}(i) \ .$$
Therefore,
\begin{align*}
&- {\rm rk} \gamma_i + \sum_j  ([b_{i,j}]_+ e_j +
[-b_{i,j}]_+ (d_j - e_j))\\
&= - {\rm rk} \gamma_i
+ \dim M_{\rm out}(i) + \dim N_{\rm in}(i) -
\dim N_{\rm out}(i)\\
&= \dim \ker \gamma_i + \dim N_{\rm in}(i) -
\dim N_{\rm out}(i)\\
&= \dim \frac{\ker \gamma_i}{\ker \gamma_i \cap N_{\rm out}(i)}
+ \dim \frac{N_{\rm in}(i)}{\gamma_i (N_{\rm out}(i))} \ ,
\end{align*}
making clear both assertions in question.
\end{proof}

\begin{remark}
A counterpart of Corollary~\ref{cor:denom-dim-inequality} in the
context of $2$-Calabi-Yau categories was obtained in
\cite[Proposition~5.8]{fk}.
\end{remark}

We conclude this section by applying the above results for an explicit construction
of a special class of QP-representations corresponding to cluster variables.
Let $T$ be a subset of vertices of $Q = Q(B)$ such that the induced
subgraph on~$T$ is a tree; in particular, $b_{i,j} \in \{0, \pm
1\}$ for $i,j \in T$, so inside~$T$ there are no multiple arrows.
Without loss of generality, we can assume that $T = [1,\ell]
\subseteq [1,n] = Q_0$, and that each $i \in T$ is a
\emph{leaf} of the subtree of $T$ on vertices $[i,\ell]$; in other
words, for each $i \in [1,\ell-1]$ there is a a unique $j \in [i+1,\ell]$
connected by an edge with~$i$.
Let $M = M_T$ be a $Q$-representation such that $M(i) = \C$ for $i
\in T$, $M(i) = 0$ for $i \notin T$, and $a_M: M(t(a)) \to
M(h(a))$ is an isomorphism whenever $h(a)$ and $t(a)$ belong to~$T$.
The condition that~$T$ is a tree implies that $M$ is a
$\Pcal(Q,S)$-module for any potential~$S$ (since every cyclic
derivative $\partial_a S$ is a linear combination of paths from
$h(a)$ to $t(a)$, and every such path acts as~$0$ in~$M$).

\begin{proposition}
\label{pr:one-dim-reps}
Let $t_0 \overunder{1}{} t_1 \overunder{2}{}  \cdots \overunder{\ell}{} t_\ell = t$
be a path in $\TT_n$.
Then
\begin{equation}
\label{eq:dim-one-rep-cluster-variable}
\M_{\ell;t}^{B;t_0} = (M_T, 0) \ .
\end{equation}
\end{proposition}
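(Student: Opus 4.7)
The plan is to proceed by induction on $\ell$, using the recursive description \eqref{eq:M-cluster-variable} together with the involution property \eqref{eq:muk-rep-involution} to reduce each step to a single explicit mutation computation.

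For the base case $\ell=1$, the proposition reduces to checking that $\mu_1(\mathcal{S}_1^-(Q(B^{(1)}),S^{(1)})) = (M_{\{1\}},0)$, where $B^{(1)}=\mu_1(B)$. Since the negative simple has $M=0$, all maps in the triangle \eqref{eq:triangle} vanish, and the formulas \eqref{eq:new-Mk}--\eqref{eq:alpha-beta-mutated} immediately produce the positive simple representation at vertex $1$, which is $M_{\{1\}}$.

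For the inductive step, set $T'=[2,\ell]$ and $B^{(1)}=\mu_1(B)$. I would first verify, by direct inspection of \eqref{eq:B-mutation}, that the submatrix of $B^{(1)}$ indexed by $T'$ equals the corresponding submatrix of $B$. Indeed, since $1$ is a leaf of the subtree of $T$, only the unique neighbor $j(1)\in T'$ has $b_{i,1}\neq 0$ for $i\in T'$; hence the correction term $[b_{i,1}]_+[b_{1,j}]_+ - [-b_{i,1}]_+[-b_{1,j}]_+$ vanishes for all $i,j\in T'$. Consequently, after the obvious relabeling of indices, $T'$ satisfies the hypotheses of the proposition with respect to $B^{(1)}$ and the shortened path at $t_1$, and the induction hypothesis yields $\M_{\ell;t}^{B^{(1)};t_1}=(M_{T'},0)$. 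Combining \eqref{eq:Mlt-mutation} with \eqref{eq:muk-rep-involution} then gives $\M_{\ell;t}^{B;t_0}=\mu_1((M_{T'},0))$.

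To finish, I would compute $\mu_1((M_{T'},0))$ directly. Since $M_{T'}(1)=0$, the triangle at $k=1$ in $Q(B^{(1)})$ collapses: $\alpha_1=\beta_1=\gamma_1=0$ and $\ker\alpha_1=M_{\mathrm{in}}(1)$, while every other subspace entering \eqref{eq:new-Mk} is zero. Among the neighbors of $1$, only $j(1)$ has $M_{T'}(j(1))\neq 0$, and the single edge between them in $Q(B^{(1)})$ contributes to exactly one of $M_{\mathrm{in}}(1), M_{\mathrm{out}}(1)$ according to its orientation; the two possibilities are symmetric, so assume $M_{\mathrm{in}}(1)=\C$ via an arrow $a:j(1)\to 1$. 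Then \eqref{eq:new-Mk} and \eqref{eq:alpha-beta-mutated} produce $\overline M(1)=\C$, $\overline V(1)=0$, with $\overline\beta_1$ acting as the identity on the reversed arrow $a^\star:1\to j(1)$ of $\widetilde Q$, and with every composite arrow $[ba]$ at $1$ acting as zero on $\widetilde M$ because it factors through $M_{T'}(1)=0$. Applying the Splitting Theorem \eqref{eq:splitting-theorem} to strip off the trivial summand yields the desired $(M_T,0)$ as a QP-representation of $(Q(B),S)$.

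The main obstacle is the last step: making sure that the reduction in \eqref{eq:splitting-theorem} identifies the premutation $\widetilde\mu_1((M_{T'},0))$ with $(M_T,0)$ on the nose, rather than with an isomorphic object carrying spurious data. The simplifying feature is the vanishing $M_{T'}(1)=0$: every composite arrow $[ba]$ at $1$ acts trivially on $\widetilde M$, so the trivial QP summand isolated by the Splitting Theorem is precisely the one containing these composite arrows (paired through the potential term $\Delta$ of \eqref{eq:mu-k-S} with the corresponding $a^\star b^\star$), leaving the tree structure on $M_T$ and its single new edge at $1$ intact.
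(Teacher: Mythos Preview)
Your proof is correct and follows the same inductive strategy as the paper, but you run the induction in the opposite direction. The paper observes that it suffices to show $\mu_\ell\circ\cdots\circ\mu_1\,(M_T,0)=\mathcal S_\ell^-$, and then checks that a single mutation $\mu_1$ applied to $(M_T,0)$ \emph{removes} the leaf~$1$, producing $(M_{T'},0)$; you instead apply $\mu_1$ to $(M_{T'},0)$ and show that it \emph{adds} the leaf~$1$ back, producing $(M_T,0)$.

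Both directions require the same verification that the tree hypothesis on $T'$ survives passage from $B$ to $B^{(1)}=\mu_1(B)$, which you spell out and the paper leaves implicit. The trade-off is that the paper's direction is computationally lighter: mutating at a vertex where $M_T(1)=\C$ yields $\overline M(1)=0$ directly from \eqref{eq:new-Mk}, so the reversed arrows at~$1$ automatically act by zero and the reduction step is harmless. In your direction you mutate at a vertex where $M_{T'}(1)=0$ and must produce $\overline M(1)=\C$ together with a specific isomorphism on the new edge, which is why you need the final paragraph about the Splitting Theorem. (Incidentally, your claim that ``the two possibilities are symmetric'' glosses over the fact that one orientation uses the summand $\ker\alpha_1/\im\gamma_1$ of \eqref{eq:new-Mk} and the other uses $\ker\gamma_1/\im\beta_1$; both computations succeed, but they are not literally the same.) The paper's choice of direction buys exactly the simplification you identify as ``the main obstacle'' in your last paragraph.
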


\begin{proof}
We need to show that the sequence of mutations $\mu_\ell \circ
\cdots\circ\mu_1$ takes the QP-re\-pre\-sen\-ta\-tion $(M_T, 0)$ to the
negative simple representation $\mathcal S_\ell^-$ (see
\eqref{eq:neg-simple}).
For $\ell = 1$, the representation $M_T$ is just the (positive)
simple module $\mathcal S_\ell$; using \eqref{eq:new-Mk}, we see that
the mutation $\mu_\ell$ turns it into $\mathcal S_\ell^-$.
For $\ell > 1$, again using \eqref{eq:new-Mk}, we see that the mutation $\mu_1$
turns $M_T$ into $M_{T'}$, where the tree $T'$ is obtained from~$T$
by removing the leaf~$1$.
The proof is finished by induction on~$\ell$.
\end{proof}

\begin{corollary}
\label{cor:F-g-d-one-dim}
In the situation of Proposition~\ref{pr:one-dim-reps}, the
$F$-polynomial $F_{\ell;t}^{B;t_0}$ is given as
follows:
\begin{equation}
\label{eq:F-dim-one}
F_{\ell;t}^{B;t_0}(u_1, \dots, u_n) = \sum_Z \prod_{i \in Z}u_i \ ,
\end{equation}
where $Z$ runs over all subsets of~$T = [1, \ell]$ with the
property that if $j \in Z$ then $i \in Z$ for every arrow $j \to i$ in~$T$.
Furthermore, the denominator vector of the cluster variable $x_{\ell;t}$
is the indicator vector of $[1,\ell]$ (that is, $d_i(x_{\ell;t}) = 1$
for $i \in [1,\ell]$, and $d_i(x_{\ell;t}) = 0$ for $i \in [\ell+1,n]$).
\end{corollary}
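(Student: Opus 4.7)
The plan is to use Proposition~\ref{pr:one-dim-reps} together with Theorem~\ref{th:g-F-rep}, which give $F_{\ell;t}^{B;t_0} = F_{M_T}$ and reduce everything to an explicit analysis of the representation $M_T$.

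To establish \eqref{eq:F-dim-one}, I would enumerate the quiver Grassmannians of $M_T$ directly. Since each $M_T(i)$ is either $\C$ or $0$, a subrepresentation $N$ is determined by the set $Z = \{i \in T : N(i) \neq 0\}$, and such a set-theoretic choice gives an honest subrepresentation precisely when the inclusions $a_{M_T}(N(t(a))) \subseteq N(h(a))$ hold for every arrow $a$ of $Q$. For arrows with at least one endpoint outside $T$ this is automatic (one side is zero); for arrows $a: j \to i$ with both endpoints in $T$ the map $a_{M_T}$ is an isomorphism $\C \to \C$, and the condition reduces to $j \in Z \Rightarrow i \in Z$. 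Thus subrepresentations are in bijection with the closed subsets described in the statement, each contributing a single point, and hence Euler characteristic $1$, to its quiver Grassmannian. Formula \eqref{eq:F-dim-one} then follows from \eqref{eq:F-M}.

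For the denominator vector, Corollary~\ref{cor:denom-dim-inequality} immediately gives $d_i(x_{\ell;t}) \leq d_i$, which settles the case $i \notin T$ (where $d_i = 0$ and denominator vectors are automatically non-negative). The remaining task is to prove $d_i(x_{\ell;t}) \geq 1$ for $i \in T$; here I would apply formula \eqref{eq:denom-dim-difference}. The key input is that $\gamma_i$ vanishes identically on $M_T$ whenever $i \in T$: each component $\gamma_{a,b}$ is the action of $\partial_{ba}S$, a sum of paths in $Q$ from $h(b)$ to $t(a)$; any such path acting non-trivially on $M_T$ must have all its vertices in $T$, and combined with the arrows $a: t(a) \to i$ and $b: i \to h(b)$ (whose endpoints, including $i$ itself, lie in $T$) this produces an oriented cycle inside $T$, contradicting the tree hypothesis. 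This is essentially the same argument used just before Proposition~\ref{pr:one-dim-reps} to show that $M_T$ is a $\Pcal(Q,S)$-module.

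With $\operatorname{rk}\gamma_i = 0$, the right-hand side of \eqref{eq:denom-dim-difference} reduces to the minimum over closed subsets $Z \subseteq T$ of
$$\sum_j [b_{i,j}]_+\, e_j + \sum_j [-b_{i,j}]_+\,(d_j - e_j),$$
with $e_j = 1$ if $j \in Z$ and $0$ otherwise. It suffices to exhibit a single closed $Z_i \subseteq T$ annihilating both sums: take $Z_i$ to be the smallest closed subset of $T$ containing every $T$-out-neighbor of~$i$. By construction no $T$-out-neighbor of~$i$ lies outside $Z_i$, killing the second sum; and if some $T$-in-neighbor $j$ of~$i$ belonged to $Z_i$, it would be reachable along $T$-arrows from some out-neighbor $j_0$, producing an oriented cycle $i \to j_0 \to \cdots \to j \to i$ entirely within $T$ --- impossible. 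Hence the minimum equals $0$ and $d_i(x_{\ell;t}) = 1$. The main technical obstacle is the vanishing of $\gamma_i$ on $M_T$ and the combinatorial construction of $Z_i$, which both rest critically on the absence of cycles in $T$; once these are in place the rest is routine bookkeeping with the Caldero--Chapoton-type formula.
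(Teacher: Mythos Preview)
Your argument is essentially the paper's own proof: the enumeration of subrepresentations of $M_T$ as closed subsets is identical, and for the denominator vector at $i\in T$ you construct the same subrepresentation (the vertices reachable from~$i$, or equivalently from its out-neighbors) and use it in the formula \eqref{eq:denom-dim-difference}, where the paper instead phrases the same computation via the equivalent condition \eqref{eq:necessary-den-dim-equality}.

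One remark on the case $i\notin T$: your assertion that ``denominator vectors are automatically non-negative'' is not an established fact at this level of generality (it is part of \cite[Conjecture~7.4]{ca4}), so the step $d_i(x_{\ell;t})\geq 0$ is not fully justified; the paper's proof likewise passes over this point without argument, simply asserting that Corollary~\ref{cor:denom-dim-inequality} reduces the problem to $i\in T$.
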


\begin{proof}
By Theorem~\ref{th:g-F-rep} and Proposition~\ref{pr:one-dim-reps},
we have $F_{\ell;t}^{B;t_0} = F_{M_T}$.
The equality \eqref{eq:F-dim-one} is then immediate from the
definition \eqref{eq:F-M}: clearly, the quiver Grassmannian
$\Gr_\e (M_T)$ consists of one point if $\e$ is the indicator
vector of a subset $Z$ as in \eqref{eq:F-dim-one}, otherwise
$\Gr_\e (M_T) = \emptyset$.

Turning to the denominator vector, in view of
Corollary~\ref{cor:denom-dim-inequality}, it is enough to show
that $d_i(x_{\ell;t}) = 1$ for $i \in T$. Fix a vertex $i \in T$,
and let $Z$ be the subset of all vertices $j \in T$ that can be
reached from~$i$ by a directed path in~$T$. Let $N = \oplus_{j \in
Z} M_T(j)$. Then $N$ is a quiver subrepresentation of $M_T$. The
fact that $T$ is a tree implies easily that $N$ satisfies
\eqref{eq:necessary-den-dim-equality} (indeed, we have $\gamma_i =
0$,
 $N_{\rm out}(i) =  M_{\rm out}(i)$, and $N_{\rm in}(i) = 0$).
Furthermore, $N$ is the only element in its quiver Grassmannian,
which makes \eqref{eq:necessary-den-dim-equality} not only
necessary but also a sufficient condition for the equality
$d_i(x_{\ell;t}) = d_i(M) = 1$.
\end{proof}

\begin{remark}
\label{rem:g-dim-one}
The computation of the $\g$-vector of $M_T$ is more involved,
since the map $\gamma_i$ is not necessarily~$0$ if $i \notin T$.
However, $\gamma_i = 0$ for $i \in T$, hence for $i \in T$ the component
$g_i$ of $\g_{\ell;t}^{B;t_0}$ is equal to $|\{j \in T: i \to j\}| - 1$.
\end{remark}

\section{Mutations preserve homomorphisms modulo confined ones}
\label{sec:homs-modulo-confined}

Let $\M = (M,V)$ and $\N = (N,W)$ be QP-representations of a reduced QP $(Q,S)$.
We fix a vertex $k \in Q_0$ and assume that~$Q$ has no oriented
$2$-cycles through~$k$.
Thus, the mutated QP $(\overline Q, \overline S) = \mu_k(Q,S)$ is
well-defined, as well as its QP-representations
$\overline \M = (\overline M,\overline V) = \mu_k(\M)$ and
$\overline {\N} = (\overline {N},\overline {W}) = \mu_k(\N)$.

We abbreviate
%$$\mathcal{H}(\M,\M') = \mathcal{H}(M,M') = \Hom_Q (M,M'),$$
%and
$$M(\widehat k) = \bigoplus_{i \neq k} M(i),$$
and say that a homomorphism $\varphi \in \Hom_Q (M,N)$
%\mathcal{H}(M,M')$
is \emph{confined to $k$} if $\varphi (m) = 0$ for
$m \in M(\widehat k)$.
Denote the space of such homomorphisms
by $\Hom_Q^{[k]}(M,N)$.
%$\mathcal{H}^{[k]}(\M,\M') = \mathcal{H}^{[k]}(M,M')$.
Restricting $\varphi$ to $M(k)$ yields a vector space isomorphism
\begin{equation}
\label{eq:confined-homs}
%\mathcal{H}^{[k]}(M,M')
\Hom_Q^{[k]}(M,N) = \Hom_{\C}({\rm coker} \,\alpha_{k;M}, \ker \beta_{k;N}).
\end{equation}

The goal of this section is to prove the following proposition.
It was already established in \cite[Theorem~7.1]{birs} but the
present proof seems to be much simpler.

\begin{proposition}
\label{pr:mutation-preserves-homs-mod-k}
The mutation~$\mu_k$ induces an isomorphism
$$\Hom_Q(M,N)/\Hom_Q^{[k]}(M,N) =
\Hom_{\overline Q}(\overline M, \overline N)/
\Hom_{\overline Q}^{[k]}(\overline M,\overline N).$$
%$$\mathcal{H}(\M,\M')/\mathcal{H}^{[k]}(\M,\M') \to
%\mathcal{H}(\overline \M,\overline {\M'})/\mathcal{H}^{[k]}(\overline \M,\overline {\M'}).$$
\end{proposition}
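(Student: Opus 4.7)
The strategy is to construct an explicit linear map
\[
\Phi: \Hom_Q(M, N) \longrightarrow \Hom_{\overline Q}(\overline M, \overline N) / \Hom_{\overline Q}^{[k]}(\overline M, \overline N),
\]
show that it kills $\Hom_Q^{[k]}(M, N)$, and then invoke the involutivity $\mu_k^2 = \id$ from \eqref{eq:muk-rep-involution} to deduce that the induced map on quotients is an isomorphism.

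Given $\varphi \in \Hom_Q(M, N)$, I would define $\overline\varphi: \overline M \to \overline N$ by setting $\overline\varphi(i) = \varphi(i)$ for $i \neq k$, and at the vertex $k$, using the decomposition $\overline M(k) = \ker\gamma_{k;M}/\im\beta_{k;M} \oplus \im\gamma_{k;M} \oplus \ker\alpha_{k;M}/\im\gamma_{k;M} \oplus V(k)$ (and its analogue for $\overline N(k)$), taking $\overline\varphi(k)$ essentially block-diagonal: the first, second, and third blocks are induced naturally by $\varphi_{\rm out}(k)$, $\varphi_{\rm in}(k)$, and $\varphi_{\rm in}(k)$ respectively (well-defined since $\varphi_{\rm out}(k)$ preserves $\ker\gamma_{k;M}$ and $\im\beta_{k;M}$ while $\varphi_{\rm in}(k)$ preserves $\im\gamma_{k;M}$ and $\ker\alpha_{k;M}$), and the fourth block $V(k)\to W(k)$ is zero. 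Commutativity of $\overline\varphi$ with the composite arrows $[ba]$ is automatic since they act as $b_M a_M$. Commutativity with $\overline\alpha_k$ and $\overline\beta_k$ from \eqref{eq:alpha-beta-mutated} fails by terms involving the splitting data $\rho, \sigma$: a direct computation shows the failure is concentrated in two off-diagonal components---an $\im\gamma_{k;M}\to \ker\gamma_{k;N}/\im\beta_{k;N}$ piece from the $\overline\alpha_k$ equation, and a $\ker\alpha_{k;M}/\im\gamma_{k;M} \to \im\gamma_{k;N}$ piece from the $\overline\beta_k$ equation---each of which admits a unique correction term added to $\overline\varphi(k)$ that restores genuine commutativity and does not disturb the other relation.

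With $\Phi$ so defined, I would verify that if $\varphi$ is confined, corresponding via \eqref{eq:confined-homs} to a map $\coker\alpha_{k;M}\to\ker\beta_{k;N}$, then $\overline\varphi$ is also confined, by tracing the identifications $\coker\overline\alpha_{k;M} = \ker\alpha_{k;M}/\im\gamma_{k;M} \oplus V(k)$ and $\ker\overline\beta_{k;N} = \ker\gamma_{k;N}/\im\beta_{k;N} \oplus W(k)$ from \eqref{eq:overline-kernels-images}. Hence $\Phi$ descends to a linear map on quotients. Applying the same construction with $(Q, M, N)$ replaced by $(\overline Q, \overline M, \overline N)$ yields a candidate inverse, and $\mu_k^2(\M) \cong \M$ via the explicit right-equivalence of \cite[Theorem~10.13]{dwz} identifies the composition with the identity on quotients. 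The main obstacle is the middle paragraph: identifying the off-diagonal corrections, checking their uniqueness, and verifying that they do not interfere with each other; this is a bookkeeping exercise in block-matrix computations, but once organized, the proof flows routinely.
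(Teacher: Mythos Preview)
Your approach is viable and, with the block computations you outline, can be made to work: the two off-diagonal correction terms you identify (from $\im\gamma_{k;M}$ to $\ker\gamma_{k;N}/\im\beta_{k;N}$ and from $\ker\alpha_{k;M}/\im\gamma_{k;M}$ to $\im\gamma_{k;N}$) exist, are unique, and do not interfere with one another, exactly because $\overline\alpha_k$ has zero third and fourth components while $\overline\beta_k$ has zero first and fourth components. The inverse via involutivity also goes through, since on vertices $i\neq k$ the map $\overline{\overline\varphi}(i)$ is literally $\varphi(i)$, so the composition differs from the identity by a confined map.

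That said, the paper takes a markedly simpler route that avoids constructing $\overline\varphi(k)$ at all. It introduces an intermediate space: restricting any $\varphi\in\Hom_Q(M,N)$ to $M(\widehat k)=\bigoplus_{i\neq k}M(i)$ gives a $\mathcal P(Q,S)_{\widehat k,\widehat k}$-module map, and the image of this restriction is exactly the subspace $\Hom_{Q(\widehat k)}(M,N)$ of such maps that preserve $\ker\alpha_k$ and $\im\beta_k$, while the kernel is $\Hom_Q^{[k]}(M,N)$. Hence $\Hom_Q(M,N)/\Hom_Q^{[k]}(M,N)\cong\Hom_{Q(\widehat k)}(M,N)$. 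The paper then invokes the algebra isomorphism $\mathcal P(Q,S)_{\widehat k,\widehat k}\cong\mathcal P(\overline Q,\overline S)_{\widehat k,\widehat k}$ and the identities $\ker\overline\alpha_k=\im\beta_k$, $\im\overline\beta_k=\ker\alpha_k$ from \eqref{eq:overline-kernels-images} to see that this intermediate space coincides with $\Hom_{\overline Q(\widehat k)}(\overline M,\overline N)$. No splitting data, no correction terms, no appeal to involutivity: both quotients are identified with one and the same concrete space in a few lines. Your construction has the advantage of producing an explicit lift to a genuine $\overline Q$-homomorphism (modulo confined maps), which might be useful for other purposes, but for proving the proposition the paper's restrict-rather-than-lift strategy is considerably cleaner.
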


\begin{proof}
We can view a ${\mathcal P}(Q,S)$-module~$M$ as a module over the subalgebra
$${\mathcal P}(Q,S)_{\widehat k, \widehat k} =
\bigoplus_{i,j \neq k} {\mathcal P}(Q,S)_{i, j}.$$
Clearly, $M(\widehat k)$ is a
${\mathcal P}(Q,S)_{\widehat k, \widehat k}$-submodule of~$M$, and
so we have the restriction map
%$\varphi \mapsto \varphi_{\widehat k}$ from
$\rho: \Hom_Q(M,N) \to \Hom_{{\mathcal
P}(Q,S)_{\hat k, \hat k}}(M(\widehat k),N(\widehat k))$.
Denote
\begin{align*}
\Hom_{Q(\widehat k)}(M,N) =&
\{\varphi \in
\Hom_{{\mathcal P}(Q,S)_{\hat k, \hat k}}(M(\widehat k),N(\widehat k)):\\
&\varphi(\ker \alpha_{k;M}) \subseteq \ker \alpha_{k;N}, \,\,
\varphi({\rm im}\, \beta_{k;M}) \subseteq  {\rm im}\, \beta_{k;N}\}.
\end{align*}
As an easy consequence of the definitions, we have
$$\ker \rho = \Hom_Q^{[k]}(M,N), \quad
{\rm im} \, \rho = \Hom_{Q(\widehat k)}(M,N).$$
Thus, $\rho$ induces an isomorphism
\begin{equation}
\label{eq:modding-k-out}
\Hom_Q(M,N)/\Hom_Q^{[k]}(M,N) = \Hom_{Q(\widehat k)}(M,N).
\end{equation}

Now recall from \cite[Proposition~6.1, Corollary~6.6]{dwz}
that the mutation~$\mu_k$ induces an isomorphism between
${\mathcal P}(Q,S)_{\widehat k, \widehat k}$ and
${\mathcal P}(\overline Q, \overline S)_{\widehat k, \widehat k}$.
This isomorphism is explicitly described in the proof of Proposition~6.1
in \cite{dwz}: it preserves all arrows not incident to~$k$, and
sends each product $ba$ (for~$a$ an incoming, and $b$ an outgoing
arrow at~$k$) to the ``composite arrow" $[ba]$.
Identifying ${\mathcal P}(Q,S)_{\widehat k, \widehat k}$ and
${\mathcal P}(\overline Q, \overline S)_{\widehat k, \widehat k}$
with the help of this isomorphism, and
recalling the definition of $\overline M$ in
Section~\ref{sec:QP-background}, we see that the
${\mathcal P}(\overline Q, \overline S)_{\widehat k, \widehat
k}$-module structure on $\overline M(\widehat k)$ becomes identical to
the ${\mathcal P}(Q,S)_{\widehat k, \widehat k}$-module structure
on $M(\widehat k)$.
Furthermore, by \eqref{eq:overline-kernels-images} we have
$\ker \overline \alpha_k = {\rm im} \, \beta_k, \quad
{\rm im} \,  \overline \beta_k = \ker \alpha_k$.
Therefore, the subspace
$ \Hom_{Q(\widehat k)}(M,N) \subseteq
\Hom_{{\mathcal P}(Q,S)_{\hat k, \hat k}}(M(\widehat k),N(\widehat k))$
gets identified with $ \Hom_{\overline Q(\widehat k)}(\overline M, \overline {N})$.
This completes the proof of Proposition~\ref{pr:mutation-preserves-homs-mod-k}.
\end{proof}

The isomorphism in Proposition~\ref{pr:mutation-preserves-homs-mod-k}
can be viewed as functorial in the following way.
Let ${\mathcal C}(Q,S)$ be the category whose objects are
QP-representations
%${\mathcal M}=(M,V)$
of a QP $(Q,S)$, and the
morphisms are given by
\begin{equation*}
\Hom_{{\mathcal C}(Q,S)}((M,V),(N,W)) = \Hom_Q (M,N)\oplus \Hom_R (V,W).
\end{equation*}
For a vertex $k \in Q_0$, let ${\mathcal C}^{[\widehat k]}(Q,S)$
be the quotient category of ${\mathcal C}(Q,S)$ with the same
objects, and the morphisms given by
\begin{equation*}
\Hom_{{\mathcal C}^{[\hat k]}(Q,S)}((M,V),(N,W)) =
\frac{\Hom_{{\mathcal C}(Q,S)}((M,V),(N,W))}{\Hom_Q^{[k]} (M,N)\oplus \Hom_R (V,W)}\,.
\end{equation*}

\begin{proposition}
\label{pr:mutation-equivalence-of-categories}
The mutation $\mu_k$ induces an equivalence of categories
$$
\mu_k: {\mathcal C}^{[\widehat k]}(Q,S) \to
{\mathcal C}^{[\widehat k]}(\overline Q, \overline S).
$$
\end{proposition}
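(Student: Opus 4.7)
The plan is to upgrade the Hom-space isomorphism of Proposition~\ref{pr:mutation-preserves-homs-mod-k} to a functorial equivalence. On objects, $\mu_k$ acts as the mutation of QP-representations constructed in Section~\ref{sec:QP-background}; by \cite[Proposition~10.7]{dwz} this is well-defined up to isomorphism, so we fix once and for all a choice of splitting data \eqref{eq:rho}--\eqref{eq:sigma} for each object. Since $\Hom_R(V,W)$ is entirely quotiented away in $\mathcal{C}^{[\widehat k]}$, morphism spaces reduce to $\Hom_Q(M,N)/\Hom_Q^{[k]}(M,N)$, and I would define $\mu_k$ on morphisms as the composite
\[
\frac{\Hom_Q(M,N)}{\Hom_Q^{[k]}(M,N)} \xrightarrow{\bar{\rho}} \Hom_{Q(\widehat k)}(M,N) \xrightarrow{\Phi} \Hom_{\overline{Q}(\widehat k)}(\overline{M},\overline{N}) \xrightarrow{\bar{\rho}^{-1}} \frac{\Hom_{\overline{Q}}(\overline{M},\overline{N})}{\Hom_{\overline{Q}}^{[k]}(\overline{M},\overline{N})},
\]
where $\bar{\rho}$ is the isomorphism \eqref{eq:modding-k-out} induced by restriction $\varphi \mapsto \varphi|_{M(\widehat k)}$, and $\Phi$ is built from the algebra isomorphism $\mathcal{P}(Q,S)_{\widehat k,\widehat k}\simeq \mathcal{P}(\overline{Q},\overline{S})_{\widehat k,\widehat k}$ of \cite[Proposition~6.1]{dwz} together with the identifications $\im\beta_k = \ker\overline{\alpha}_k$ and $\ker\alpha_k = \im\overline{\beta}_k$ from \eqref{eq:overline-kernels-images}.

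Next I would verify functoriality and check the equivalence property. Functoriality is essentially automatic: $\bar{\rho}$ commutes with composition because restriction is a ring homomorphism on endomorphisms, $\Phi$ does because it is an algebra isomorphism, and identity morphisms are preserved throughout. For the equivalence, I would take as quasi-inverse the analogously defined functor $\overline{\mu}_k: \mathcal{C}^{[\widehat k]}(\overline{Q},\overline{S}) \to \mathcal{C}^{[\widehat k]}(Q,S)$ obtained by applying the same construction with the roles of $(Q,S)$ and $(\overline{Q},\overline{S})$ swapped. The involutivity property \eqref{eq:muk-rep-involution} then provides, for each object $\mathcal{M}$, a right-equivalence $\mu_k^2(\mathcal{M}) \cong \mathcal{M}$ from \cite[Theorem~10.13]{dwz}; these should assemble into a natural isomorphism $\overline{\mu}_k \circ \mu_k \simeq \mathrm{id}$, and symmetrically on the other side.

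The main obstacle I expect is verifying naturality of $\overline{\mu}_k \circ \mu_k \simeq \mathrm{id}$, because the right-equivalences from \cite[Theorem~10.13]{dwz} depend on auxiliary choices of splitting data and of connecting algebra automorphisms; two such choices differ by an automorphism of $\mathcal{P}(Q,S)$ preserving the Jacobian ideal, whose effect on any morphism $\varphi$ restricts to the identity on $M(\widehat k)$, so the discrepancy lies in $\Hom_Q^{[k]}$ and is killed by the quotient. A cleaner route, which I would adopt to bypass this verification, is to introduce a common intermediate category $\mathcal{E}$ whose objects are nilpotent $\mathcal{P}(Q,S)_{\widehat k,\widehat k}$-modules $M'$ equipped with two submodules (encoding $\im\beta_k \subseteq \ker\alpha_k$, which by \eqref{eq:overline-kernels-images} coincides with $\ker\overline{\alpha}_k \subseteq \im\overline{\beta}_k$ on the mutated side) and whose morphisms are those in $\Hom_{Q(\widehat k)}$. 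The proof of Proposition~\ref{pr:mutation-preserves-homs-mod-k} then identifies both $\mathcal{C}^{[\widehat k]}(Q,S)$ and $\mathcal{C}^{[\widehat k]}(\overline{Q},\overline{S})$ with $\mathcal{E}$, immediately yielding the desired equivalence without any separate naturality check.
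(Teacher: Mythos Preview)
Your setup is correct: the functor is defined on objects by QP-mutation and on morphisms via the identification \eqref{eq:modding-k-out} with $\Hom_{Q(\widehat k)}(M,N)$, and Proposition~\ref{pr:mutation-preserves-homs-mod-k} shows it is fully faithful. Where you diverge from the paper is in how you certify the equivalence. The paper does not construct a quasi-inverse at all; instead it invokes the standard categorical fact (stated there as Proposition~\ref{pr:Equivalence}, cf.\ \cite[Proposition~16.3.2]{schubert}) that a fully faithful, essentially surjective functor is an equivalence. Essential surjectivity is immediate from the involutivity \eqref{eq:muk-rep-involution} on objects. This sidesteps entirely the naturality verification you flag as the main obstacle.

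Two remarks on your proposed routes. In your first route, the assertion that the right-equivalences of \cite[Theorem~10.13]{dwz} ``restrict to the identity on $M(\widehat k)$'' is not obvious: those right-equivalences involve a nontrivial algebra automorphism of $R\langle\langle A\rangle\rangle$, so the induced map on $M(\widehat k)$ is a twist rather than the identity, and you would need to argue that this twist becomes the identity in the quotient category. In your second route, the intermediate category $\mathcal E$ does give fully faithful embeddings of both $\mathcal C^{[\widehat k]}(Q,S)$ and $\mathcal C^{[\widehat k]}(\overline Q,\overline S)$, but to conclude they are equivalent you must still check that their essential images in $\mathcal E$ coincide---which again amounts to essential surjectivity of $\mu_k$, i.e.\ exactly the input the paper uses. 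So neither route actually bypasses anything; the paper's appeal to the abstract criterion is the shortest path.
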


\begin{proof}
In view of \eqref{eq:modding-k-out}, we have
$$\Hom_{{\mathcal C}^{[\hat k]}(Q,S)}((M,V),(N,W)) = \Hom_{Q(\widehat k)}(M,N).$$
It follows from the proof of Proposition~\ref{pr:mutation-preserves-homs-mod-k}
that the mutation at~$k$ gives rise to a functor from ${\mathcal C}^{[\widehat k]}(Q,S)$
to ${\mathcal C}^{[\widehat k]}(\overline Q, \overline S)$.
The fact that this functor is an equivalence of categories
is a consequence of the following  basic result in the category theory (see~\cite[Proposition 16.3.2]{schubert}).

\begin{proposition}
\label{pr:Equivalence}
Let ${\mathcal C}$ and $\overline {\mathcal C}$ be categories, and
suppose ${\mathcal F}:{\mathcal C}\to \overline {\mathcal C}$ is a functor with the following properties:
\begin{enumerate}
\item For every object ${\overline{\M}}$ of $\overline {\mathcal C}$ there is an object
${\M}$ of ${\mathcal C}$
such that ${\mathcal F}({\M})$ is isomorphic to $\overline{\mathcal M}$;
\item For any pair of objects ${\mathcal M},{\mathcal N}$ of ${\mathcal C}$, the functor
${\mathcal F}$ induces a bijection
$$
\Hom_{{\mathcal C}}({\mathcal M},{\mathcal N})\cong
\Hom_{\overline{\mathcal C}}({\mathcal F}{\mathcal M},{\mathcal F}{\mathcal N}).
$$
\end{enumerate}
Then ${\mathcal F}$ is an equivalence of categories, i.e., there exists a functor
${\mathcal G}:\overline{\mathcal C}\to {\mathcal C}$
such that the the composition functors ${\mathcal G}\circ {\mathcal F}$ and
${\mathcal F}\circ {\mathcal G}$ are naturally equivalent
to the identity functors of ${\mathcal C}$ and $\overline{\mathcal C}$, respectively.
\end{proposition}
\end{proof}

\begin{remark}
\label{rem:axiom_of_choice}
The proof of Proposition~\ref{pr:Equivalence} is based on a strong version of the axiom of choice
(see~\cite[\S3.1, Remark 16.3.3]{schubert}):
for any class of sets and any equivalence relation on this class we can choose a representative in every class.
%We will accept this version of the axiom of choice.
\end{remark}

\section{The $E$-invariant}
\label{sec:E}

Let ${\mathcal M}=(M,V)$ and ${\mathcal N}=(N,W)$ be
QP-representations of the same nondegenerate QP $(Q,S)$.
We abbreviate
\begin{equation}
\label{eq:abbreviations}
\langle M, N \rangle = \dim \Hom_Q(M,N),
%\quad d_i(M) = \dim M(i).
%, \quad d_i^- = \dim V(i),
\end{equation}
and
\begin{equation}
\label{eq:abbreviations-2}
d_i(\M) = d_i(M) = \dim M(i), \quad d_i^-(\M) = \dim V(i),
\end{equation}
so that the components of the $\g$-vector $\g_\M = (g_1, \dots, g_n)$ are given by
\begin{equation}
\label{eq:gi-abbreviated}
g_i = g_i (\M) = \dim \ker \gamma_{i;M} - d_i(\M) + d_i^-(\M).
\end{equation}
We now define the integer function
\begin{equation}
\label{eq:E-abbreviated}
E^\inj(\M, \N) = \langle M, N \rangle + \sum_{i=1}^n d_i(\M) g_i(\N),
\end{equation}
and its symmetrized version
\begin{equation}
\label{eq:E-sym}
E^{\rm sym}(\M, \N) = E^\inj(\M, \N) + E^\inj(\N, \M).
\end{equation}

In view of \eqref{eq:Einvariant}, the $E$-invariant of a QP-representation
is given by
\begin{equation}
\label{eq:E-M-MN}
E(\M )=E^\inj(\M, \M) = \frac{E^{\rm sym}(\M, \M)}{2}.
\end{equation}

Now let $\mu_k(\M) = \overline \M = (\overline M, \overline V)$
and $\mu_k(\N) = \overline \N = (\overline N, \overline W)$ be
QP-representations (of the QP $(\overline Q, \overline S) =
\mu_k(Q,S)$) obtained from $\M$ and $\N$ by the mutation at a
vertex~$k$.

\begin{theorem}
\label{th:E-invariant}
We have
\begin{equation}
\label{eq:EMN-numeric}
E^\inj(\overline{\M},\overline{\N})-E^\inj(\M,\N)= h_k(\overline M)h_k(N)-h_k(M)h_k(\overline N).
\end{equation}
In particular, $E^{\rm sym}(\M, \N)$ and $E({\mathcal M})$ are invariant under QP-mutations, i.e.,
$$
E^{\rm sym}(\mu_k(\M), \mu_k(\N)) = E^{\rm sym}(\M, \N), \quad
E(\mu_k({\mathcal M}))=E({\mathcal M})
$$
for any vertex~$k$.
\end{theorem}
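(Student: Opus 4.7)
The plan is to first establish the numerical identity \eqref{eq:EMN-numeric} and then deduce the two invariance statements as easy corollaries. Indeed, once \eqref{eq:EMN-numeric} is known, adding it to the version obtained by swapping $\M$ and $\N$ causes the right-hand side to cancel, giving $E^{\rm sym}(\mu_k(\M), \mu_k(\N)) = E^{\rm sym}(\M, \N)$; specializing $\M = \N$ and using $2E(\M) = E^{\rm sym}(\M, \M)$ yields $E(\mu_k(\M)) = E(\M)$.

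To prove \eqref{eq:EMN-numeric}, I would separately compute the change in the Hom-term and the change in the quadratic term $\sum_i d_i(\M) g_i(\N)$ of $E^\inj$. For the Hom-term, Proposition~\ref{pr:mutation-preserves-homs-mod-k} gives
\[
\langle \overline M, \overline N \rangle - \langle M, N \rangle
= \dim \Hom_{\overline Q}^{[k]}(\overline M, \overline N) - \dim \Hom_Q^{[k]}(M, N),
\]
and by \eqref{eq:confined-homs} each confined Hom space factors as $\dim\coker\alpha_k \cdot \dim\ker\beta_k$; writing $\dim \ker \beta_{k;N} = -h_k(N)$ and similarly for $\overline N$, this becomes an explicit bilinear expression in the $\coker\alpha$'s and the $h_k$'s. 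For the $\sum d_i g_i$ term, I split the sum at $i = k$ versus $i \neq k$. For $i \neq k$, we have $d_i(\overline \M) = d_i(\M)$ (by \eqref{eq:M-unchanged}) and the transition formula from Lemma~\ref{lem:g-F-mutation} gives
\[
g_i(\overline \N) = g_i(\N) + [b_{i,k}]_+ g_k(\N) - b_{i,k} h_k(N).
\]
Summing against $d_i(\M)$ and using the skew-symmetry identities $\sum_i d_i(\M) [b_{i,k}]_+ = \dim M_{\rm out}(k)$ and $\sum_i d_i(\M)b_{i,k} = \dim M_{\rm out}(k) - \dim M_{\rm in}(k)$ converts the result into expressions involving $M_{\rm in}(k), M_{\rm out}(k), g_k(\N)$ and $h_k(N)$. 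For $i = k$ I use $g_k(\overline \N) = -g_k(\N)$ together with the explicit formulas \eqref{eq:new-Mk} and \eqref{eq:overline-kernels-images} to compute $d_k(\overline M)$. The crux is then to add the two contributions and simplify, using the identity $g_k(\M) = h_k(M) - h_k(\overline M)$ from Step~1 of the proof of Lemma~\ref{lem:g-F-mutation} to collapse everything to $h_k(\overline M) h_k(N) - h_k(M) h_k(\overline N)$.

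The principal obstacle is Step~2/Step~3: the mutation $\mu_k$ includes the reduction step, and formulas \eqref{eq:new-Mk}, \eqref{eq:overline-kernels-images} are stated for the premutation $\widetilde\mu_k(\M)$ rather than for the reduced $\mu_k(\M)$, so the value of $d_k(\overline M)$ is not immediately given by a closed form, and the bookkeeping of dimensions across the reduction must match up. Since $E^\inj$ is a right-equivalence invariant (both $\langle \cdot, \cdot \rangle$ and $\g$-vectors are, by the remarks opening Section~\ref{sec:qp-interpretation}), a clean way to handle this is to compute $E^\inj(\widetilde\mu_k\M, \widetilde\mu_k\N) - E^\inj(\M,\N)$ first — where the clean formulas for dimensions and kernels apply directly — and then observe that the right-equivalence between $\widetilde\mu_k(\M)$ and $\mu_k(\M) \oplus (\text{trivial part})$ contributes nothing: the trivial summand contributes zero to both the Hom space and to the quadratic form (all relevant $\g$-vector entries and $\beta_k$-kernels of the trivial piece vanish), and similarly $h_k$ is unchanged since the trivial summands occur at vertices other than $k$. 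This reduces the problem to a direct calculation with the premutation, where everything is transparent and the desired identity emerges after routine simplification.
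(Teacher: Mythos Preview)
Your proposal is correct and follows essentially the same route as the paper: both arguments combine Proposition~\ref{pr:mutation-preserves-homs-mod-k} with \eqref{eq:confined-homs} to compute $\langle \overline M, \overline N\rangle - \langle M, N\rangle$, use the $\g$-vector transition formula from Lemma~\ref{lem:g-F-mutation} together with $g_k = h_k - h'_k$ for the change in $\sum_i d_i g_i$, and then simplify.

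One minor comment on your handling of the reduction step. You correctly identify that the explicit formulas \eqref{eq:new-Mk}, \eqref{eq:overline-kernels-images} are derived for the premutation $\widetilde\mu_k$, but your proposed workaround (compute $E^\inj$ for $\widetilde\mu_k$ and then argue nothing changes under reduction) is more elaborate than necessary, and the phrase ``trivial summand'' is slightly misleading: reduction is a change of the ambient QP, not a direct-sum decomposition of the representation. The cleaner observation is that the reduction step removes only pairs of arrows not incident to~$k$, so the spaces $\overline M(k)$, the maps $\overline\alpha_k$, $\overline\beta_k$, and hence $h_k(\overline M)$, $d_k(\overline M)$, and $\dim\coker\alpha_{k;\overline M}$, are identical for $\widetilde\mu_k(\M)$ and $\mu_k(\M)$. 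As for the $g_i$ with $i\neq k$, Lemma~\ref{lem:g-F-mutation} is already stated and proved for the full mutation $\mu_k$ (its Step~3 establishes precisely that $\dim\ker\widetilde\gamma_i = \dim\ker\overline\gamma_i$), so you can simply cite it and work with $\overline\M = \mu_k(\M)$ throughout, as the paper does.
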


\begin{proof}
Our starting point is the equality
\begin{equation}
\label{eq:mod-simples-at-k-invariant}
\langle M,N \rangle + \dim ({\rm coker}\, \alpha_{k;M}) \cdot
h_k(N) = \langle \overline M, \overline N \rangle +
\dim ({\rm coker}\, \alpha_{k;\overline M})
\cdot h_K(\overline N) \ ,
\end{equation}
obtained by combining Proposition~\ref{pr:mutation-preserves-homs-mod-k} with
\eqref{eq:confined-homs} (and recalling the notation
\eqref{eq:h-ker-beta}).
We claim that $\dim ({\rm coker}\, \alpha_{k;M})$
and $\dim ({\rm coker}\, \alpha_{k;\overline M})$ are given by
\begin{equation}
\label{eq:dim-coker-alpha}
\dim ({\rm coker}\, \alpha_{k;M}) = h_k(\overline M) +
d_k(M) + d_k(\overline M) - \sum_{i} [-b_{i,k}]_+ d_i (M)  \ ,
\end{equation}
and
\begin{equation}
\label{eq:dim-coker-overline-alpha}
\dim ({\rm coker}\, \alpha_{k;\overline M}) =
h_k(M) +
d_k(M) + d_k(\overline M) - \sum_{i} [b_{i,k}]_+ d_i (M) \ .
\end{equation}
Note that \eqref{eq:dim-coker-overline-alpha} follows from
\eqref{eq:dim-coker-alpha} by interchanging $M$ with $\overline M$,
so it is enough to prove \eqref{eq:dim-coker-alpha}.
Using the equality $\ker \alpha_{k;M} = {\rm im}\, \beta_{k;\overline M}$
in \eqref{eq:overline-kernels-images}, we obtain
\begin{align*}
\dim ({\rm coker}\, \alpha_{k;M}) &= d_k(M) - {\rm rk}\, \alpha_{k;M}\\
&= d_k(M) - \dim M_{\rm in}(k) + \dim (\ker \alpha_{k;M})\\
&= d_k(M) - \sum_{i} [-b_{i,k}]_+ d_i (M) + {\rm rk}\, \beta_{k;\overline M}\\
&= d_k(M) - \sum_{i} [-b_{i,k}]_+ d_i (M) + d_k(\overline M) -
\dim (\ker \beta_{k;\overline M})\ ,
\end{align*}
which implies \eqref{eq:dim-coker-alpha} in view of
\eqref{eq:h-ker-beta}.

Using \eqref{eq:dim-coker-alpha} and
\eqref{eq:dim-coker-overline-alpha}, we can rewrite
\eqref{eq:mod-simples-at-k-invariant} as follows:
\begin{align*}
\langle \overline M, \overline N \rangle - \langle M,N \rangle
&= \dim ({\rm coker}\, \alpha_{k;M}) \cdot h_k(N) -
\dim ({\rm coker}\, \alpha_{k;\overline M}) \cdot h_K(\overline N)
\\&= (h_k(\overline M) +
d_k(M) + d_k(\overline M) - \sum_{i} [-b_{i,k}]_+ d_i (M)) \cdot h_K(N)
\\& - (h_k(M) + d_k(M) + d_k(\overline M) - \sum_{i} [b_{i,k}]_+ d_i(M))
\cdot h_K(\overline N) \ .
\end{align*}
In view of Lemma~\ref{lem:g-F-mutation}, we have $h_k(\overline N)
= h_k(N) - g_k(\N)$, which allows us to rewrite \eqref{eq:mod-simples-at-k-invariant}
further as
\begin{align}
\label{eq:prelim-identity}
&\langle \overline M, \overline N \rangle - \langle M,N \rangle
- (h_k(\overline M)h_k(N)-h_k(M)h_k(\overline N))\\
\nonumber
&=
(\sum_{i} b_{i,k} d_i(M)) \cdot h_k(N) +
(d_k(M) + d_k(\overline M) - \sum_{i} [b_{i,k}]_+ d_i(M)) \cdot g_k(\N)\ .
\end{align}
Comparing \eqref{eq:prelim-identity} with the desired
equality \eqref{eq:EMN-numeric}, we see that it remains to
show that the right hand side of \eqref{eq:prelim-identity} is equal to
$$\sum_{i} (d_i(M) g_i(\N) - d_i(\overline M) g_i(\overline \N)).$$
Using the equality $d_i(\overline M) = d_i(M)$
for $i \neq k$, and the assertion (proved in Lemma~\ref{lem:g-F-mutation})
that the transformation $\g_{\N} \mapsto \g_{\overline \N}$ is given
by \eqref{eq:g-transition}, we obtain
\begin{align*}
%\label{eq:lhs-crucial}
\sum_{i}& (d_i(M) g_i(\N) - d_i(\overline M) g_i(\overline \N))\\
&= (d_k(M) + d_k(\overline M)) \cdot g_k(\N) +
\sum_{i \neq k} d_i(M) (b_{i,k} h_k(N) - [b_{i,k}]_+ g_k(\N))\\
&=
(\sum_{i} b_{i,k} d_i(M)) \cdot h_k(N) + (d_k(M) + d_k(\overline M) - \sum_{i} [b_{i,k}]_+
d_i(M)) \cdot g_k(\N) \ ,
\end{align*}
finishing the proof of Theorem~\ref{th:E-invariant}.
\end{proof}

\begin{corollary}
\label{cor:EMzero}
If ${\mathcal M}$ is obtained by a sequence of mutations
from a negative QP-re\-pre\-sen\-ta\-tion $(\{0\},V)$, then $E({\mathcal M})=0$.
In particular, this is the case for any representation $\M_{\ell;t}^{B;t_0}$
given by \eqref{eq:M-cluster-variable}.
\end{corollary}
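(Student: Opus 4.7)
The plan is to deduce Corollary~\ref{cor:EMzero} as an almost immediate consequence of the mutation invariance established in Theorem~\ref{th:E-invariant}. The first step is to verify that $E(\mathcal{N}) = 0$ for any negative QP-representation $\mathcal{N} = (\{0\}, V)$. This is essentially tautological from the definition \eqref{eq:Einvariant}: since $M = \{0\}$, we have $\dim M(k) = 0$ for every $k \in Q_0$, so every term $g_k \dim M(k)$ in the sum vanishes; moreover $\Hom_Q(M,M) = \Hom_Q(\{0\},\{0\}) = 0$. Thus $E(\mathcal{N}) = 0$.

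The second step is to invoke Theorem~\ref{th:E-invariant}, which asserts $E(\mu_k(\M')) = E(\M')$ for every QP-representation $\M'$ and every vertex $k$. By hypothesis, $\M$ is obtained from some negative $\mathcal{N} = (\{0\},V)$ by a finite sequence of mutations $\mu_{k_p} \circ \cdots \circ \mu_{k_1}$. Applying the invariance inductively along this sequence (and using that each intermediate QP remains nondegenerate, so that the successive mutations are defined) yields
\begin{equation*}
E(\M) = E(\mu_{k_p} \circ \cdots \circ \mu_{k_1}(\mathcal{N})) = E(\mathcal{N}) = 0.
\end{equation*}

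For the ``in particular'' statement, the representation $\M_{\ell;t}^{B;t_0}$ is defined by \eqref{eq:M-cluster-variable} precisely as a sequence of mutations applied to the negative simple representation $\mathcal{S}_\ell^-(Q(t),S(t)) = (\{0\},V)$ with $\dim V(i) = \delta_{i,\ell}$, as in \eqref{eq:neg-simple-def}. This is exactly a negative QP-representation, so the preceding argument applies and gives $E(\M_{\ell;t}^{B;t_0}) = 0$. There is no serious obstacle here; the whole content of the corollary is packaged in Theorem~\ref{th:E-invariant}, and the only thing to check is the trivial base case on negative representations together with the fact that the construction \eqref{eq:M-cluster-variable} realizes $\M_{\ell;t}^{B;t_0}$ as a mutation of a negative QP-representation.
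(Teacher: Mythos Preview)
Your proof is correct and follows essentially the same approach as the paper's own proof, which simply notes that $E((\{0\},V))=0$ from the definition \eqref{eq:Einvariant} and then invokes the mutation invariance of Theorem~\ref{th:E-invariant}. Your version is just a more explicit unpacking of the same two steps.
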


\begin{proof}
By the definition \eqref{eq:Einvariant}, we have $E((\{0\},V))=0$, hence
$E({\mathcal M})=0$ as well.
\end{proof}

We conclude this section by one more invariance property of $E(\M)$.
For a quiver $Q=(Q_0,Q_1,h,t)$, we denote by $Q^{\rm op}$ the
\emph{opposite} quiver $(Q_0,Q_1,t,h)$ obtained from $Q$ by
reversing all arrows.
To distinguish the arrows of $Q^{\rm op}$ from those of $Q$, we
denote by $a^{\rm op}$ the arrow of $Q^{\rm op}$ corresponding to
an arrow~$a$ of~$Q$.
The correspondence $a \mapsto a^{\rm op}$ extends to an
anti-isomorphism $u \mapsto u^{\rm op}$ of completed path algebras
$R\langle \langle A \rangle \rangle \to
R\langle \langle A^{\rm op} \rangle \rangle$ (identical on the vertex
span~$R$).
In particular, every QP $(Q,S)$ gives rise to the
opposite QP $(Q^{\rm op}, S^{\rm op})$.
By the definition \eqref{eq:cyclic-derivative} of a cyclic
derivative, we have
$\partial_{a^{\rm op}} S^{\rm op} = (\partial_{a} S)^{\rm op}$
for any arrow~$a$ of~$Q$.
Thus, $J(S^{\rm op}) = (J(S))^{\rm op}$.
This implies that every QP-representation $\M = (M,V)$ of $(Q,S)$
gives rise to a QP-representation $\M^\star = (M^\star, V)$
of $(Q^{\rm op}, S^{\rm op})$ obtained from~$\M$ by replacing each
space $M(k)$ with its dual $M(k)^\star$, and setting
$(a^{\rm op})_{M^\star} = (a_M)^\star$ for any arrow $a$ of $Q$.

\begin{proposition}
\label{pr:E-invariant-star}
We have $E(\M^\star) = E(\M)$ for any QP-representation $\M$.
\end{proposition}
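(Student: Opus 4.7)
The plan is to compute each ingredient in $E(\M^\star)$ in terms of the corresponding ingredient for $\M$, and show that the discrepancy sums to zero by skew-symmetry of~$B$. First, I would observe that the usual vector-space duality gives a linear isomorphism $\Hom_Q(M,M) \cong \Hom_{Q^{\rm op}}(M^\star,M^\star)$ (a family $(\varphi_i)$ commuting with the $a_M$ dualizes to a family $(\varphi_i^\star)$ commuting with the $(a_M)^\star = (a^{\rm op})_{M^\star}$), so the first summand of $E$ is unchanged: $\langle M^\star, M^\star\rangle = \langle M,M\rangle$.

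Next I would identify the triangle maps of $\M^\star$ with the duals of those of $\M$. Under the natural identifications $M^\star_{\rm in}(k) = M_{\rm out}(k)^\star$ and $M^\star_{\rm out}(k) = M_{\rm in}(k)^\star$ (which follow directly from \eqref{eq:in-out-spaces} since incoming arrows at~$k$ in $Q^{\rm op}$ come from outgoing arrows at~$k$ in~$Q$), one has $\alpha_{k;M^\star} = \beta_{k;M}^\star$ and $\beta_{k;M^\star} = \alpha_{k;M}^\star$. The more delicate point is the equality $\gamma_{k;M^\star} = \gamma_{k;M}^\star$. Using \eqref{eq:partial-ba}, a direct inspection of cyclic paths yields $\partial_{a^{\rm op} b^{\rm op}} S^{\rm op} = (\partial_{ba} S)^{\rm op}$ for any pair $a,b \in Q_1$ with $h(a)=t(b)=k$; applied to $M^\star$, this matches the $(q,p)$-block of $\gamma_{k;M^\star}$ with the transpose-dual of the $(p,q)$-block of $\gamma_{k;M}$, so $\gamma_{k;M^\star} = \gamma_{k;M}^\star$ after the identifications above. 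As a byproduct, the relations \eqref{eq:triangle-relations} for $\M^\star$ follow by dualizing those for $\M$, confirming that $\M^\star$ is indeed a QP-representation of $(Q^{\rm op},S^{\rm op})$.

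From $\gamma_{k;M^\star} = \gamma_{k;M}^\star$ and the elementary identity $\dim\ker f^\star = \dim\ker f + \dim V - \dim U$ for $f\colon U\to V$, I would deduce
\begin{equation*}
\dim\ker\gamma_{k;M^\star} = \dim\ker\gamma_{k;M} + \dim M_{\rm in}(k) - \dim M_{\rm out}(k).
\end{equation*}
Using $\dim M_{\rm in}(k) - \dim M_{\rm out}(k) = \sum_j ([b_{k,j}]_+ - [b_{j,k}]_+) d_j(M) = \sum_j b_{k,j} d_j(M)$ and the definition \eqref{eq:g-M}, this gives $g_k(\M^\star) = g_k(\M) + \sum_j b_{k,j} d_j(M)$.

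Plugging into \eqref{eq:Einvariant}, and using $d_k(M^\star) = d_k(M)$ together with $\langle M^\star,M^\star\rangle = \langle M,M\rangle$, I get
\begin{equation*}
E(\M^\star) - E(\M) = \sum_{k,j} b_{k,j}\, d_j(M)\, d_k(M),
\end{equation*}
and the right-hand side vanishes because $B$ is skew-symmetric (swapping $k \leftrightarrow j$ negates each term). The only step that takes real care is the identification $\gamma_{k;M^\star} = \gamma_{k;M}^\star$, which requires unwinding the definition of $\partial_{ba}$ for $S^{\rm op}$; the rest is bookkeeping.
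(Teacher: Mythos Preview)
Your proof is correct and rests on the same two identifications the paper uses, namely $\Hom_{Q^{\rm op}}(M^\star,M^\star)\cong\Hom_Q(M,M)$ and $\gamma_{k;M^\star}=(\gamma_{k;M})^\star$. The only difference is organizational: the paper rewrites $E(\M)$ in terms of ${\rm rk}\,\gamma_i$ (via \eqref{eq:gi-thru-rank-gamma}) to obtain the expression \eqref{eq:E-thru-rank-gamma} in which every individual term is manifestly invariant under $\M\mapsto\M^\star$, whereas you keep the kernel-based form of $g_k$, compute the shift $g_k(\M^\star)-g_k(\M)=\sum_j b_{k,j}d_j(M)$, and kill the resulting bilinear form using skew-symmetry of $B$ directly---the same cancellation the paper hides in the symmetry of the edge sum $\sum_{a\in Q_1} d_{h(a)}d_{t(a)}$.
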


\begin{proof}
Using the notation in \eqref{eq:abbreviations-2} and
\eqref{eq:gi-abbreviated}, we can express $g_i(\M)$ as
\begin{equation}
\label{eq:gi-thru-rank-gamma}
g_i(\M) = \dim M_{\rm out}(i) - {\rm rk}\ \gamma_{i;M} - d_i(M) + d_i^-(\M),
\end{equation}
and $E(\M)$ as
\begin{align}
\label{eq:E-thru-rank-gamma}
E(\M) &= \langle M, M \rangle + \sum_{i=1}^n d_i(M) (d_i^-(\M) -
{\rm rk}\ \gamma_{i;M} - d_i(M))\\
\nonumber
& + \sum_{a \in Q_1} d_{h(a)}(M) d_{t(a)}(M) \ .
\end{align}
It remains to observe that passing from $\M$ to $\M^\star$
does not change any of the terms in \eqref{eq:E-thru-rank-gamma}
(since $\Hom_{Q^{\rm op}} (M^\star, M^\star)$ is isomorphic to
$\Hom_{Q} (M, M)$, and $\gamma_{i;M^\star}= (\gamma_{i;M})^\star$).
\end{proof}

\section{Lower bounds for the $E$-invariant}
\label{sec:inequalities}
Fix a QP-representation $\M = (M,V)$ of a reduced QP $(Q,S)$.
The goal of this section is to prove the lower bound \eqref{eq:E-lower-bound-beta}
for $E(\M)$.
Using the notation in \eqref{eq:abbreviations-2}, we can state the
result as follows (since $\M$ is fixed, we
allow ourselves to skip references to it in most of the formulas below).

\begin{theorem}
\label{th:E-lower-bound-beta}
The $E$-invariant of a QP-representation satisfies
\begin{equation}
\label{eq:E-lower-bound-beta-abbreviated}
E(\M) \geq \sum_{i \in Q_0}(\dim (\ker \beta_i) \cdot \dim (\ker\gamma_i/\im \beta_i)+
 d_i(M) \cdot d_i^-(\M)).
\end{equation}
\end{theorem}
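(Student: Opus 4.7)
The plan is to first reduce the statement to the case of a positive representation, and then to deduce the main inequality from a carefully set up complex encoding both Jacobian relations at each vertex.

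For the reduction, decompose $\M = (M,0) \oplus (0,V)$. Since $\Hom_Q$ and $\g$-vectors are both additive, $E^\inj(-,-)$ is biadditive with respect to direct sums of QP-representations, and a short computation using that $E^\inj((0,V),(M,0))=0$ gives
\[
E(\M) = E\bigl((M,0)\bigr) + \sum_k d_k(M)\cdot d_k^-(\M).
\]
This exactly matches the $\sum_k d_k d_k^-$ term on the right-hand side of \eqref{eq:E-lower-bound-beta-abbreviated}, so the problem reduces to showing $E((M,0)) \geq \sum_k \dim\ker\beta_k \cdot \dim H_k$, where $H_k:=\ker\gamma_k/\im\beta_k$. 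Substituting $g_k = \dim\ker\gamma_k - d_k$ and using $\dim\ker\beta_k = d_k - \dim\im\beta_k$ and $\dim H_k = \dim\ker\gamma_k - \dim\im\beta_k$, an elementary algebraic manipulation converts this equivalently into
\[
\dim\Hom_Q(M,M) \;\geq\; \sum_k d_k \cdot \dim\ker\beta_k \;-\; \sum_k \dim\im\beta_k\cdot\dim H_k. \qquad (\star)
\]

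To prove $(\star)$, the approach is to introduce the natural complex of $\C$-vector spaces
\[
\bigoplus_i \Hom_{\C}\bigl(M(i),M(i)\bigr) \xrightarrow{\Delta} \bigoplus_i \Hom_{\C}\bigl(M_{\rm in}(i),M(i)\bigr) \xrightarrow{\Delta'} \bigoplus_i \Hom_{\C}\bigl(M_{\rm out}(i),M(i)\bigr),
\]
where $\Delta$ is the standard ``commutator with $\alpha$'' differential $(\varphi_i) \mapsto (\alpha_i\varphi^{\rm in}_i - \varphi_i \alpha_i)$---whose kernel is $\Hom_Q(M,M)$---and $\Delta'$ is built from composition with the $\gamma_i$'s together with off-diagonal coupling through the $\alpha_j$'s at neighbouring vertices. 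The relation $\alpha_i\gamma_i = 0$ guarantees $\Delta'\circ\Delta = 0$, whence $\dim\Hom_Q(M,M) = \sum_i d_i^2 - \dim\im\Delta \geq \sum_i d_i^2 - \dim\ker\Delta'$. One then bounds $\dim\ker\Delta'$ from above by choosing, at each vertex~$k$, splittings adapted to $\ker\beta_k \subseteq M(k)$ and to the filtration $\im\beta_k \subseteq \ker\gamma_k \subseteq M_{\rm out}(k)$. Using that $\beta_k$ restricts to an isomorphism on a complement of $\ker\beta_k$, and that $\gamma_k\beta_k = 0$ forces $\im\beta_k \subseteq \ker\gamma_k$, one identifies explicit summands of $\bigoplus_i \Hom_{\C}(M_{\rm in}(i),M(i))$ mapping injectively under $\Delta'$ of the correct codimension, yielding $(\star)$ after cancellation.

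The main obstacle is this last dimension count. Because the off-diagonal blocks of $\gamma_k$ couple morphisms at different vertices, $\ker\Delta'$ is not a direct sum of purely ``local'' contributions, and the bookkeeping has to be organized globally while still isolating the vertex-by-vertex corrections $\dim\ker\beta_k\cdot\dim H_k$. It is precisely the second Jacobian relation $\gamma_k\beta_k = 0$ that prevents cross-term double-counting and forces the bound into the form $(\star)$.
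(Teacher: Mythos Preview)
Your overall architecture coincides with the paper's: a three-term sequence
\[
\Hom_R(M,M)\xrightarrow{\ \Phi\ }\bigoplus_i\Hom_\C(M_{\rm in}(i),M(i))\xrightarrow{\ \Psi\ }\bigoplus_i\Hom_\C(M_{\rm out}(i),M(i))
\]
with $\ker\Phi=\Hom_Q(M,M)$, together with $\Psi\Phi=0$ and a lower bound on the image of $\Psi$ on a suitable subspace. Your $\Delta$ is exactly the paper's $\Phi$, and your reduction to a positive representation via additivity of $E^{\inj}$ is correct (and a bit more direct than the paper's route, which instead passes to the dual QP using $E(\M^\star)=E(\M)$ before setting up the complex). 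Your reformulation $(\star)$ is also correct.

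The genuine gap is the map $\Delta'$ and the claim that $\Delta'\Delta=0$ follows from $\alpha_i\gamma_i=0$. If $\Delta'(\psi)_i=\psi_i\gamma_i$ then $(\Delta'\Delta\varphi)_i=\varphi_i\alpha_i\gamma_i-\alpha_i\varphi_i^{\rm in}\gamma_i=-\alpha_i\varphi_i^{\rm in}\gamma_i$, which has no reason to vanish, and there is no correction built solely from $\alpha$ and $\gamma$ that cancels it. The paper's $\Psi$ is \emph{not} determined by $\gamma$: its $(b,a)$-component is $\eta_a\mapsto\sum_\nu (u_{b,a}^{(\nu)})_M\,\eta_a\,(v_{a,b}^{(\nu)})_M$ where $\Delta_a(\partial_b S)=\sum_\nu u_{b,a}^{(\nu)}\otimes v_{a,b}^{(\nu)}$ encodes the full potential, and only its leading term (mod $\mathfrak m$) reduces to precomposition with $\gamma$. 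The relation $\Psi\Phi=0$ is then obtained by identifying $\Phi,\Psi$ with $\Hom_{\Pcal(Q,S)}(-,M)$ applied to the maps in the canonical presentation $P_{t(b)}\otimes M(h(b))\to P_{h(a)}\otimes M(t(a))\to P_k\otimes M(k)\to M$ and invoking the exactness of that sequence; this is the content the paper defers to Corollary~\ref{cor:Psi-Phi-0}. Your final dimension count is likewise underspecified: the paper does not bound $\ker\Psi$ by splittings but instead conjugates $\Psi$ by an automorphism $\lambda(s)$ graded by the $\mathfrak m$-adic filtration, lets $s\to 0$ so that $\Psi^{(0)}$ becomes the pure ``precompose with $\gamma$'' map, and uses semicontinuity of rank on the subspace $U_1=\{\psi:\im\psi_i\subseteq\im\alpha_i\}$ to get $\dim\Psi(U_1)\geq\sum_i\rank\gamma_i\cdot\rank\alpha_i$. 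Your sketch does not supply any mechanism to replace either of these two ingredients.
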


\begin{proof}
The desired lower bound for $E(\M)$ follows from another one:
\begin{equation}
\label{eq:E-lower-bound-alpha}
E(\M) \geq \sum_{i \in Q_0}(\dim ({\rm coker}\ \alpha_i) \cdot \dim (\ker\alpha_i/\im \gamma_i)
+  d_i(M) \cdot d_i^-(\M));
\end{equation}
indeed, to deduce \eqref{eq:E-lower-bound-beta-abbreviated} from \eqref{eq:E-lower-bound-alpha}
it suffices to apply the latter bound to the dual QP-representation $\M^\star$ and use
Proposition~\ref{pr:E-invariant-star}.

Substituting into \eqref{eq:E-lower-bound-alpha} the expression \eqref{eq:E-thru-rank-gamma}
for $E(\M)$, regrouping the terms and simplifying, we can rewrite it as follows:
\begin{align}
\label{eq:E-lower-bound-alpha-rewritten}
& \langle M, M \rangle +
 \sum_{a \in Q_1} d_{h(a)}(M)\cdot d_{t(a)}(M)\\
 \nonumber &   -
\sum_{i \in Q_0}(d_i(M)^2 + \dim ({\rm coker}\ \alpha_i) \cdot \dim (\ker\alpha_i))
\geq \sum_{i \in Q_0} {\rm rk} \gamma_i \cdot {\rm rk} \alpha_i.
\end{align}

We abbreviate
$$U = \bigoplus_{i \in Q_0} \Hom_\C (M_{\rm in}(i), M(i)),$$
and define the subspaces $U_1 \subseteq U_2$ in~$U$ by
\begin{align}
\label{eq:U1}
U_1 & = \{(\psi_i: M_{\rm in}(i) \to M(i))_{i \in Q_0} : \
{\rm im} \ \psi_i \subseteq {\rm im} \ \alpha_i \,\,\text{for all}\,\, i\},\\
\nonumber
U_2 & = \{(\psi_i: M_{\rm in}(i) \to M(i))_{i \in Q_0} : \
\psi_i (\ker \alpha_i) \subseteq {\rm im} \ \alpha_i \,\,\text{for all}\,\, i\}.
\end{align}
Now we can state the key lemma.

\begin{lemma}
\label{lem:two-maps}
There exist two linear maps
\begin{equation*}
\xymatrix{
\Hom_R (M, M) \ar[r]^-\Phi & U  \ar[r]^-\Psi
& \bigoplus_{b\in Q_1}
\Hom_\C (M(h(b)), M(t(b)))}
\end{equation*}
satisfying the following conditions:
\begin{enumerate}
\item $\ker \Phi = \Hom_Q (M,M)$;
\item ${\rm \im} \ \Phi \subseteq U_2$;
\item ${\rm \im} \ \Phi \subseteq \ker \Psi$;
\item $\dim \Psi (U_1) \geq \sum_{i \in Q_0} {\rm rk}\ \gamma_i \cdot {\rm rk}\ \alpha_i$.
\end{enumerate}
\end{lemma}

Before proving Lemma~\ref{lem:two-maps}, we show that it implies
\eqref{eq:E-lower-bound-alpha-rewritten}.
By the definition of $U_2$, we have
\begin{align*}
\dim U_2 &= \dim U - \sum_{i \in Q_0} \dim ({\rm coker}\ \alpha_i) \cdot \dim (\ker\alpha_i)\\
& =  \sum_{a \in Q_1} d_{h(a)}(M)\cdot d_{t(a)}(M)   -
\sum_{i \in Q_0} \dim ({\rm coker}\ \alpha_i) \cdot \dim (\ker\alpha_i).
\end{align*}
Note also that
$$\Hom_R (M, M) = \bigoplus_{i\in Q_0} \End_\C (M(i)).$$
By (1), we have
$${\rm rk} \ \Phi = \dim (\Hom_R (M, M))
 - \dim (\ker \Phi) = \sum_{i \in Q_0} d_i(M)^2 - \langle M, M  \rangle.$$
In view of (2), the left hand side of \eqref{eq:E-lower-bound-alpha-rewritten}
is equal to $\dim (U_2/{\rm \im} \ \Phi)$.
Now we use (3) and (4) to conclude that
\begin{align*}
\dim (U_2/{\rm \im} \ \Phi) &\geq \dim (U_2/(U_2 \cap \ker \Psi)) = \dim (\Psi(U_2))\\
&\geq \dim (\Psi(U_1))  \geq \sum_{i \in Q_0} {\rm rk}\ \gamma_i \cdot {\rm rk}\ \alpha_i,
\end{align*}
finishing the proof of \eqref{eq:E-lower-bound-alpha-rewritten}.

\smallskip

To complete the proof of Theorem~\ref{th:E-lower-bound-beta} it
remains to prove Lemma~\ref{lem:two-maps}.
We define the map $\Phi$ by setting, for $\xi \in \Hom_R (M, M)$,
\begin{equation}
\label{eq:Phi}
\Phi(\xi) = (\eta_i: M_{\rm in}(i) \to M(i))_{i \in Q_0} \in
U,\quad \eta_i = \xi \alpha_i - \alpha_i \xi.
\end{equation}
Properties (1) and (2) from  Lemma~\ref{lem:two-maps} are
immediate from this definition.

The definition of~$\Psi$ requires some preparation.
First of all, we identify the space $U$ with
$\bigoplus_{a \in Q_1} \Hom_\C (M(t(a)), M(h(a)))$, so view $\Psi$ as
a linear map
$$\Psi: \bigoplus_{a \in Q_1} \Hom_\C (M(t(a)), M(h(a)))
 \to \bigoplus_{b\in Q_1}
\Hom_\C (M(h(b)), M(t(b)))\ .$$
Now recall from \cite[(3.2)]{dwz} that each arrow $a \in Q_1$
gives rise to a continuous linear map
$$
\Delta_a: R\langle\langle A\rangle\rangle\to
R\langle\langle A\rangle\rangle \ \widehat{\otimes}\ R\langle\langle A\rangle \rangle \ ,
$$
such that for every path $a_{1} \cdots a_{d}$ we have
\begin{equation}
\label{eq:Delta-b-cycle}
\Delta_a (a_{1} \cdots a_{d}) =\sum_{p: a_p=a} a_{1}\cdots a_{p -1} \otimes a_{p+1}\cdots a_{d} \ ;
\end{equation}
here we use the notation
$$R\langle\langle A\rangle\rangle \ \widehat{\otimes}\ R\langle\langle A\rangle\rangle=
\prod_{i,j\geq 0} (A^{\otimes_R i} \otimes A^{\otimes_R j}) \ ,
$$
and the convention that if $a_1 = a$ (resp. $a_d = a$) then the
corresponding term in \eqref{eq:Delta-b-cycle} is
$e_{h(a)} \otimes a_{2}\cdots a_{d}$ (resp. $a_{1}\cdots a_{d-1} \otimes e_{t(a)}$).
In particular, for every $a,b \in Q_1$, we have
$$\Delta_a(\partial_b S) \in R\langle\langle A\rangle\rangle_{t(b), h(a)}
\ \widehat{\otimes}\ R\langle\langle A\rangle \rangle_{t(a), h(b)} \ ;$$
accordingly, we express $\Delta_a(\partial_b S)$ as
\begin{equation}
\label{eq:delta-partial-S}
\Delta_a(\partial_b S) = \sum_\nu u_{b,a}^{(\nu)} \otimes
v_{a,b}^{(\nu)} \quad (u_{b,a}^{(\nu)} \in R\langle\langle A\rangle\rangle_{t(b),h(a)},
\,\, v_{a,b}^{(\nu)} \in R\langle\langle A\rangle \rangle_{t(a),h(b)})\ .
\end{equation}
Now we define the component
$\Psi_{b,a}: \Hom_\C (M(t(a)), M(h(a))) \to \Hom_\C (M(h(b)), M(t(b)))$
of $\Psi$ by setting
\begin{equation}
\label{eq:Psi-ba}
\Psi_{b,a}(\eta_a) = \sum_\nu (u_{b,a}^{(\nu)})_M \circ \eta_a \circ  (v_{a,b}^{(\nu)})_M \ .
\end{equation}

We postpone the proof of  property (3) in
Lemma~\ref{lem:two-maps}, that is, of the equality $\Psi \circ \Phi = 0$, until
Section~\ref{sec:homological}, see Corollary~\ref{cor:Psi-Phi-0} and Remark~\ref{rem:stilltrue} below.

It remains to check property (4).
We start with the following observation which is a direct consequence of the definitions:
for every pair of arrows $a$ and $b$, we have
\begin{equation}
\label{eq:Delta-b-partial-a-constant-term}
\Delta_a \partial_b S \equiv   e_{t(b)} \otimes \delta_{h(a), t(b)} \partial_{ba} S
\,\, {\rm mod} \,\, {\mathfrak m} \widehat{\otimes} R\langle\langle A\rangle\rangle  \ .
\end{equation}
%This can be shown in the same way as in the proof of (3) above, by
%first checking the statement for a cyclic path instead of a
%general potential~$S$.
In view of \eqref{eq:gamma-ab} and \eqref{eq:Psi-ba}, it follows
%from \eqref{eq:Delta-b-partial-a-constant-term}
that, for every $\eta_a \in \Hom_\C (M(t(a)), M(h(a)))$, the morphism
$\Psi_{b,a}(\eta_a) - \delta_{h(a), t(b)} \eta_a \circ \gamma_{a,b}
\in \Hom_\C (M(h(b)), M(t(b)))$ is a linear combination of the
morphisms of the form $u_M \circ \eta_a \circ v_M$ with
$u \in \mathfrak m, \, v \in R\langle\langle A\rangle\rangle$.

Since ${\mathfrak m}$ acts nilpotently
on $M$, we have a descending filtration of $R$-modules
$$
M\supset {\mathfrak m}M\supset \cdots \supset {\mathfrak
m}^{\ell}M=0\ .
$$
For $p = 0, \dots, \ell - 1$, choose a $R$-submodule
$M^{(p)}$ in $M$ such that
$${\mathfrak m}^p M = M^{(p)} \oplus {\mathfrak m}^{p+1} M \ .$$
For $s\in \C^\star$, define $\lambda(s) \in \End_R (M)$ as the
$R$-module automorphism of $M$ acting on each $M^{(p)}$ as
multiplication by $s^p$.
%This defines an action of the multiplicative group $\C^\star$ on $M_k$ for all $k$.
This definition makes it clear that
\begin{equation}
\label{eq:limit}
\lim_{s\to 0} \lambda(s) \circ u_M \circ \lambda(s)^{-1} = 0
\end{equation}
for each $u \in {\mathfrak m}$.

Now for each $s\in \C^\star$, define the linear map
$$
\Psi^{(s)} =(\Psi_{b,a}^{(s)}):
U = \bigoplus_a \Hom_\C (M(t(a)),M(h(a)))\to \bigoplus_b\Hom_\C (M(h(b)),M(t(b)))
$$
by setting
$$
\Psi^{(s)}_{b,a}(\eta_a)=\lambda(s) \circ \Psi_{b,a}(\lambda(s)^{-1} \circ \eta_a).
$$
Since $\Psi^{(s)}$ is obtained from $\Psi$ by composing it
with invertible linear maps on both sides,
we have $\rank \Psi^{(s)}=\rank \Psi$ for all $s\in \C^\star$, and
more generally, $\dim \Psi^{(s)} (U') = \dim \Psi (U')$ for any
subspace $U' \subseteq U$ invariant under the automorphism
$(\eta_a) \mapsto (\lambda(s)^{-1} \circ \eta_a)$.
Note that the subspace $U_1 \subseteq U$ given by \eqref{eq:U1}
satisfies this condition; indeed, under the identification of $U$
with $\bigoplus_a \Hom_\C (M(t(a)),M(h(a)))$, $U_1$
identifies with $\bigoplus_a \Hom_\C (M(t(a)), {\mathfrak m} M(h(a)))$.

Now consider the linear map
$\Psi^{(0)} = \lim_{s\to 0}\Psi^{(s)}$.
Since under the continuous deformation
the rank of a linear map depends semi-continuously on the
deformation parameter, we conclude that
$$\dim \Psi (U_1) \geq \dim \Psi^{(0)} (U_1);$$
to finish the proof of property (4) in Lemma~\ref{lem:two-maps},
it suffices to show that
\begin{equation}
\label{eq:dim-Psi-0-U1}
\dim \Psi^{(0)} (U_1) =  \sum_i {\rm rk} \gamma_i \cdot {\rm rk} \alpha_i \ .
\end{equation}
In view of \eqref{eq:limit}, each component $\Psi^{(0)}_{b,a}$ of $\Psi^{(0)}$ acts by
$$\Psi_{b,a}^{(0)}(\eta_a) = \delta_{h(a), t(b)} \eta_a \circ \gamma_{a,b}.$$
Using natural identifications
$$\bigoplus_a \Hom_\C (M(t(a)),M(h(a))) =
\bigoplus_i \Hom_\C (M_{\rm in}(i),M(i))\ ,$$
$$\bigoplus_b \Hom_\C (M(h(b)),M(t(b))) =
\bigoplus_i \Hom_\C (M_{\rm out}(i),M(i)) \ ,$$
the operator $\Psi^{(0)}$ translates into the direct sum of
operators
$$\Psi^{(0)}_i: \Hom_\C (M_{\rm in}(i),M(i)) \to \Hom_\C (M_{\rm out}(i),M(i))$$
acting by
$$\Psi^{(0)}_i (\eta_i) = \eta_i \circ \gamma_i \ .$$
This description makes \eqref{eq:dim-Psi-0-U1} clear, finishing
the proofs of Lemma~\ref{lem:two-maps} and
Theorem~\ref{th:E-lower-bound-beta}.
\end{proof}

The following corollary is immediate from
\eqref{eq:E-lower-bound-beta-abbreviated}.

\begin{corollary}
\label{cor:dkplus-dkminus}
Suppose ${\mathcal M}=(M,V)$ is a QP-representation such that $E({\mathcal M})=0$.
Then for every vertex $k$ we have:
\begin{enumerate}
%\item $\Ext^1_{P(Q,S)}(M,M)=0$;
\item Either $M(k) = \{0\}$ or $V(k) = \{0\}$;
\item Either $\ker \beta_k = \{0\}$ or $\im \beta_k = \ker\gamma_k$.
%\item for every $k$, $h_k^\star=0$ or $\overline{h}_k^\star=\overline{h}_k^\star-\overline{g}_k^\star=0$.
\end{enumerate}
\end{corollary}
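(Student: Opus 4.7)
The plan is to read off the conclusion directly from the lower bound established in Theorem~\ref{th:E-lower-bound-beta}. Recall that this bound gives
\[
E(\M) \geq \sum_{i \in Q_0}\bigl(\dim(\ker \beta_i)\cdot \dim(\ker\gamma_i/\im \beta_i) + d_i(M)\cdot d_i^-(\M)\bigr).
\]
Every summand on the right-hand side is a product of two non-negative integers, hence is itself non-negative. Here the key point is that the expression $\dim(\ker\gamma_i/\im \beta_i)$ actually makes sense as a non-negative integer, since $\im \beta_i \subseteq \ker \gamma_i$ by the QP-relation $\gamma_i\circ\beta_i=0$ (see \eqref{eq:triangle-relations}).

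Now I would assume $E(\M)=0$. Since the right-hand side is a sum of non-negative terms bounded above by $0$, each individual summand must vanish. Fixing a vertex $k$, the vanishing of the summand indexed by $k$ gives two separate equalities:
\[
\dim(\ker \beta_k)\cdot \dim(\ker\gamma_k/\im \beta_k) = 0, \qquad d_k(M)\cdot d_k^-(\M) = 0.
\]
The second equality yields assertion (1): either $\dim M(k)=0$ (so $M(k)=\{0\}$) or $\dim V(k)=0$ (so $V(k)=\{0\}$). The first equality yields assertion (2): either $\ker\beta_k=\{0\}$ or $\dim(\ker\gamma_k/\im\beta_k)=0$; in the latter case, combined with the inclusion $\im\beta_k\subseteq\ker\gamma_k$ noted above, we conclude $\im\beta_k=\ker\gamma_k$.

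There is no real obstacle here: the corollary is a purely formal consequence of the fact that Theorem~\ref{th:E-lower-bound-beta} expresses $E(\M)$ as being bounded below by a sum of products of dimensions. All the actual work was done in proving that lower bound; the corollary itself is just a case analysis of when each product can vanish.
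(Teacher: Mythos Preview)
Your proof is correct and is exactly the argument the paper intends: the paper simply states that the corollary is ``immediate from \eqref{eq:E-lower-bound-beta-abbreviated}'', and you have spelled out precisely why. The only detail you added beyond what the paper makes explicit is the justification that $\im\beta_i\subseteq\ker\gamma_i$ via \eqref{eq:triangle-relations}, which is indeed the reason the quotient dimension is nonnegative.
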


Since $E({\mathcal M})$ is invariant under mutations,
Corollary~\ref{cor:dkplus-dkminus} implies that if $E({\mathcal
M})=0$ then every QP-representation obtained from $\M$ by a
sequence of mutations satisfies the properties (1) and (2).
The following example shows that the converse is not true.

Let $Q$ be the Kronecker quiver
$$
\xymatrix{
1 \ar@<.5ex>[r]^{a, b}\ar@<-,5ex>[r] & 2}.
$$
For every positive integer~$n$, let $\M_n = (M_n, \{0\})$ be the indecomposable positive
QP-representation of $(Q,0)$ such that $M_n(1) = M_n(2) = \C^n$,
and the linear maps $a_{M_n}$ and $b_{M_n}$ from $M_n(1)$ to
$M_n(2)$ are as follows: $a_{M_n} = I$ is the identity map, while
$b_{M_n} = J$ is the nilpotent Jordan $n$-block.
Recalling \eqref{eq:g-M}, we see that the $\g$-vector of $\M_n$
is equal to $(n,-n)$.
Also $\Hom_Q(M_n, M_n)$ is naturally isomorphic to the centralizer
of $J$ in ${\rm End}(\C^n)$, hence we have $\langle M_n, M_n \rangle = n$.
Recalling \eqref{eq:E-abbreviated}, we get
$$E(\M_n) = \langle M_n, M_n \rangle + d_1(M_n) g_1(\M_n) +
d_2(M_n) g_2(\M_n) = n + n^2 - n^2 = n \ .$$
On the other hand, it is easy to see that $\M_n$ as well as all
representations obtained from it by mutations, satisfy properties
(1) and (2) in Corollary~\ref{cor:dkplus-dkminus} (in fact, every
representation obtained from $\M_n$ by mutations is either
right-equivalent to $\M_n$, or differs from it just by
interchanging vertices $1$ and $2$).

\section{Applications to cluster algebras}
\label{sec:proof-cluster-conjectures}

\begin{proof}[Proof of Theorem~\ref{th:fg-conjectures}]
We fix $t_0, t \in \TT_n$, a skew-symmetric integer $n \times n$
matrix~$B$, and a nondegenerate potential~$S$ on the quiver $Q = Q(B)$.
Recall that in Theorem~\ref{th:g-F-rep} the $\g$-vector
$\g_{\ell;t}^{B;t_0}$ and the $F$-polynomial $F_{\ell;t}^{B;t_0}$
from the theory of cluster algebras are interpreted as the
$\g$-vector $\g_\M$ and the $F$-polynomial $F_\M$ associated with
the QP-representation $\M = \M_{\ell;t}^{B;t_0}$ of $(Q,S)$ given
by \eqref{eq:M-cluster-variable}.

Conjectures~\ref{con:F-CT-1} and \ref{con:F-HT-1} are immediate
from this interpretation, see Proposition~\ref{pr:F-ct-ht}.

Our next target is Conjecture~\ref{con:g-transition}.
Comparing the desired formula
\eqref{eq:Langlands-dual-trop} with \eqref{eq:g-transition},
we see that it is enough to prove the equality
\begin{equation}
\label{eq:hk-gk}
\min(0, g_k) = h_k \ ,
\end{equation}
where $g_k$ and $h_k$ are given by \eqref{eq:gi-abbreviated}
and \eqref{eq:h-ker-beta}, respectively.
Substituting these expressions into \eqref{eq:hk-gk} and adding
$\dim \ker \beta_k$ to both sides, we arrive at the equality
\begin{equation}
\label{eq:hk-gk-rep}
\min(\dim (\ker \beta_k), \dim (\ker \gamma_k/ {\rm im} \ \beta_k) + d_k^-(\M)) = 0 \ .
\end{equation}
To finish the proof, it remains to observe that, in view of
Corollary~\ref{cor:EMzero}, the QP-representation $\M$ satisfies
properties (1) and (2) in Corollary~\ref{cor:dkplus-dkminus}, and
that \eqref{eq:hk-gk-rep} clearly holds for any representation
with these properties.

Now we are ready to prove Conjecture~\ref{con:signs-gi}.
The key observation is that the above argument proves the equality
\eqref{eq:hk-gk} not only for each representation
$\M_{\ell;t}^{B;t_0}$ but also for the direct sum
$$\M = \M_{1;t}^{B;t_0} \oplus \cdots \oplus \M_{n;t}^{B;t_0}$$
(since $\M$ satisfies the assumption in
Corollary~\ref{cor:EMzero}).
In view of Lemma~\ref{lem:g-F-mutation}, we have
$g_k = h_k - h'_k$, where $h'_k = - \dim \ker \overline \beta_k$ is the $k$-th component of the
vector $\h_{\overline \M}$ for the QP-representation $\overline \M = \mu_k(\M)$.
Thus, \eqref{eq:hk-gk} is equivalent to
\begin{equation}
\label{eq:hk-h-prime-k}
\max(h_k, h'_k) = 0 \ .
\end{equation}
Suppose that $h_{k} = 0$, that is, $\ker \beta_k = 0$.
Then the same property holds for each direct summand
$\M_\ell = \M_{\ell;t}^{B;t_0}$, implying that
$$g_k (\M_\ell) = - h_k (\overline {\M_\ell}) \geq 0$$
for all $\ell = 1, \dots, n$.
This shows that the $k$-th coordinates of the vectors
$\g_{1;t}^{B;t_0},\dots,\g_{n;t}^{B;t_0}$ are nonnegative.
If $h'_{k}=0$, then the same argument shows that
the $k$-th coordinates of all these vectors are nonpositive,
finishing the proof of Conjecture~\ref{con:signs-gi}.

As for Conjecture~\ref{con:g-vector-Z-basis}, it is an easy
consequence of already proven Conjectures~\ref{con:g-transition}
and \ref{con:signs-gi} combined with the following observation
made already in \cite[Remark~7.14]{ca4}:
\begin{equation}
\label{eq:sign-coherent-tropical-Langlands-imply-basis}
\vcenter{\hsize=5in\noindent
if $\g_1, \dots, \g_n$ are sign-coherent
vectors forming a $\Z$-basis in~$\Z^n$,
then the transformation
\eqref{eq:Langlands-dual-trop} sends them to  a $\Z$-basis in
$\Z^n$.}
\end{equation}
Indeed, to show that the vectors $\g_{1;t}^{B;t_0}, \dots, \g_{n;t}^{B;t_0}$
form a $\Z$-basis of~$\Z^n$, proceed by induction on the distance
between $t_0$ and $t$ in $\TT_n$.
The basic step $t=t_0$ is clear from (\ref{eq:gg-initial}), and
the inductive step follows from
\eqref{eq:sign-coherent-tropical-Langlands-imply-basis}.

To finish the proof of Theorem~\ref{th:fg-conjectures}, it remains
to prove Conjecture~\ref{con:g-vectors-separate-cluster-monomials}.
%We start with a useful characterization of negative QP-representations.

\begin{lemma}
\label{lem:negative-reps-characterization}
For a QP-representation $\M = (M,V)$, the following conditions are
equivalent:
\begin{enumerate}
\item $\M$ is negative, i.e., $M = \{0\}$.
\item $E(\M) = 0$, and the $\g$-vector $\g_\M = (g_1, \dots, g_n)$ is
nonnegative.
\end{enumerate}
Under these conditions, we have $\dim V(i) = g_i$ for all~$i$, so
$\M$ is uniquely determined by its $\g$-vector.
\end{lemma}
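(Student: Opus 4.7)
The plan is to prove the two implications separately; both turn out to be essentially formal consequences of the definition of $E(\M)$, so there is no real obstacle here.

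For the direction $(1) \Rightarrow (2)$, I would simply unpack the definitions: if $M = \{0\}$, then $\Hom_Q(M,M) = 0$ and every $\dim M(k) = 0$, so by \eqref{eq:Einvariant} we immediately get $E(\M) = 0$. Moreover, from \eqref{eq:g-M} we have $g_k = \dim \ker \gamma_k - \dim M(k) + \dim V(k) = \dim V(k) \geq 0$, which simultaneously verifies the nonnegativity of $\g_\M$ and the final formula $\dim V(i) = g_i$.

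For the direction $(2) \Rightarrow (1)$, the key observation is that all summands on the right-hand side of
\[
0 = E(\M) = \dim \Hom_Q(M,M) + \sum_{k=1}^n g_k \dim M(k)
\]
are \emph{nonnegative} integers under the hypotheses of $(2)$, and hence each must vanish individually. In particular $\dim \Hom_Q(M,M) = 0$. But if $M$ were nonzero the identity endomorphism $\id_M$ would be a nonzero element of $\Hom_Q(M,M)$, forcing $\dim \Hom_Q(M,M) \geq 1$, a contradiction. Therefore $M = \{0\}$, and then $g_i = \dim V(i)$ as in the first direction, so the tuple $(\dim V(i))$, and hence $\M$ up to isomorphism, is recovered from $\g_\M$.

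The reason this argument is so short is that we do not need to invoke Theorem~\ref{th:E-lower-bound-beta}: the mere positivity of $\id_M$ in the endomorphism space is enough to trivialize things once the $\g$-vector is assumed nonnegative. The sharper lower bound from Section~\ref{sec:inequalities} is what would be needed if one dropped the nonnegativity hypothesis on $\g_\M$, but here it is unnecessary.
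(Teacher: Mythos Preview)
Your proof is correct, and the direction $(2)\Rightarrow(1)$ is genuinely simpler than the paper's argument. The paper does not exploit the observation that $\dim\Hom_Q(M,M)\geq 1$ whenever $M\neq 0$; instead it routes through the equality $\min(0,g_k)=h_k$, which was derived earlier from Corollary~\ref{cor:dkplus-dkminus} (hence ultimately from Theorem~\ref{th:E-lower-bound-beta}). Concretely, the paper argues: $E(\M)=0$ forces $h_k=\min(0,g_k)$ for every $k$, so $g_k\geq 0$ gives $h_k=0$, i.e.\ $\ker\beta_k=0$; then one notes that a nonzero nilpotent representation must have $\ker\beta_k\neq 0$ for some $k$. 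Your argument bypasses all of this by using the identity endomorphism directly, which is both shorter and more self-contained. The paper's route has the advantage of yielding the auxiliary information $\ker\beta_k=0$ along the way (and it reuses machinery already in place for the proof of Conjecture~\ref{con:g-transition}), but for the bare statement of the lemma your approach is cleaner.
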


\begin{proof}
The only non-trivial statement is the implication $(2)
\Longrightarrow (1)$.
We have already established that the equality $E(\M) = 0$ implies
\eqref{eq:hk-gk}, so if $\g_\M$ is nonnegative then $h_k = 0$ for all~$k$.
Thus we have $\ker \beta_k = 0$ for all~$k$.
It remains to observe that the latter condition cannot hold for a
nonzero nilpotent quiver representation~$M$.
Indeed, if ${\mathfrak m}^{\ell-1}M \neq 0$, and ${\mathfrak m}^{\ell}M = 0$
for some $\ell \geq 1$, then
$0 \neq {\mathfrak m}^{\ell-1}M \subseteq \oplus_k \ker \beta_k$,
finishing the proof.
\end{proof}

\begin{lemma}
\label{lem:M-M-prime}
Let $\M$ and $\M'$ be QP-representations of the same nondegenerate
QP, and suppose that $\M'$ is mutation-equivalent to a negative
representation.
The following conditions are equivalent:
\begin{enumerate}
\item $\M$ is right-equivalent to $\M'$.
\item $E(\M) = 0$, and $\g_\M = \g_{\M'}$.
\end{enumerate}
\end{lemma}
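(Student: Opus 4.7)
The plan is to prove the nontrivial direction (2) $\Rightarrow$ (1) by transporting the hypotheses along a mutation sequence that reduces $\M'$ to a negative QP-representation, and then applying Lemma~\ref{lem:negative-reps-characterization}. The implication (1) $\Rightarrow$ (2) is immediate: right-equivalence preserves both $E(\M)$ and $\g_\M$, and since $\M'$ is obtained by a sequence of mutations from a negative representation, Corollary~\ref{cor:EMzero} gives $E(\M') = 0$.

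For (2) $\Rightarrow$ (1), fix a sequence of mutations $\mu_{k_p} \cdots \mu_{k_1}$ taking $\M'$ to a negative QP-representation $\N = (\{0\},V)$. I would like to argue that applying the same sequence to $\M$ yields a QP-representation right-equivalent to $\N$. The first ingredient is that at each intermediate stage both the transformed $\M$ and the transformed $\M'$ still have vanishing $E$-invariant; this follows from Theorem~\ref{th:E-invariant} together with Corollary~\ref{cor:EMzero} applied to $\M'$. The second, more delicate ingredient is that the $\g$-vectors remain equal along the sequence. By Lemma~\ref{lem:g-F-mutation}, the transformation rule \eqref{eq:g-transition} depends on the integer $h_k$, which a priori could differ for $\M$ and for $\M'$. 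However, the identity \eqref{eq:hk-gk}, namely $h_k = \min(0, g_k)$, established in the preceding proof of Conjecture~\ref{con:g-transition} for any QP-representation with $E = 0$, shows that $h_k$ is actually determined by $g_k$ in this situation. Hence equal $\g$-vectors before the mutation at $k$ force equal values of $h_k$, and then \eqref{eq:g-transition} produces equal $\g$-vectors afterwards. Iterating this observation along the whole sequence, the resulting QP-representation $\mu_{k_p} \cdots \mu_{k_1}(\M)$ satisfies $E = 0$ and has the same $\g$-vector as $\N$.

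Now the $\g$-vector of the negative representation $\N$ has components $g_k = \dim V(k) \geq 0$, so the $\g$-vector of $\mu_{k_p} \cdots \mu_{k_1}(\M)$ is also nonnegative. Lemma~\ref{lem:negative-reps-characterization} then forces $\mu_{k_p} \cdots \mu_{k_1}(\M)$ to be negative, and since a negative QP-representation is determined by its $\g$-vector, we obtain
\[
\mu_{k_p} \cdots \mu_{k_1}(\M) \;\cong\; \N \;\cong\; \mu_{k_p} \cdots \mu_{k_1}(\M')
\]
up to right-equivalence. Applying the inverse sequence of mutations and invoking the involutivity property \eqref{eq:muk-rep-involution} yields the desired right-equivalence between $\M$ and $\M'$.

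The main obstacle is the middle step: making sure that $\g$-vectors actually track each other under the mutation sequence, since the formula \eqref{eq:g-transition} is sensitive to the auxiliary quantity $h_k$ of each individual representation. The resolution is the sharp equality $h_k = \min(0, g_k)$ available in the $E=0$ case, which rigidifies the mutation rule enough to propagate the equality $\g_\M = \g_{\M'}$ through every step. Everything else is bookkeeping built on Theorem~\ref{th:E-invariant}, Lemma~\ref{lem:g-F-mutation}, Lemma~\ref{lem:negative-reps-characterization}, and the involutivity of QP-mutations.
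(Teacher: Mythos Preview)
Your proof is correct and follows essentially the same approach as the paper's: reduce to the case where $\M'$ is negative by pushing both representations along the mutation sequence, using the fact that for $E=0$ representations the identity $h_k=\min(0,g_k)$ forces the $\g$-vector mutation rule \eqref{eq:g-transition} to become the intrinsic formula \eqref{eq:Langlands-dual-trop}, and then invoke Lemma~\ref{lem:negative-reps-characterization}. The paper's version is terser, simply citing ``the already established formula \eqref{eq:Langlands-dual-trop}'' for the propagation step you spell out in detail.
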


\begin{proof}
Again only the implication $(2) \Longrightarrow (1)$ needs a proof.
Since $E(\M) = E(\M')= 0$, the already established formula \eqref{eq:Langlands-dual-trop}
shows that the $\g$-vectors remain the same under applying to $\M$
and $\M'$ the same sequence of mutations.
Since mutations also preserve right-equivalence, in proving that $(2) \Longrightarrow (1)$
we may assume that $\M'$ is negative, in which case the statement
follows from Lemma~\ref{lem:negative-reps-characterization}.
\end{proof}

Under the assumptions of
Conjecture~\ref{con:g-vectors-separate-cluster-monomials},
consider the QP-representations
$$
{\mathcal M}=\bigoplus_{i\in I} ({\mathcal M}_{i;t}^{B;t_0})^{a_i},
\quad {\mathcal M}'=\bigoplus_{i\in I'} ({\mathcal M}_{i;t'}^{B;t_0})^{a'_i} \ .
$$
In view of \eqref{eq:g-sum}, we have
$$
\g_{\mathcal M}=\sum_{i\in I} a_i\g^{B;t_0}_{i;t}=
\sum_{i\in I'}a_i \g^{B;t_0}_{i;t'} = \g_{{\mathcal M}'} \ .
$$
Also we have $E(\M) = E(\M')= 0$ since both $\M$ and $\M'$ are
mutation-equivalent to negative QP-representations.
By Lemma~\ref{lem:M-M-prime}, $\M$ is right-equivalent to $\M'$.
Because of the uniqueness of the decomposition into indecomposables,
there exists a bijection $\sigma:I\to I'$
such that ${\mathcal M}_{i;t}^{B;t_0}$ is right-equivalent to
${\mathcal M}_{\sigma(i);t'}^{B;t_0}$, and $a_i = a'_{\sigma(i)}$
for $i \in I$.
Thus we have
$$
\g^{B;t_0}_{i;t}=\g^{B;t_0}_{\sigma(i);t'}, \quad
F^{B;t_0}_{i;t}=F^{B;t_0}_{\sigma(i);t'}
$$
for all $i \in I$, finishing the proofs of
Conjecture~\ref{con:g-vectors-separate-cluster-monomials} and
of Theorem~\ref{th:fg-conjectures}.
\end{proof}

\section{Homological interpretation of the $E$-invariant}
\label{sec:homological}

Throughout this section we fix a quiver $Q$ without oriented $2$-cycles,
and a QP $(Q,S)$.
Let $\M =(M, V)$ and $\N = (N, W)$ be two QP-representations of $(Q,S)$.
Our aim is to associate to $\M$ and $\N$ a vector space $\E^\inj(\M,\N)$
such that $\dim \E^\inj(\M,\N) = E^\inj(\M,\N)$, the integer function
defined in \eqref{eq:E-abbreviated}.
It will be more convenient for us to work with the ``twisted"
function $E^\proj(\M,\N) = E^\inj(\N^\star, \M^\star)$, where
$\M^\star$ and $\N^\star$ are QP-representations of the opposite
QP $(Q^{\rm op}, S^{\rm op})$ constructed before
Proposition~\ref{pr:E-invariant-star}.
Clearly, we have $\langle N^\star, M^\star \rangle = \langle M, N \rangle$,
implying
\begin{equation}
\label{eq:E-star}
E^\proj(\M, \N) = \langle M, N \rangle + \sum_{k \in Q_0} g_k(\M^\star) d_k(N).
\end{equation}

We turn to the construction of a vector space $\E^\proj(\M,\N)$ such that
\begin{equation}
\label{eq:dim-E-star}
\dim \E^\proj(\M,\N) = E^\proj(\M,\N).
\end{equation}
Let $\Pcal(Q,S)$ be the Jacobian  algebra of $(Q,S)$ (see
Section~\ref{sec:F-poly}).
In the rest of the section we assume that
\begin{equation}
\label{eq:jacobian-alg-fin-dimensional}
\vcenter{\hsize=5in\noindent
the potential $S$ belongs to the path algebra $R \langle A \rangle$, and the
two-sided ideal
$J_0$ in $R \langle A \rangle$ generated by all cyclic derivatives
$\partial_a S$ contains some power $\mathfrak{m}^N$.}
\end{equation}
(Recall that in our general setup, $S$ belongs to the completed
path algebra $R \langle \langle A \rangle \rangle$, and the
Jacobian ideal $J$ of $S$ is the closure of $J_0 $ in
$R \langle \langle A \rangle \rangle$.)
Under this assumption, the Jacobian algebra
$\Pcal(Q,S) = R \langle \langle A \rangle \rangle/J$ is identified
with $R \langle A \rangle/J_0$, and it is finite-dimensional.
In this situation, all the $\Pcal(Q,S)$-modules considered below
will be finite-dimensional as well.

For every vertex $k \in Q_0$ let $P_k$ denote the indecomposable projective
$\Pcal(Q,S)$-module corresponding to~$k$.
Recall that $P_k$ is given by
\begin{equation}
\label{eq:projectives}
P_k = \bigoplus_{i \in Q_0} \Pcal(Q,S)_{i,k},
\end{equation}
where the double $Q_0$-grading on $\Pcal(Q,S)$ comes from the
$R$-bimodule structure (see Section~\ref{sec:QP-background}).
In particular, each $P_k$ is finite-dimensional in view of
\eqref{eq:jacobian-alg-fin-dimensional}.

%It is well-known (see e.g., \cite{bk}) that

To every (finite-dimensional) $\Pcal(Q,S)$-module
$M$, we associate the sequence of $\Pcal(Q,S)$-module homomorphisms
%has the \emph{canonical} presentation
\begin{equation}
\label{eq:canpres}
\bigoplus_{b \in Q_1} (P_{t(b)}\otimes M(h(b))) \buildrel{\psi}\over\rightarrow
\bigoplus_{a\in Q_1} (P_{h(a)}\otimes M(t(a))) \buildrel{\varphi}\over\rightarrow
\bigoplus_{k\in Q_0} (P_k\otimes M(k)) \buildrel{\rm ev}\over\rightarrow M\rightarrow 0
\end{equation}
defined as follows.
The $\Pcal(Q,S)$-module homomorphisms ${\rm ev}$ and $\varphi$ are given by
%the formulas
\begin{equation}
\label{eq:evaluation-morphism}
{\rm ev}(p\otimes m)=pm \quad (p\in P_k,\, m\in M(k)),
\end{equation}
and
\begin{equation}
\label{eq:varphi-first-form}
\varphi(p\otimes m)= pa\otimes m - p\otimes a_M (m) \quad (p\in P_{h(a)},\, m\in M(t(a))),
\end{equation}
while the component
$\psi_{a,b}: P_{t(b)}\otimes M(h(b)) \to P_{h(a)}\otimes M(t(a))$
of $\psi$ is given by (in the notation of \eqref{eq:delta-partial-S} and \eqref{eq:Psi-ba})
\begin{equation}
\label{eq:psi-ab}
\psi_{a,b}(p \otimes m) = \sum_\nu pu_{b,a}^{(\nu)} \otimes (v_{a,b}^{(\nu)})_M \cdot m
\ .
\end{equation}

\begin{proposition}
\label{pr:extended-canonical-presentation}
The sequence \eqref{eq:canpres} is exact.
\end{proposition}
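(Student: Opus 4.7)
The plan is to verify exactness of the complex \eqref{eq:canpres} at each of its three non-trivial positions, starting from the right. The rightmost two positions admit elementary arguments; exactness at the leftmost is the crux.

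Surjectivity of ${\rm ev}$ is immediate, since any $m \in M(k)$ equals ${\rm ev}(e_k \otimes m)$. For exactness at $\bigoplus_k P_k \otimes M(k)$, the inclusion ${\rm im}(\varphi) \subseteq \ker({\rm ev})$ follows from ${\rm ev}(\varphi(p \otimes m)) = (pa)m - p \cdot a_M(m) = 0$. For the converse, I would build a splitting $\xi : M \to \bigoplus_k (P_k \otimes M(k))/{\rm im}(\varphi)$ by $\xi(m) = \overline{e_k \otimes m}$ for $m \in M(k)$. Since $\varphi(e_{h(a)} \otimes m) = a \otimes m - e_{h(a)} \otimes a_M(m)$, the map $\xi$ intertwines the action of every arrow, hence of every element of $\Pcal$; then ${\rm ev} \circ \xi = {\rm id}_M$ and, by induction on path length, $\xi \circ {\rm ev}$ is the identity on the quotient.

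The complex condition $\varphi \circ \psi = 0$ will be verified by expanding with \eqref{eq:varphi-first-form} and \eqref{eq:psi-ab} and applying the Leibniz rule for $\Delta_a$ to the decomposition $\Delta_a(\partial_b S) = \sum_\nu u_{b,a}^{(\nu)} \otimes v_{a,b}^{(\nu)}$; a telescoping summation over $a \in Q_1$ yields the identity
$$\sum_{a,\nu} u_{b,a}^{(\nu)} a \otimes v_{a,b}^{(\nu)} - \sum_{a,\nu} u_{b,a}^{(\nu)} \otimes a v_{a,b}^{(\nu)} = \partial_b S \otimes 1 - 1 \otimes \partial_b S$$
in the free path algebra. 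Consequently, for $p \in P_{t(b)}$ and $m \in M(h(b))$,
$$\varphi(\psi(p \otimes m)) = (p \cdot \partial_b S) \otimes m - p \otimes (\partial_b S \cdot m),$$
and both summands vanish because $\partial_b S$ lies in the Jacobian ideal, so it acts as zero both in $\Pcal$ and on $M$.

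The main obstacle is the remaining inclusion $\ker(\varphi) \subseteq {\rm im}(\psi)$. I would handle it by interpreting \eqref{eq:canpres} as the result of applying $- \otimes_{\Pcal} M$ on the right to the $\Pcal$-bimodule complex
$$\bigoplus_b \Pcal e_{t(b)} \otimes_{\C} e_{h(b)} \Pcal \xrightarrow{\Psi} \bigoplus_a \Pcal e_{h(a)} \otimes_{\C} e_{t(a)} \Pcal \xrightarrow{\Phi} \Pcal \otimes_R \Pcal \xrightarrow{\mu} \Pcal \to 0,$$
whose maps $\Psi, \Phi, \mu$ are the evident bimodule analogs of $\psi, \varphi, {\rm ev}$. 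Each term of this bimodule complex is free as a right $\Pcal$-module (a direct sum of copies of the indecomposable right projectives $e_k \Pcal$), so once exactness of the bimodule complex is known, exactness of the tensored complex \eqref{eq:canpres} is automatic: $\Pcal$ is projective over itself as a right module, so ${\rm Tor}^{\Pcal}_*(\Pcal, M) = 0$ for $* \geq 1$. The hard part is thus exactness of the bimodule complex at its middle term, which expresses the fact that the relations in $\Pcal$ are, in a precise bimodule sense, generated exactly by the cyclic derivatives $\{\partial_a S : a \in Q_1\}$. I will establish this by combining the standard bar resolution of the free path algebra $R\langle A\rangle$ (exact because $R\langle A\rangle$ is hereditary) with the short exact sequence $0 \to J_0 \to R\langle A\rangle \to \Pcal \to 0$, tracking how the ideal $J_0$ contributes the additional leftmost term $\bigoplus_b \Pcal e_{t(b)} \otimes_{\C} e_{h(b)} \Pcal$ upon passage to the quotient.
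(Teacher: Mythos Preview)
Your strategy coincides with the paper's: reduce \eqref{eq:canpres} to a $\Pcal$-bimodule complex ending in $\Pcal$, then apply $-\otimes_\Pcal M$. The paper imports the bimodule resolution wholesale from Butler--King \cite{bk} and notes that it splits as a sequence of right $\Pcal$-modules, so tensoring with $M$ preserves exactness. You instead verify the three easier positions directly (cleanly; the telescoping check of $\varphi\psi=0$ is correct and pleasant) and invoke the bimodule reduction only for $\ker\varphi\subseteq\im\psi$.

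Two points to sharpen. First, ``$\operatorname{Tor}^{\Pcal}_*(\Pcal,M)=0$ for $*\geq 1$'' is not the right justification for why tensoring preserves exactness: your bimodule complex has only three terms to the left of $\Pcal$ and is not a priori a full projective resolution on the right, so you cannot read off vanishing of $H_1$ from a Tor computation. The correct argument---stated in the paper---is that the complex \emph{splits} as a sequence of right $\Pcal$-modules (each successive kernel, starting from $\ker(\Pcal\otimes_R\Pcal\to\Pcal)$, is a summand of a free right module, hence projective), and split exact sequences survive any additive functor. Second, your last sentence is where the real content lies: identifying the kernel of $\Pcal\otimes_R A\otimes_R\Pcal\to\Pcal\otimes_R\Pcal$ with the bimodule image of $\Psi$ amounts to showing that the surjection $A^\star\to J_0/(J_0\mathfrak m+\mathfrak m J_0)$, $b\mapsto\partial_b S$, yields the claimed exactness after tensoring up. This is precisely what \cite{bk} establishes; your outline (short bimodule resolution of $R\langle A\rangle$ plus the ideal sequence) is the right roadmap, but ``tracking how $J_0$ contributes'' is not yet a proof. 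Either carry it out or cite \cite{bk} as the paper does.
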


\begin{proof}
As pointed out by the referee,
this proposition follows from the results of \cite{bk}.
For the convenience of the reader we present some details (also kindly provided by the referee).
To make our notation closer to that of \cite{bk}, in the following argument
we denote the Jacobian algebra $R\langle A\rangle/J_0$ by $\Lambda$, and rename $J_0$ into $I$.

The ring $\Lambda$ is a bimodule over itself.
If we splice the exact sequence \cite[(1.4)]{bk} for $n=0$ and $n=1$ together, we get a
bimodule resolution of $\Lambda$ as follows
\begin{equation}\label{eq:Lambda}
\xymatrix{
\Lambda\otimes\displaystyle  \frac{I}{I{\mathfrak m}+{\mathfrak m}I}\otimes \Lambda\ar[rr]^{d_2} & &
\Lambda\otimes A \otimes \Lambda\ar[r]^{d_1}  & \Lambda\otimes\Lambda\ar[r]^{d_0} & \Lambda\ar[r] & 0.}
\end{equation}
Here the tensor products are over $R$, and we have identified $A$ with ${\mathfrak m}/{\mathfrak m}^2$. Note that $I/(I{\mathfrak m}+{\mathfrak m}I)$ is spanned by all partial derivatives
of the potential. The differential $d_2$ and $d_1$ are given after (1.3) in \cite{bk}.
Define
$$
\mu:R\langle A\rangle\to R\langle A\rangle\otimes A\otimes R\langle A\rangle
$$
by $\mu(a_1a_2\cdots a_s)=\sum_{i=1}^s a_1\cdots a_{i-1}\otimes a_i\otimes a_{i+1}\cdots a_s$
(this map is denoted by $\Delta$ in \cite{bk}).
Then $d_2$ sends the residue class of an element $1\otimes u\otimes 1$ to $\Delta(u)$.
The partial derivative $\partial_\xi$ was defined for $\xi\in A^\star$ in \cite[(3.1)]{dwz}.
By identifying $A$ with $A^\star$ using a basis of arrows, we have defined $\partial_b$
for an arrow $b\in A$.
We have a surjection
\begin{equation}\label{eq:surj}
A^\star \to \frac{I}{I{\mathfrak m}+{\mathfrak m}I}
\end{equation}
defined by $\xi\mapsto \partial_\xi S+I{\mathfrak m}+{\mathfrak m}I$.
We can replace the module on the left in (\ref{eq:Lambda}) by
$\Lambda\otimes A^\star\otimes \Lambda$ using (\ref{eq:surj}).
Therefore, we have an exact sequence:
\begin{equation}\label{eq:Lambda2}
\xymatrix{
\Lambda\otimes A^\star \otimes \Lambda\ar[r]&
\Lambda\otimes A \otimes \Lambda\ar[r]  & \Lambda\otimes\Lambda\ar[r] & \Lambda\ar[r] & 0.}
\end{equation}
If we apply the functor $\bullet \otimes_{\Lambda}M$ to (\ref{eq:Lambda2}), we obtain (\ref{eq:canpres}).
Note that the sequence remains exact after applying the functor, because (\ref{eq:Lambda2})
 splits as a sequence of right $\Lambda$-modules.

%The surjectivity of ${\rm ev}$ is obvious, as well as the equality
%${\rm ev} \circ \varphi = 0$.

%To prove that $\varphi \circ \psi = 0$, it is enough to show that
%$\varphi (\psi(e_{t(b)} \otimes m)) = 0$ for any $m \in M(h(b))$.
%By \eqref{eq:varphi-first-form} and \eqref{eq:psi-ab}, we have
%$$\varphi (\psi(e_{t(b)} \otimes m)) = \sum_{a, \nu}
%(u_{b,a}^{(\nu)}a \otimes (v_{a,b}^{(\nu)})_M \cdot m
%- u_{b,a}^{(\nu)} \otimes (a v_{a,b}^{(\nu)})_M \cdot m).$$
%Recalling \eqref{eq:Delta-b-cycle}, we see that the contribution from
%each path $a_1 \cdots a_d$ occurring in $\partial_b S$ is equal to
%\begin{align*}
%\sum_{p=1}^d (a_{1}\cdots a_{p} \otimes (a_{p+1}\cdots a_{d})_M
%\cdot m - a_{1}\cdots a_{p-1} \otimes (a_{p}\cdots a_{d})_M \cdot m)&\\
%= a_{1}\cdots a_{d} \otimes m - e_{t(b)} \otimes (a_{1}\cdots a_{d})_M \cdot
%m \ .&
%\end{align*}
%We conclude that
%$$\varphi (\psi(e_{t(b)} \otimes m)) = \partial_b S \otimes m -
%e_{t(b)} \otimes  (\partial_b S)_M \cdot m = 0,$$
%by the definition of the Jacobian algebra.

%The exactness condition $\ker ({\rm ev}) = {\rm im} (\varphi)$
%is essentially proved in \cite{bk}.
%The remaining condition $\ker (\varphi) = {\rm im} (\psi)$
 %will be deduced from a more general statement in the Appendix.
\end{proof}

\begin{corollary}
\label{cor:Psi-Phi-0}
The maps $\Phi$ and $\Psi$ given by \eqref{eq:Phi} and \eqref{eq:Psi-ba}
satisfy the condition $\Psi \circ \Phi = 0$.
\end{corollary}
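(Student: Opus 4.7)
The plan is to identify $\Phi$ and $\Psi$, up to an overall sign, with the maps induced on $\Hom_{\Pcal(Q,S)}(-,M)$ by the two leftmost differentials in the exact sequence \eqref{eq:canpres} of Proposition~\ref{pr:extended-canonical-presentation}. The corollary then drops out because any complex (a fortiori any exact sequence) is sent by a contravariant additive functor to a complex, so $\varphi\circ\psi=0$ forces $\psi^{\ast}\circ\varphi^{\ast}=0$.

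The key elementary fact is the Yoneda-type identification $\Hom_{\Pcal(Q,S)}(P_k\otimes V,M)\cong \Hom_{\C}(V,M(k))$ for any vector space $V$, which follows from $\Hom_{\Pcal(Q,S)}(P_k,M)=e_kM=M(k)$. Applying $\Hom_{\Pcal(Q,S)}(-,M)$ to the tail $\bigoplus_b P_{t(b)}\otimes M(h(b))\xrightarrow{\psi}\bigoplus_a P_{h(a)}\otimes M(t(a))\xrightarrow{\varphi}\bigoplus_k P_k\otimes M(k)$ of \eqref{eq:canpres} and using this identification in each slot produces the complex
\begin{equation*}
\bigoplus_{k}\Hom_{\C}(M(k),M(k)) \xrightarrow{\ \varphi^{\ast}\ } \bigoplus_{a}\Hom_{\C}(M(t(a)),M(h(a))) \xrightarrow{\ \psi^{\ast}\ } \bigoplus_{b}\Hom_{\C}(M(h(b)),M(t(b))),
\end{equation*}
whose leftmost and middle spaces are canonically $\Hom_R(M,M)$ and $U$, respectively.

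The next step is a direct computation from formulas \eqref{eq:varphi-first-form} and \eqref{eq:psi-ab}. Given $\xi=(\xi_k)\in\Hom_R(M,M)$, the induced $\Pcal(Q,S)$-linear map $\Xi:\bigoplus_kP_k\otimes M(k)\to M$ sends $p\otimes m$ to $p\cdot\xi_k(m)$; precomposing with $\varphi$ and using that $pa$ acts on $M$ via $p\cdot a_M$ yields, on the $a$-component, the linear map $a_M\xi_{t(a)}-\xi_{h(a)}a_M$. This is the negative of the $a$-component of $\Phi(\xi)$, so $\varphi^{\ast}=-\Phi$. Analogously, for $\eta=(\eta_a)\in U$, pulling back the associated map $H:\bigoplus_aP_{h(a)}\otimes M(t(a))\to M$ along $\psi$ gives, on the $b$-component of $H\circ\psi_{a,b}$, the sum $\sum_\nu (u_{b,a}^{(\nu)})_M\circ\eta_a\circ(v_{a,b}^{(\nu)})_M$, which is exactly $\Psi_{b,a}(\eta_a)$ in \eqref{eq:Psi-ba}. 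Hence $\psi^{\ast}=\Psi$.

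With both identifications in place, the conclusion is immediate: the composition $\varphi\circ\psi$ vanishes (it is implicit in \eqref{eq:canpres} being a well-defined complex, and certainly a consequence of exactness), so $\Psi\circ\Phi=\psi^{\ast}\circ(-\varphi^{\ast})=-(\varphi\circ\psi)^{\ast}=0$. The only nonroutine step is the bookkeeping in the previous paragraph, where one must carefully track the head/tail conventions, the direction of the arrows $a$ and $b$, and the tensor identifications so that the formulas \eqref{eq:Phi} and \eqref{eq:Psi-ba} really match the pullbacks of \eqref{eq:varphi-first-form} and \eqref{eq:psi-ab}; but once those conventions are pinned down the verification is mechanical.
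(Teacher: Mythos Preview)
Your proof is correct and follows essentially the same approach as the paper: both identify $\Phi$ and $\Psi$ with the maps induced by $\varphi$ and $\psi$ under the contravariant functor $\Hom_{\Pcal(Q,S)}(-,M)$ via the natural isomorphism $\Hom_{\Pcal(Q,S)}(P_k\otimes U,M)\cong\Hom_\C(U,M(k))$, and then conclude from $\varphi\circ\psi=0$. Your version is slightly more explicit in tracking the sign (finding $\varphi^*=-\Phi$ rather than $\varphi^*=\Phi$), which the paper absorbs into the phrase ``an easy check''; this extra care is harmless since the conclusion $\Psi\circ\Phi=0$ is insensitive to the sign.
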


\begin{proof}
Note that, for every $\Pcal(Q,S)$-module $M$, a vector
space $U$, and a vertex $k \in Q_0$, there is a natural isomorphism
\begin{equation}
\label{eq:hom-P-M}
\Hom_\C (U, M(k)) \to
\Hom_{\Pcal(Q,S)}(P_k \otimes U, M)
\end{equation}
sending $\sigma \in \Hom_\C (U, M(k))$ to the composed morphism
$$P_k \otimes U \buildrel{{\rm id} \otimes \sigma}\over\longrightarrow
P_k\otimes M(k) \buildrel{\rm ev}\over\rightarrow M \ .$$
An easy check shows that the maps $\Phi$ and $\Psi$ are obtained
from the maps $\varphi$ and $\psi$ given by \eqref{eq:varphi-first-form}
and \eqref{eq:psi-ab} by applying the contravariant functor
$\Hom_{\Pcal(Q,S)}(-, M)$ and using the isomorphism in
\eqref{eq:hom-P-M}.
So the statement in question follows from the exactness of (\ref{eq:canpres}). \end{proof}
\begin{remark}\label{rem:stilltrue}
Corollary~\ref{cor:Psi-Phi-0} is still true without the assumption~(\ref{eq:jacobian-alg-fin-dimensional}). In the more general case, the modules $P_i$ may be infinite dimensional, but the composition $\varphi\circ\psi$ is still equal to 0 in (\ref{eq:canpres}).
\end{remark}

The sequence \eqref{eq:canpres} produces a \emph{presentation} of
the $\Pcal(Q,S)$-module~$M$, which can be rewritten as
\begin{equation}
\label{eq:2canpres}
\bigoplus_{k\in Q_0} (P_{k}\otimes M_{\rm in}(k)) \buildrel{\varphi}\over\rightarrow
\bigoplus_{k\in Q_0} (P_k\otimes M(k)) \buildrel{\rm ev}\over\rightarrow M\rightarrow 0,
\end{equation}
where with some abuse of notation we use the same symbol $\varphi$
for the leftmost map: this map is now given by
\begin{equation}
\label{eq:canonical-map}
\varphi(p\otimes m)= \sum_{h(a) = k} (pa\otimes {\rm pr}_{t(a)} m) - p\otimes \alpha_{k}(m)\quad
(p\in P_k, \,m\in M_{\rm in}(k))
\end{equation}
(here ${\rm pr}_{t(a)}$ stands for the projection
$M_{\rm in}(k) = \bigoplus_{h(a) = k} M(t(a)) \to M(t(a))$).

We claim that the presentation \eqref{eq:2canpres} can be truncated as follows.
For every $k \in Q_0$, choose subspaces $U'_k, U''_k \subseteq M_{\rm in}(k)$
and $M^{(0)}(k) \subseteq M(k)$ such that
\begin{equation}
\label{eq:complements}
\ker(\alpha_k) = {\rm im}(\gamma_k)  \oplus U'_k, \,\,
M_{\rm in}(k) = \ker(\alpha_k) \oplus U''_k, \,\,
M(k) = M^{(0)}(k) \oplus {\rm im}(\alpha_k),
\end{equation}
and consider the projective $\Pcal(Q,S)$-modules
%$P^{(0)} = P^{(0)}(M)$ and $P^{(1)} = P^{(1)}(M)$ given by
\begin{align}
\label{eq:P0-P1}
P' &= \bigoplus_{k\in Q_0} (P_k\otimes \ker(\alpha_{k})),
\quad P'' = \bigoplus_{k\in Q_0} (P_k\otimes U''_k),\\
\nonumber
P^{(1)} &= \bigoplus_{k\in Q_0} (P_k\otimes U'_k),
\quad P^{(0)} = \bigoplus_{k\in Q_0} (P_k\otimes M^{(0)}(k)).
%{\rm coker}(\alpha_{k;M})
%\frac{\ker(\alpha_{k;M})}{{\rm im}(\gamma_{k;M})}
\end{align}

\begin{proposition}\

\label{pr:min-presentation}
\begin{enumerate}
\item For every $p' \in P'$, there exists a unique $p'' \in P''$ such
that $\varphi(p' - p'') \in P^{(0)}$.
The map $\overline \varphi: P' \to P^{(0)}$ given by $\overline \varphi(p') = \varphi(p' - p'')$
is a $\Pcal(Q,S)$-module homomorphism.
\item The restrictions of $\overline \varphi$ to $P^{(1)}$ and of ${\rm ev}$ to $P^{(0)}$
make the sequence
\begin{equation}
\label{eq:min-presentation}
P^{(1)} \buildrel{\overline \varphi}\over\to P^{(0)}  \buildrel{\rm ev}\over\to M \to 0
\end{equation}
exact, thus giving a presentation of~$M$.
\item The presentation \eqref{eq:min-presentation} is minimal,
that is, the map $\overline \varphi: P^{(1)} \to P^{(0)}$ induces
an isomorphism
$P^{(1)}/{\mathfrak m} P^{(1)} \to
{\rm im}(\overline \varphi)/{\mathfrak m}\, {\rm im}(\overline \varphi)$,
where ${\mathfrak m}$ is the maximal ideal in $\Pcal(Q,S)$.
\end{enumerate}
\end{proposition}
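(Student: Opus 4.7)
My plan is to exploit the block structure of $\varphi$ relative to the decompositions
$$\bigoplus_k P_k \otimes M_{\rm in}(k) = P_\gamma \oplus P^{(1)} \oplus P'', \qquad
\bigoplus_k P_k \otimes M(k) = P^{(0)} \oplus Q,$$
where $P_\gamma = \bigoplus_k P_k \otimes {\rm im}(\gamma_k)$ and $Q = \bigoplus_k P_k \otimes {\rm im}(\alpha_k)$. Reading off \eqref{eq:canonical-map}, the restriction of $\varphi$ to $P' = P_\gamma \oplus P^{(1)}$ has image contained in $\mathfrak{m}$ times the target (since the term $-p \otimes \alpha_k(m)$ vanishes on $\ker\alpha_k$ and the term $\sum pa \otimes \cdot$ always lies in $\mathfrak{m}$), while the block $\varphi_{P'' \to Q}$ reduces modulo $\mathfrak{m}$ to the isomorphism $U''_k \xrightarrow{-\alpha_k} {\rm im}(\alpha_k)$. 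By Nakayama, $\varphi_{P'' \to Q}$ is itself an isomorphism. Part~(1) follows at once: for $p' \in P'$, the element $p'' := (\varphi_{P'' \to Q})^{-1}\varphi_{P' \to Q}(p') \in P''$ is the unique choice for which $\varphi(p' - p'')$ has vanishing $Q$-component, and $\Pcal(Q,S)$-linearity of the resulting $\overline\varphi$ is inherited from that of the blocks.

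For part~(2), ${\rm ev}: P^{(0)} \to M$ is surjective by Nakayama (the reduction modulo $\mathfrak{m}$ is the identification $\bigoplus_k M^{(0)}(k) \cong M/\mathfrak{m}M$), and ${\rm ev}\circ\overline\varphi = 0$ is inherited from \eqref{eq:canpres}. For the converse inclusion, if $q \in P^{(0)}$ lies in $\ker({\rm ev})$, then by \eqref{eq:canpres} $q = \varphi(p' + p'')$ for some $p' \in P'$, $p'' \in P''$; the vanishing of the $Q$-component of $q$ forces $p'' = -(\varphi_{P'' \to Q})^{-1}\varphi_{P' \to Q}(p')$, whence $q = \overline\varphi(p')$. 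Exactness at $P^{(0)}$ therefore reduces to the inclusion $\overline\varphi(P_\gamma) \subseteq \overline\varphi(P^{(1)})$. The tool for this is the relation $\varphi\circ\psi = 0$ combined with the calculation (via \eqref{eq:Delta-b-partial-a-constant-term}) that for $n \in M_{\rm out}(k)$,
$$\psi(e_k \otimes n) = e_k \otimes \gamma_k(n) + r_{P_\gamma} + r_{P^{(1)}} + r_{P''}, \qquad r_\bullet \in \mathfrak{m}P_\bullet.$$
A direct manipulation using $\varphi(\psi(e_k \otimes n)) = 0$ and the definition of $\overline\varphi$ yields the identity $\overline\varphi(e_k \otimes \gamma_k(n) + r_{P_\gamma}) = -\overline\varphi(r_{P^{(1)}})$, which gives $\overline\varphi(e_k \otimes \gamma_k(n)) \in \mathfrak{m}\overline\varphi(P^{(1)}) + \mathfrak{m}\overline\varphi(P_\gamma)$. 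Extending by $\Pcal(Q,S)$-linearity and iterating (using that $\mathfrak{m}$ is nilpotent by \eqref{eq:jacobian-alg-fin-dimensional}) concludes part~(2).

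Part~(3) uses the same decomposition of $\psi$: if $p^{(1)} \in P^{(1)}$ satisfies $\overline\varphi(p^{(1)}) = 0$, then $p^{(1)} - p''(p^{(1)}) \in \ker\varphi = {\rm im}\,\psi$ equals $\psi(q)$ for some $q$, and the expression for $\psi(e_k \otimes n)$ above places the $P^{(1)}$-component of $\psi(q)$ inside $\mathfrak{m}P^{(1)}$; matching components against $p^{(1)}$ (which lives purely in $P^{(1)}$) and $p''(p^{(1)}) \in \mathfrak{m}P''$ yields $p^{(1)} \in \mathfrak{m}P^{(1)}$. Hence $\ker(\overline\varphi|_{P^{(1)}}) \subseteq \mathfrak{m}P^{(1)}$, which forces the (already surjective) induced map $P^{(1)}/\mathfrak{m}P^{(1)} \to {\rm im}(\overline\varphi)/\mathfrak{m}\,{\rm im}(\overline\varphi)$ to be an isomorphism. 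The main obstacle I anticipate is the iterative Nakayama step in part~(2); its success depends on the structural observation that the ``leading term'' of $\psi$ lies entirely inside $P_\gamma$, which is the conceptual reason the proposed truncated presentation is the correct minimal one.
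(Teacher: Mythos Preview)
Your proposal is correct and follows essentially the same approach as the paper: both arguments hinge on the block structure of $\varphi$ relative to the splittings \eqref{eq:complements}, Nakayama-type surjectivity, the identity $\varphi\circ\psi=0$ together with the observation that the leading term of $\psi$ lands in $P_\gamma$, and the exactness $\ker\varphi=\operatorname{im}\psi$ from Proposition~\ref{pr:extended-canonical-presentation}. The only packaging difference is that you prove the single block $\varphi_{P''\to Q}$ is an isomorphism (which cleanly yields existence, uniqueness, and $\Pcal(Q,S)$-linearity of $\overline\varphi$ at once), whereas the paper instead shows $\varphi(P'')$ is a direct complement of $P^{(0)}$ via the three separate claims \eqref{eq:K'+phi-P''}--\eqref{eq:phi-P''-injective}; your route is slightly more streamlined but the content is identical.
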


Before proving Proposition~\ref{pr:min-presentation}, we
use it to construct the space $\E^\proj(\M,\N)$
(for any QP-representations $\M =(M, V)$ and $\N = (N, W)$ of $(Q,S)$)
satisfying \eqref{eq:dim-E-star}.
Note that the $\Pcal(Q,S)$-module homomorphism $\overline \varphi: P^{(1)} \to P^{(0)}$ in
Proposition~\ref{pr:min-presentation} induces a $\C$-linear map
$$\overline \varphi^\star: \Hom_{\Pcal(Q,S)} (P^{(0)}, N)\rightarrow
\Hom_{\Pcal(Q,S)} (P^{(1)}, N).$$
We now define the space $\E^\proj(M,N)$ as the cokernel of
$\overline \varphi^\star$, that is, from an exact sequence
\begin{equation}
\label{eq:exact-seq-E-star}
\Hom_{\Pcal(Q,S)} (P^{(0)}, N) \buildrel{\overline \varphi^\star}\over\rightarrow
\Hom_{\Pcal(Q,S)} (P^{(1)}, N) \rightarrow \E^\proj(M,N)
\rightarrow 0 \ .
\end{equation}
And finally we set
\begin{equation}
\label{eq:full-E-star}
\E^\proj(\M,\N) = \E^\proj(M,N) \oplus \Hom_R(V,N)\ .
\end{equation}

\begin{theorem}
\label{th:E-star-as-dimension}
The space $\E^\proj(\M,\N)$ satisfies
\eqref{eq:dim-E-star}, i.e., its dimension is given by \eqref{eq:E-star}.
\end{theorem}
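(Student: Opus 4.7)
The plan is to apply the contravariant functor $\Hom_{\Pcal(Q,S)}(-,N)$ to the minimal projective presentation \eqref{eq:min-presentation} provided by Proposition~\ref{pr:min-presentation}(2), assemble a four-term exact sequence, and then carry out an Euler characteristic count, comparing the result with the formula \eqref{eq:E-star}.

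First, since $P^{(1)} \buildrel{\overline \varphi}\over\to P^{(0)} \to M \to 0$ is exact, the contravariant left-exact functor $\Hom_{\Pcal(Q,S)}(-,N)$ yields exactness of
$$0 \to \Hom_{\Pcal(Q,S)}(M,N) \to \Hom_{\Pcal(Q,S)}(P^{(0)},N) \buildrel{\overline\varphi^\star}\over\to \Hom_{\Pcal(Q,S)}(P^{(1)},N).$$
Splicing this with the defining sequence \eqref{eq:exact-seq-E-star} gives a four-term exact sequence whose alternating dimension sum vanishes, so
$$\dim \E^\proj(M,N) = \langle M,N \rangle + \dim \Hom_{\Pcal(Q,S)}(P^{(1)},N) - \dim \Hom_{\Pcal(Q,S)}(P^{(0)},N).$$

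Next, I would use the standard adjunction $\Hom_{\Pcal(Q,S)}(P_k \otimes U, N) \cong \Hom_\C(U, N(k))$, together with the choices of complements in \eqref{eq:complements} and the definitions \eqref{eq:P0-P1}, to evaluate
\begin{align*}
\dim \Hom_{\Pcal(Q,S)}(P^{(0)},N) &= \sum_{k \in Q_0} (d_k(M) - \rank \alpha_k)\, d_k(N), \\
\dim \Hom_{\Pcal(Q,S)}(P^{(1)},N) &= \sum_{k \in Q_0} (\dim \ker \alpha_k - \rank \gamma_k)\, d_k(N).
\end{align*}
Taking the difference and invoking the rank-nullity identity $\dim \ker \alpha_k + \rank \alpha_k = \dim M_{\rm in}(k)$, I obtain
$$\dim \E^\proj(M,N) = \langle M,N \rangle + \sum_{k \in Q_0} \bigl( \dim M_{\rm in}(k) - \rank \gamma_k - d_k(M) \bigr)\, d_k(N).$$

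Finally, by \eqref{eq:full-E-star}, adding the contribution $\dim \Hom_R(V,N) = \sum_k d_k^-(\M)\, d_k(N)$ yields
$$\dim \E^\proj(\M,\N) = \langle M,N \rangle + \sum_{k \in Q_0} \bigl( \dim M_{\rm in}(k) - \rank \gamma_k - d_k(M) + d_k^-(\M) \bigr)\, d_k(N).$$
To match this with $E^\proj(\M,\N)$, I would apply \eqref{eq:gi-thru-rank-gamma} to $\M^\star$, noting that passing to the opposite QP swaps $M_{\rm in}(k)$ and $M_{\rm out}(k)$, and dualizes $\gamma_k$ (preserving its rank), while leaving $d_k(M)$ and $d_k^-(\M)$ unchanged; this gives
$$g_k(\M^\star) = \dim M_{\rm in}(k) - \rank \gamma_k - d_k(M) + d_k^-(\M),$$
so the coefficient of each $d_k(N)$ matches \eqref{eq:E-star}, completing the proof. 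The calculation is almost entirely bookkeeping; the only point requiring real care is the identification of $g_k(\M^\star)$ via the duality, but this is routine and is already implicit in the proof of Proposition~\ref{pr:E-invariant-star}.
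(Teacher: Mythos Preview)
Your proof is correct and follows essentially the same approach as the paper: both apply $\Hom_{\Pcal(Q,S)}(-,N)$ to the minimal presentation \eqref{eq:min-presentation} to obtain the four-term exact sequence \eqref{eq:longer-exact-sequence-for-E-star}, compute the dimensions of the Hom-spaces via the isomorphism \eqref{eq:hom-P-M} and the definitions \eqref{eq:complements}--\eqref{eq:P0-P1}, simplify using rank--nullity, add the $\Hom_R(V,N)$ contribution, and then identify the resulting coefficient of $d_k(N)$ with $g_k(\M^\star)$ via \eqref{eq:gi-thru-rank-gamma} applied to the dual.
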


\begin{proof}
Using the presentation \eqref{eq:min-presentation}, we include
\eqref{eq:exact-seq-E-star} into a longer exact sequence
\begin{align}
\label{eq:longer-exact-sequence-for-E-star}
0 &\to \Hom_{\Pcal(Q,S)} (M, N) \to
\Hom_{\Pcal(Q,S)} (P^{(0)}(M), N)\\
\nonumber
 &%\buildrel{\varphi^\star}\over
 \rightarrow
\Hom_{\Pcal(Q,S)} (P^{(1)}(M), N) \rightarrow \E^\proj(M,N)
\rightarrow 0 \ .
\end{align}

Computing the dimensions of the terms in
\eqref{eq:longer-exact-sequence-for-E-star}, we get
\begin{align*}
&\dim \E^\proj(M,N) =\\
& \langle M, N \rangle - \dim \Hom_{\Pcal(Q,S)} (P^{(0)}(M), N)
+ \dim \Hom_{\Pcal(Q,S)} (P^{(1)}(M), N) =\\
&\langle M, N \rangle - \sum_{k \in Q_0} \dim {\rm
coker}(\alpha_{k;M})\cdot d_k(N) +
\sum_{k \in Q_0} \dim \frac{\ker(\alpha_{k;M})}{{\rm im}(\gamma_{k;M})} \cdot d_k(N) = \\
& \langle M, N \rangle + \sum_{k \in Q_0} (\dim M_{\rm in}(k) -
d_k(M) + {\rm rk}(\gamma_{k;M})) \cdot d_k(N)= \\
& \langle M, N \rangle + \sum_{k \in Q_0} (g_k(\M^\star) -
d_k^-(\M)) \cdot d_k(N)
\end{align*}
(for the last equality see \eqref{eq:gi-thru-rank-gamma});
note that in view of \eqref{eq:hom-P-M}, $\Hom_{\Pcal(Q,S)}(P_k, N)$ is naturally
isomorphic to $N(k)$, hence $\dim \Hom_{\Pcal(Q,S)}(P_k, N) = d_k(N)$.

To finish the proof of \eqref{eq:dim-E-star}, it remains to note
that
$$\Hom_R(V,N) = \bigoplus_{k \in Q_0} \Hom_\C(V(k),N(k)),$$
implying
$$\dim \Hom_R(V,N) = \sum_{k \in Q_0} d_k^-(\M) d_k(N).$$
Thus,
\begin{align*}
&\dim \E^\proj(\M,\N) = \dim \E^\proj(M,N) + \sum_{k \in Q_0} d_k^-(\M) d_k(N)\\
&= \langle M, N \rangle + \sum_{k \in Q_0} g_k(\M^\star) d_k(N)
= E^\proj(\M,\N)
\end{align*}
by \eqref{eq:E-star}.
\end{proof}

\begin{proof}[Proof of Proposition~\ref{pr:min-presentation}]
We start by showing that the map ${\rm ev}: P^{(0)} \to M$ is surjective.
This is a special case of the following lemma.

\begin{lemma}
\label{lem:surjectivity}
Suppose $\eta: K \to L$ is a surjection of finite-dimensional $\Pcal(Q,S)$-modules.
Suppose that $K = K' \oplus K''$ is the direct sum of two submodules,  and
that $\eta(K'')  \subseteq {\mathfrak m} L$. Then $\eta(K') = L$.
\end{lemma}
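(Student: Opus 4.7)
The plan is to prove Lemma~\ref{lem:surjectivity} by a standard Nakayama-type argument, using the fact that under the standing assumption \eqref{eq:jacobian-alg-fin-dimensional} the maximal ideal $\mathfrak{m} \subset \Pcal(Q,S)$ is nilpotent.

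First, since $\eta$ is surjective and $K = K' \oplus K''$, every element of $L$ is of the form $\eta(k') + \eta(k'')$ with $k' \in K'$, $k'' \in K''$. Combining this with the hypothesis $\eta(K'') \subseteq \mathfrak{m} L$, I get the inclusion
\begin{equation*}
L = \eta(K') + \eta(K'') \subseteq \eta(K') + \mathfrak{m} L \subseteq L,
\end{equation*}
so $L = \eta(K') + \mathfrak{m} L$.

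Next, I iterate: multiplying this equation on the left by $\mathfrak{m}$ gives $\mathfrak{m} L = \mathfrak{m}\, \eta(K') + \mathfrak{m}^2 L \subseteq \eta(K') + \mathfrak{m}^2 L$ (because $\eta(K')$ is a submodule), and substituting back yields $L = \eta(K') + \mathfrak{m}^2 L$. Repeating this $N$ times gives $L = \eta(K') + \mathfrak{m}^N L$ for every $N \geq 1$.

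Finally, by assumption \eqref{eq:jacobian-alg-fin-dimensional}, the ideal $\mathfrak{m}$ of $\Pcal(Q,S)$ satisfies $\mathfrak{m}^N = 0$ for some $N$ (since $\mathfrak{m}^N \subseteq J_0$ modulo $J_0$ is $0$ in the Jacobian algebra; equivalently, $\Pcal(Q,S)$ is a finite-dimensional algebra whose maximal ideal is its Jacobson radical, hence nilpotent). Therefore $\mathfrak{m}^N L = 0$, and we conclude $L = \eta(K')$. There is no real obstacle here; the only point worth verifying carefully is that $\mathfrak{m}$ is indeed nilpotent in $\Pcal(Q,S)$, which is a direct consequence of the hypothesis that $\mathfrak{m}^N \subseteq J_0$ for $N$ large.
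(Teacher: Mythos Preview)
Your proof is correct and uses essentially the same Nakayama-type argument as the paper: both exploit the nilpotency of $\mathfrak{m}$ in $\Pcal(Q,S)$ (guaranteed by \eqref{eq:jacobian-alg-fin-dimensional}) to pass from $L = \eta(K') + \mathfrak{m}L$ to $L = \eta(K')$. The paper takes a slightly more circuitous route, first choosing a complement $L^{(0)}$ to $\mathfrak{m}L$ in $L$, showing $L^{(0)}$ generates $L$, and then proving $L^{(0)}\subseteq \eta(K')$; your version cuts straight to the iteration $L = \eta(K') + \mathfrak{m}^N L$, using that $\eta(K')$ is a submodule, which is cleaner.
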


\begin{proof}
Choose the direct complement $L^{(0)}$ to ${\mathfrak m} L$ in $L$.
Then  $L^{(0)}$ generates $L$ as a $\Pcal(Q,S)$-module.
Indeed, we have
$$L = {\mathfrak m} L + L^{(0)} = {\mathfrak m}^2 L + {\mathfrak m}  L^{(0)} +  L^{(0)}
= \cdots = {\mathfrak m}^{N+1} L + {\mathfrak m}^N  L^{(0)}  + \cdots + L^{(0)}$$
for each $N \geq 0$; choosing $N$ big enough so that ${\mathfrak m}^{N+1} L = \{0\}$,
we see that $L = \Pcal(Q,S)  L^{(0)}$.
This argument also shows that ${\mathfrak m} L = {\mathfrak m} L^{(0)}$.

Since $\eta$ is a homomorphism of $\Pcal(Q,S)$-modules, to prove
that $\eta(K') = L$, it suffices to show that
$L^{(0)} \subseteq \eta(K')$.
Using the surjectivity of $\eta: K \to L$ and the inclusion
$\eta(K'')  \subseteq {\mathfrak m} L^{(0)}$, we get
\begin{align*}
L^{(0)} &\subseteq \eta(K') + {\mathfrak m} L^{(0)} \subseteq
\eta(K') + {\mathfrak m} \eta(K') + {\mathfrak m}^2  L^{(0)} \subseteq \cdots\\
&\subseteq \eta(K') + {\mathfrak m} \eta(K')
+  \cdots + {\mathfrak m}^N \eta(K') +  {\mathfrak m}^{N+1} L^{(0)}
= \eta(K') + {\mathfrak m}^{N+1} L^{(0)}
\end{align*}
for each $N \geq 0$, implying as above that $L^{(0)} \subseteq \eta(K')$.
\end{proof}

Now the fact that ${\rm ev}(P^{(0)}) = M$ follows by
applying Lemma~\ref{lem:surjectivity} to the map
${\rm ev}: \bigoplus_{k\in Q_0} (P_k\otimes M(k)) \to M$
in place of $\eta: K \to L$, and to the submodules $K'$ and $K''$
given by
\begin{equation}
\label{eq:K-spaces}
K' = P^{(0)}, \quad K'' =
\bigoplus_{k\in Q_0} (P_k\otimes {\rm im}(\alpha_k)).
\end{equation}

Continuing the proof of Proposition~\ref{pr:min-presentation}, we
adopt the notation in \eqref{eq:P0-P1} and \eqref{eq:K-spaces},
thus viewing $\varphi$ as a homomorphism of $\Pcal(Q,S)$-modules
$P' \oplus P'' \to K' \oplus K''$.
We write $\varphi$ as $\varphi^{(1)} + \varphi^{(0)}$ in
accordance with the decomposition in \eqref{eq:canonical-map}.
The following properties are immediate from
\eqref{eq:canonical-map}:
\begin{align}
\label{eq:phi0}
&  \vcenter{\hsize=5in \noindent
$\varphi^{(0)}|_{P'} = 0$, while the restriction of
$\varphi^{(0)}$ to $P''$ is an isomorphism between $P''$ and
$K''$;} \\
\label{eq:phi1}
&{\rm im}(\varphi^{(1)}) \subseteq {\mathfrak m}(K' \oplus K'').
\end{align}
We claim that these properties imply the following:
\begin{align}
\label{eq:K'+phi-P''}
&K'' \subseteq {\mathfrak m} K' + \varphi(P'');\\
\label{eq:K'-cap-phi-P''}
&K' \cap \varphi(P'') = \{0\};\\
\label{eq:phi-P''-injective}
&\text{the restriction of $\varphi$ to $P''$ is injective.}
\end{align}
Note that these facts imply Part (1) of
Proposition~\ref{pr:min-presentation}.
Indeed, by \eqref{eq:K'+phi-P''} and \eqref{eq:K'-cap-phi-P''}, we have
\begin{equation}
\label{eq:direct-sum-for-psi}
K' \oplus K'' = K' \oplus \varphi(P'').
\end{equation}
This allows us to define the map $\overline \varphi: P' \to K' = P^{(0)}$ as
the composition ${\rm pr}_1 \circ \varphi$, where ${\rm pr}_1$ is
the projection of $K' \oplus K''$ onto $K'$ along $\varphi(P'')$.
Using \eqref{eq:phi-P''-injective}, we see that $\overline \varphi$ is exactly
the map in Part (1) of Proposition~\ref{pr:min-presentation} (the
fact that $\overline \varphi$ is a $\Pcal(Q,S)$-module homomorphism is obvious since so are
$\varphi$ and ${\rm pr}_1$).

To prove \eqref{eq:K'+phi-P''}, we use \eqref{eq:phi0} and \eqref{eq:phi1} to get
\begin{align*}
K'' &= \varphi^{(0)}(P'') \subseteq \varphi^{(1)}(P'') + \varphi(P'')
\subseteq {\mathfrak m}K'  + \varphi(P'') + {\mathfrak m}K''\\
&\subseteq {\mathfrak m}K'  + \varphi(P'') +
{\mathfrak m}({\mathfrak m}K'  + \varphi(P'') + {\mathfrak m}K'')
\subseteq {\mathfrak m}K'  + \varphi(P'') + {\mathfrak m}^2 K''.
\end{align*}
Iterating, we see that
$K'' \subseteq {\mathfrak m}K'  + \varphi(P'') + {\mathfrak m}^N K''$
for all $N \geq 1$, implying the desired inclusion \eqref{eq:K'+phi-P''}.

To prove \eqref{eq:K'-cap-phi-P''} and \eqref{eq:phi-P''-injective}, suppose that $\varphi(p'') =
k'$ for some $k' \in K'$ and $p'' \in P''$.
Using \eqref{eq:phi0} and \eqref{eq:phi1}, we see that
$$k' - \varphi^{(0)}(p'') = \varphi^{(1)}(p'') \in {\mathfrak m}(K' \oplus K''),$$
implying that $k' \in {\mathfrak m}K'$ and $\varphi^{(0)}(p'') \in
{\mathfrak m}K''$.
Once again applying \eqref{eq:phi0}, we conclude that $p'' \in {\mathfrak m}P''$.
Iterating this argument, we conclude that  $k' \in {\mathfrak m}^N K'$ and
$p'' \in {\mathfrak m}^N P''$ for all $N \geq 1$,
hence $k' = p'' = 0$, implying both \eqref{eq:K'-cap-phi-P''}
and \eqref{eq:phi-P''-injective}.

Turning to the proof of Part 2 of
Proposition~\ref{pr:min-presentation}, we first show that the sequence
\begin{equation}
\label{eq:presentation-intermediate}
P' \buildrel{\overline \varphi}\over\to K'  \buildrel{\rm ev}\over\to M \to 0
\end{equation}
is exact.
The surjectivity of ${\rm ev}: K' \to M$ is already proved above,
so (using the exactness of \eqref{eq:2canpres}) it remains to show that
$\overline \varphi(P') = \varphi(P' \oplus P'') \cap K'$; but this is immediate
from the definition of~$\overline \varphi$.

To prove Part 2, it remains to show that the restriction of the map $\overline \varphi: P' \to K'$ to the submodule
$P^{(1)} \subseteq P'$ has the same image as~$\overline \varphi$.
Note that $P' = P^{(1)} \oplus P'_1$, where
$$P'_1 = \bigoplus_{k\in Q_0} (P_k\otimes {\rm im}(\gamma_k)).$$
By Lemma~\ref{lem:surjectivity}, it is enough to show that
\begin{equation}
\label{eq:psi-of-P'1-inclusion}
\overline \varphi(P'_1) \subseteq {\mathfrak m} \overline \varphi(P').
\end{equation}
It follows easily from \eqref{eq:direct-sum-for-psi} that
$${\mathfrak m} \overline \varphi(P') = {\mathfrak m}(\varphi(P' \oplus P'') \cap K')
=  {\mathfrak m} \varphi(P' \oplus P'') \cap K',$$
and also that
$$\varphi(P') = \varphi^{(1)}(P') \subseteq {\mathfrak m}(K' \oplus K'')
= {\mathfrak m} K' \oplus  {\mathfrak m} \varphi(P''),$$
implying the inclusion $\overline \varphi(P'_1) \subseteq \varphi(P'_1 \oplus  {\mathfrak m} P'')$.
We see that the desired inclusion
\eqref{eq:psi-of-P'1-inclusion} is a consequence of the following:
\begin{equation}
\label{eq:phi-of-P'1-inclusion}
\varphi(P'_1) \subseteq {\mathfrak m} \varphi(P' \oplus P'').
\end{equation}

To prove \eqref{eq:phi-of-P'1-inclusion}, it suffices to show that
$\varphi(e_k \otimes \gamma_k(m)) \in {\mathfrak m} \varphi(P' \oplus P'')$
for every $m \in M(h(b))$, where $b$ is an arrow with $t(b) = k$.
But this follows from the exactness of the sequence \eqref{eq:canpres}
(more precisely, from the fact that ${\rm im}(\psi) \subseteq
\ker(\varphi)$), since in view of \eqref{eq:Delta-b-partial-a-constant-term}
we have
\begin{equation}
\label{eq:psi-gamma-mod-m}
e_k \otimes \gamma_k(m) \equiv   \psi(e_k \otimes m)
\,\, {\rm mod} \,\, {\mathfrak m} (P' \oplus P'')  \ .
\end{equation}
This concludes the proof of Part 2 of
Proposition~\ref{pr:min-presentation}.

To prove Part 3, note that \eqref{eq:psi-gamma-mod-m} implies the inclusion
$$\ker(\varphi) = {\rm im}(\psi) \subseteq P'_1 +  {\mathfrak m} (P' \oplus P'').$$
Now suppose that $p \in P^{(1)}$ is such that
$\overline \varphi(p) \in \overline \varphi ({\mathfrak m} P^{(1)})$.
Remembering the definition of $\overline \varphi$, we conclude that
$$p \in {\mathfrak m} P^{(1)} + P'' + \ker(\varphi) \subseteq
{\mathfrak m} P^{(1)} \oplus P'_1 \oplus P''.$$
Therefore, $p \in {\mathfrak m} P^{(1)}$, finishing the proof of
Proposition~\ref{pr:min-presentation}.
\end{proof}

\begin{remark}
\label{rem:minimality}
The presentation \eqref{eq:min-presentation} is minimal by part (3) of Proposition~\ref{pr:min-presentation}.
Minimal presentations are unique up to isomorphism.
One can show that, up to an isomorphism, the presentation \eqref{eq:min-presentation}
does not depend on the choice of splitting subspaces in \eqref{eq:complements}.
\end{remark}

\begin{remark}
\label{rem:injectives}
To emphasize the dependence of indecomposable projective modules $P_k$ (for $k\in Q_0$)
on the underlying QP $(Q,S)$, we will denote them $P_k = P_k (Q, S)$.
The indecomposable injective $\Pcal(Q,S)$-modules $I_k = I_k (Q, S)$ can be defined by
going to the opposite QP:
\begin{equation}
\label{eq:injectives}
I_k (Q, S)= (P_k (Q^{\rm op}, S^{\rm op}))^\star.
\end{equation}
By this definition, there is a duality between projective and injective
$\Pcal(Q,S)$-modules: every exact sequence involving the modules $P_k$ gives
rise to the exact sequence (with the arrows reversed) involving the $I_k$.
In particular, the presentation \eqref{eq:min-presentation} gives
rise to a ``co-presentation"
$$0 \to M \to \bigoplus_{k\in Q_0} (I_k \otimes
\ker(\beta_{k})^\star) \to \bigoplus_{k\in Q_0} (I_k \otimes U_k^\star),$$
where $U_k$ is a direct complement of ${\rm im}(\beta_k)$ in $\ker(\gamma_k)$.
\end{remark}
Recall that $\E^\proj(M,N)$ is defined in
(\ref{eq:exact-seq-E-star}). We also define
$\E^\inj(M,N)=\E^\proj(N^\star,M^\star)$.

\begin{corollary}
\label{cor:ext-and-tau}
We have the following isomorphisms
\begin{eqnarray}
\E^\proj(M,N) &=& \Hom_{\Pcal(Q,S)}(N,\tau(M))^\star\\
\E^\inj(M,N) &=& \Hom_{\Pcal(Q,S)}(\tau^{-1}(N),M)^\star
\end{eqnarray}
where $\tau$ is the Auslander-Reiten translation functor (see e.g., \cite[Section~IV.2]{ass}).
\end{corollary}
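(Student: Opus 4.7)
My plan is to combine the minimal projective presentation from Proposition~\ref{pr:min-presentation} with the standard Auslander-Reiten machinery. Write $\Lambda = \Pcal(Q,S)$ and $D = \Hom_\C(-,\C)$. Starting from the presentation $P^{(1)} \buildrel{\overline \varphi}\over\to P^{(0)} \to M \to 0$ of Proposition~\ref{pr:min-presentation} (which is minimal by part~(3) there), I will define the transpose $\Tr M$ as the cokernel
$$\Hom_\Lambda(P^{(0)},\Lambda) \to \Hom_\Lambda(P^{(1)},\Lambda) \to \Tr M \to 0,$$
a finite-dimensional right $\Lambda$-module; then $\tau M = D\Tr M$ by the classical definition of the Auslander-Reiten translate. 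The minimality of the presentation is precisely what ensures that no spurious injective summands appear in $\tau M$.

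Next I will tensor the displayed right exact sequence with $N$ over $\Lambda$ on the right. Since each indecomposable projective $P_k$ is finitely generated, there is the natural isomorphism $\Hom_\Lambda(P_k,\Lambda) \otimes_\Lambda N \cong \Hom_\Lambda(P_k,N)$, both sides being canonically identified with $e_k N = N(k)$. The tensored sequence therefore reads
$$\Hom_\Lambda(P^{(0)},N) \to \Hom_\Lambda(P^{(1)},N) \to \Tr M \otimes_\Lambda N \to 0,$$
and a direct comparison with the defining exact sequence~\eqref{eq:exact-seq-E-star} of $\E^\proj(M,N)$ yields a natural isomorphism $\E^\proj(M,N) \cong \Tr M \otimes_\Lambda N$. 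Applying the tensor-Hom adjunction
$$D(\Tr M \otimes_\Lambda N) \cong \Hom_\Lambda(N, D\Tr M) = \Hom_\Lambda(N,\tau M)$$
and dualizing once more (harmless in finite dimension) gives the first formula.

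For the second formula I use the definition $\E^\inj(M,N) = \E^\proj(N^\star, M^\star)$ and apply the formula just proven inside the opposite QP. Writing $\Lambda^{\rm op} = \Pcal(Q^{\rm op}, S^{\rm op})$ and $\tau_{\rm op}$ for its Auslander-Reiten translate, this gives $\E^\inj(M,N) \cong \Hom_{\Lambda^{\rm op}}(M^\star, \tau_{\rm op}(N^\star))^\star$. The desired identity then follows from two standard compatibilities: the duality $\Hom_{\Lambda^{\rm op}}(X^\star, Y^\star) \cong \Hom_\Lambda(Y, X)$, together with $\tau_{\rm op}(N^\star) \cong (\tau^{-1}(N))^\star$, the latter being a direct consequence of dualizing the minimal injective copresentation of $N$ described in Remark~\ref{rem:injectives}.

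I anticipate no serious obstacle: the substantive work has already been carried out in Proposition~\ref{pr:min-presentation}, and the remainder is a routine chain of natural isomorphisms from the Auslander-Reiten toolbox. The only delicate point is to verify cleanly the compatibility $\tau_{\rm op}(N^\star) \cong (\tau^{-1}(N))^\star$ in our particular setting, which amounts to checking that the $D$-dual of a minimal injective copresentation of $N$ in $\Lambda$-mod is a minimal projective presentation of $N^\star$ in $\Lambda^{\rm op}$-mod; this is precisely what Remark~\ref{rem:injectives} and its dual provide.
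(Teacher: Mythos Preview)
Your proposal is correct and follows essentially the same approach as the paper: both derive the result from the minimal projective presentation of Proposition~\ref{pr:min-presentation} via standard Auslander--Reiten theory, and both handle the second isomorphism by passing to the opposite QP and invoking $\tau_{\rm op}(N^\star) \cong (\tau^{-1} N)^\star$. The only cosmetic difference is packaging: the paper works with the Nakayama functor $\nu$ and the exact sequence $0 \to \tau M \to \nu(P^{(1)}) \to \nu(P^{(0)})$, then applies $\Hom_\Lambda(N,-)^\star$ and the identity $\Hom_\Lambda(P,N) \cong \Hom_\Lambda(N,\nu(P))^\star$, whereas you go through the transpose and the tensor--Hom adjunction $D(\Tr M \otimes_\Lambda N) \cong \Hom_\Lambda(N, D\Tr M)$; unwinding $\nu(P) = D\Hom_\Lambda(P,\Lambda)$ shows these two chains of isomorphisms are literally the same.
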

\begin{proof}
For this proof we will rely on the book \cite{ass}. We should point out
that the authors of \cite{ass} use the convention that all modules
are right-modules unless stated otherwise, where we assume modules to be left modules
by default.
Let $\nu$ be the Nakayama functor (see \cite[Section III, Definition 2.8]{ass}) from $\Pcal(Q,S)$-modules to $\Pcal(Q,S)$-modules
defined by
$$
\nu(M)=\Hom_{\Pcal(Q,S)}(M,\Pcal(Q,S))^\star.
$$
This functor has the property that
$$
\nu(P_k)=I_k
$$
for every vertex $k$. In particular, we have an isomorphism
\begin{equation}\label{eq:isoPI}
\Hom_{\Pcal(Q,S)}(P,M)=\Hom_{\Pcal(Q,S)}(M,\nu(P))^\star
\end{equation}
for every projective module $P$ (see \cite[Lemma 2.1]{ass}).
Consider the minimal presentation \eqref{eq:min-presentation}.
It follows from \cite[Section IV, Proposition 2.4]{ass} that the sequence
\begin{equation}\label{eq:nuP}
0\to \tau(M)\to \nu(P^{(1)})\to \nu(P^{(0)})
\end{equation}
is exact.
If we apply $\Hom_{\Pcal(Q,S)}(N,\cdot)^\star$ to (\ref{eq:nuP}), then it follows from (\ref{eq:isoPI}) that
\begin{equation}
\Hom_{\Pcal(Q,S)}(P^{(0)},N) \to \Hom_{\Pcal(Q,S)}(P^{(1)},N)\to \Hom_{\Pcal(Q,S)}(N,\tau(M))^\star\to 0
\end{equation}
is exact.
It follows from (\ref{eq:longer-exact-sequence-for-E-star})
that $\E^\proj(M,N)=\Hom_{\Pcal(Q,S)}(N,\tau(M))^\star$.

We have $\tau(N^\star)=\tau^{-1}(N)^\star$.
So it follows that
\begin{multline*}
\E^\inj(M,N)=\E^\proj(N^\star,M^\star)=\Hom_{\Pcal(Q,S)^{\rm op}}(M^\star,\tau(N^\star))^\star=\\
=\Hom_{\Pcal(Q,S)^{\rm op}}(M^\star,\tau^{-1}(N)^\star)^\star=
\Hom_{\Pcal(Q,S)}(\tau^{-1}(N),M)^\star.
\end{multline*}
\end{proof}

\begin{remark}
Auslander-Reiten duality states that $\Ext^1_{\Pcal(Q,S)}(M,N)$ is isomorphic to
$\overline{\Hom}_{\Pcal(Q,S)}(M,N)^\star$, where $\overline{\Hom}_{\Pcal(Q,S)}(M,N)$
is equal to $\Hom_{\Pcal(Q,S)}(M,N)$  modulo the morphisms that factor through
injective modules (see \cite[IV.2, Theorem 2.13]{ass}).
We may view $\Ext^1_{\Pcal(Q,S)}(M,N)$ as a subspace of $\E^\proj(M,N)$.
If $M$ has projective dimension $\leq 1$ then we have equality by~\cite[IV.2, Corollary 2.14]{ass}.
Similarly, $\Ext^1_{\Pcal(Q,S)}(M,N)$ can be viewed as a subspace of $\E^\inj(M,N)$,
with equality when $N$ has injective dimension $\leq 1$.
\end{remark}

\newpage
\noindent\footnotesize Harm Derksen\\
{\sc Department of Mathematics, University of Michigan,
Ann Arbor, MI 48109, USA}\\
{\it E-mail address:} { \tt hderksen@umich.edu}\\[10pt]
Jerzy Weyman\\
{\sc Department of Mathematics, Northeastern University,
Boston, MA 02115, USA}\\
{\it E-mail address:} {\tt j.weyman@neu.edu}\\[10pt]
Andrei Zelevinsky\\
{\sc Department of Mathematics, Northeastern University,
Boston, MA 02115, USA}\\
{\it E-mail address:} {\tt andrei@neu.edu}

\end{document}